\newcommand{\M}{\mathsf{M}}
\newcommand{\R}{\mathcal{R}}
\newcommand{\Z}{\mathbb{Z}}
\newcommand{\n}{\mathbf{N}}
\newcommand{\s}{\mathbf{S}}
\newcommand{\e}{\mathbf{E}}
\newcommand{\w}{\mathbf{W}}
\renewcommand{\L}{\mathcal{L}}
\DeclareMathOperator{\id}{id}
\newcommand{\lsub}[2]{{}_{#1}#2}
\newcommand{\lsup}[2]{{}^{#1}\mskip-.6\thinmuskip#2}
\newcommand{\Alg}{\mathcal{A}}
\newcommand{\DT}{\boxtimes}
\newcommand{\y}{\mathbf{y}}
\theoremstyle{plain}
\numberwithin{equation}{section}
\newtheorem{theorem}[equation]{Theorem}
\newtheorem{lemma}[equation]{Lemma}
\newtheorem{corollary}[equation]{Corollary}
\newtheorem{conjecture}[equation]{Conjecture}
\newtheorem{convention}[equation]{Convention}
\newtheorem{definition}[equation]{Definition}
\theoremstyle{definition}
\theoremstyle{remark}
\newtheorem{remark}[equation]{Remark}
\definecolor{darkgreen}{rgb}{0,.25,0}
\definecolor{darkred}{rgb}{.25,0,0}
\definecolor{pink}{rgb}{1,.08,.575}
\providecommand\@dotsep{5}
\def\listtodoname{List of Todos}
\def\listoftodos{\@starttoc{tdo}\listtodoname}
\newcommand{\RN}[1]{%
  \textup{\uppercase\expandafter{\romannumeral#1}}%
}
\newcommand{\rom}[1]{\uppercase\expandafter{\romannumeral #1\relax}}
\newcommand{\A}{\mathcal{A}}
\newcommand{\cupplus}{{\setbox0\hbox{\large$\cup$}\rlap{\hbox to \wd0{\hss\raisebox{3pt}{\tiny$+$}\hss}}\box0}}
\newcommand{\cupminus}{{\setbox0\hbox{\large$\cup$}\rlap{\hbox to \wd0{\hss\raisebox{3pt}{\tiny$-$}\hss}}\box0}}
\begin{document}

\title[Relating tangle invariants for Kh and HFK]{Relating tangle invariants for Khovanov homology and knot Floer homology}

\author{Akram Alishahi}
\thanks{AA was supported by NSF Grants DMS-1505798 and DMS-1811210.}
\thanks{ND was supported by NSF Grant DMS-1606421}
\address{Department of Mathematics, Columbia University, New York, NY 10027}
\email{\href{mailto:alishahi@math.columbia.edu}{alishahi@math.columbia.edu}}

\author{Nathan Dowlin}
%\thanks{}
\address{Department of Mathematics, Columbia University, New York, NY 10027}
\email{\href{mailto:ndowlin@math.columbia.edu }{ndowlin@math.columbia.edu}}

\keywords{}

\date{\today}

\begin{abstract}

Ozsv\'{a}th and Szab\'{o} recently constructed an algebraically defined invariant of tangles which takes the form of a $DA$ bimodule. This invariant is expected to compute knot Floer homology. The authors have a similar construction for open braids and their plat closures which can be viewed as a filtered $DA$ bimodule over the same algebras. For a closed diagram, this invariant computes the Khovanov homology of the knot or link. We show that forgetting the filtration, our $DA$ bimodules are homotopy equivalent to a suitable version of the Ozsv\'{a}th-Szab\'{o} bimodules. In addition to giving a relationship between tangle invariants for Khovanov homology and knot Floer homology, this gives an oriented skein exact triangle for the Ozsv\'{a}th-Szab\'{o} bimodules which can be iterated to give an oriented cube of resolutions for the global construction. 

\end{abstract}

\maketitle

\tableofcontents

\section{Introduction}

Knot Floer homology and Khovanov homology are two powerful knot invariants which categorify the Alexander polynomial and the Jones polynomial, respectively. Knot Floer homology was developed by Ozsv\'{a}th-Szab\'{o} \cite{OS04:HolomorphicDisks} and independently by Rasmussen \cite{Rasmussen03:Knots}, and it is constructed as a certain Lagrangian Floer homology coming from a Heegaard diagram for the knot. Khovanov homology is a purely algebraic construction with its roots in the representation theory of the quantum group $\mathcal{U}_{q}(\mathfrak{sl}_2)$. Both Khovanov homology and knot Floer homology have constructions for tangles as well, which recover the knot invariants through gluing operations (see \cite{alishahi2018link, BarNatan05:Kh-tangle-cob, caprau2008sl, Khovanov02:Tangles, lauda2009open, Roberts:KhA, Roberts:KhD}, for tangle invariants for Khovanov homology and \cite{alishahi2016tangle, ozsvath2017bordered, ozsvath2018kauffman, petkova2016combinatorial, zibrowius2016heegaard}, for tangle invariants for knot Floer homology).  The aim of this paper is to use the authors' tangle invariant for Khovanov homology \cite{alishahi2018link} and the bimodules of Ozsv\'{a}th and Szab\'{o} for knot Floer homology \cite{ozsvath2017bordered,ozsvath2018kauffman} to give a local relationship between Khovanov homology and knot Floer homology for open braids and their plat closures. Since the invariant from \cite{alishahi2018link} only computes Khovanov homology when working with $\mathbb{Q}$ coefficients, we will work with $\mathbb{Q}$ coefficients throughout the paper. Note that the proof that Ozsv\'{a}th and Szab\'{o}'s algebraically defined invariant can be used to compute knot Floer homology has not yet been written down, but it is currently in preparation (see \cite{ozsvath2017bordered}, Section 1.3).

\subsection{Open braids} The object that Ozsv\'{a}th and Szab\'{o} assign to an open braid $b$ on $m$ strands is a $DA$ bimodule over an algebra $A$. There are several different versions of this theory; for the reader familiar with the construction, we are working with the algebras from \cite{ozsvath2018kauffman} with all strands oriented downwards, so $A=B(m,k) = B(m,k,\emptyset)$. Let $\sigma_i$ denote the elementary braid with a single positive crossing between strands $i$ and $i+1$ (again with all strands oriented downwards), and let $\sigma_{i}^{-1}$ denote the corresponding braid with a negative crossing. Let $\mathsf{OS}_{DA}(\sigma_i)$ and $\mathsf{OS}_{DA}(\sigma_i^{-1})$ denote the corresponding $DA$ bimodules.
 
We can write $b$ as a product of elementary braids ordered from the bottom of the braid to the top
\[ b = \prod_{i=1}^{N} \sigma_{j(i)} \]
with $j(i) \in \{1,-1,2,-2,...,m-1,-m+1\}$ and $\sigma_{-j}=\sigma_{j}^{-1}$. Then the $DA$ bimodule $\mathsf{OS}_{DA}(b)$ is given by the box tensor product
\[ \mathsf{OS}_{DA}(b) =  \mathsf{OS}_{DA}(\sigma_{j(1)}) \boxtimes_{A} \mathsf{OS}_{DA}(\sigma_{j(2)}) \boxtimes_{A} \cdot \cdot \cdot  \boxtimes_{A} \mathsf{OS}_{DA}(\sigma_{j(N)}) . \]

The object that the authors assign to $\sigma_i$ (resp. $\sigma_{i}^{-1}$) in \cite{alishahi2018link} is a differential bimodule $\mathsf{M}(\sigma_i)$ (resp. $\mathsf{M}(\sigma^{-1}_i)$) over an algebra $\mathcal{A}$ which is isomorphic to $A$. It is constructed as an oriented cube of resolutions. In particular, if $\mathsf{X}_{i}$ denotes the elementary singular braid with a singularization between strands $i$ and $i+1$ and $\id$ denotes the identity braid, then there is a bimodule $ \mathsf{M}(\mathsf{X}_i)$ and an identity bimodule $\M(\id)$ so that the crossing bimodules decompose as mapping cones
\[ \M(\sigma_i) = \M(\id) \xrightarrow{d^{+}} \mathsf{M}(\mathsf{X}_i) \]
\[ \M(\sigma^{-1}_i) =  \mathsf{M}(\mathsf{X}_i) \xrightarrow{d^{-}} \M(\id). \]

\noindent
The bimodule $\M(b)$ is given by the tensor product 
\[
\M(b) = \M(\sigma_{j(1)}) \otimes_{\A} \M(\sigma_{j(2)}) \otimes_{\A} \cdot \cdot \cdot \otimes_{\A} \M(\sigma_{j(N)})
.\]

In this paper, we show that the bimodules $\mathsf{M}(\sigma_i)$ and $\mathsf{M}(\sigma^{-1}_i)$ have a set of generators over which they are free left modules over the corresponding idempotent subalgebras of $\A$.
 It follows that they can be viewed as $DA$ bimodules with only $\delta_{1}^{1}$ and $\delta^1_2$ actions, where $\delta_{1}^{1}$ is given by the differential and $\delta^1_2$ describes the right multiplication. We will write these $DA$ bimodules as $\M_{DA}(\sigma_i)$ and $\M_{DA}(\sigma_{i}^{-1})$, respectively.

\begin{theorem} \label{crossinghomotopy}

Under the isomorphism $\A \cong A$, there are homotopy equivalences
\[ \M_{DA}(\sigma_i) \simeq \mathsf{OS}_{DA}(\sigma^{-1}_{i})\]
\[ \M_{DA}(\sigma_{i}^{-1}) \simeq  \mathsf{OS}_{DA}(\sigma_{i}) . \]

\end{theorem}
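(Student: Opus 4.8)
The plan is to match both sides through the mapping-cone presentation that our bimodules already carry. By the free-basis statement recorded above, $\M_{DA}(\sigma_i)$ and $\M_{DA}(\sigma_i^{-1})$ are genuine $DA$ bimodules with only $\delta_1^1$ and $\delta_2^1$, realized as the mapping cones
\[ \M_{DA}(\sigma_i) = \Cone\bigl(\M_{DA}(\id) \xrightarrow{d^{+}} \M_{DA}(\mathsf{X}_{i})\bigr), \qquad \M_{DA}(\sigma_i^{-1}) = \Cone\bigl(\M_{DA}(\mathsf{X}_{i}) \xrightarrow{d^{-}} \M_{DA}(\id)\bigr) . \]
The first step is to produce the analogous presentation of the Ozsv\'ath--Szab\'o bimodules: I would check that the crossing bimodule $\mathsf{OS}_{DA}(\sigma_i^{\pm 1})$ admits a two-step filtration (by an idempotent weight) whose associated graded pieces are, up to homotopy, a copy of the identity bimodule $\mathsf{OS}_{DA}(\id)$ and a copy of the singular-resolution bimodule $\mathsf{OS}_{DA}(\mathsf{X}_{i})$, so that $\mathsf{OS}_{DA}(\sigma_i^{\pm 1})$ is the mapping cone of a connecting morphism between these two pieces, with the two crossings giving cones in \emph{opposite} directions. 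That opposite-direction feature is exactly the source of the sign flip in the statement: the cone for $\M_{DA}(\sigma_i)$ runs from the identity piece to the singular piece, which is the direction of the cone for $\mathsf{OS}_{DA}(\sigma_i^{-1})$, and symmetrically for the other equivalence.

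With both sides written as cones, the theorem reduces to three comparisons. The identity pieces satisfy $\M_{DA}(\id) \cong \mathsf{OS}_{DA}(\id)$ under the fixed isomorphism $\A \cong A$, with nothing to check. For the singular pieces one must prove the key lemma $\M_{DA}(\mathsf{X}_{i}) \simeq \mathsf{OS}_{DA}(\mathsf{X}_{i})$; I would attack this by writing out both bimodules on generators — each is free over the relevant idempotent subring on a generator set indexed by the same combinatorial data of the singularized braid — matching generators by idempotent and bigrading, and comparing the $\delta$-actions. If the Ozsv\'ath--Szab\'o singular bimodule carries higher actions $\delta_{\geq 3}^1$ where $\M_{DA}(\mathsf{X}_{i})$ has only $\delta_1^1, \delta_2^1$, I would remove them by an explicit homological-perturbation-type change of $DA$-basis leaving $\delta_1^1$ untouched, and then identify the reduced bimodule with $\M_{DA}(\mathsf{X}_{i})$ on the nose; since there are finitely many generators this is a finite verification.

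The third comparison — and the step I expect to be the main obstacle — is that under the equivalences just described the connecting morphisms correspond: $d^{+}$ is homotopic to the Ozsv\'ath--Szab\'o connecting morphism $\mathsf{OS}_{DA}(\id) \to \mathsf{OS}_{DA}(\mathsf{X}_{i})$, and $d^{-}$ to the other one. The trouble is that these are $DA$-bimodule morphisms, so one must track all higher components, not just $\delta_1^1$-parts, and the basis change used to simplify $\mathsf{OS}_{DA}(\mathsf{X}_{i})$ injects correction terms that must be carried through the structure relations for $DA$-bimodule morphisms. I see two routes. The computational route: match all structure constants directly, using that the bigrading on $A$ sharply restricts which components of the morphisms, and of a putative homotopy, can be nonzero, so the calculation is bounded if intricate. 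The rigidity route: show that in the relevant bidegree the space $\Mor\bigl(\mathsf{OS}_{DA}(\id), \mathsf{OS}_{DA}(\mathsf{X}_{i})\bigr)$ is one-dimensional modulo homotopy — or at least that every connecting morphism whose cone is not homotopy equivalent to the split extension is homotopic to $d^{+}$ — and observe that $\M_{DA}(\sigma_i)$ is visibly a non-split extension. The rigidity route is much cleaner if the morphism space is indeed that small; otherwise one falls back on the direct computation.

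Finally one assembles the pieces: a homotopy equivalence of the two edge bimodules together with a homotopy between the transported connecting morphisms yields a homotopy equivalence of mapping cones, since mapping cones of $DA$ bimodules are preserved by homotopy equivalence; applying this twice gives the two equivalences of Theorem~\ref{crossinghomotopy}. If at some stage one obtains only a quasi-isomorphism of the underlying $\delta_1^1$-complexes, it upgrades to a homotopy equivalence because all the bimodules in play are finitely generated over finite-dimensional algebras, hence bounded. As a byproduct, transporting our cone decomposition across the equivalence shows $\mathsf{OS}_{DA}(\sigma_i^{\pm 1})$ is the mapping cone of $\M_{DA}(\id) \to \M_{DA}(\mathsf{X}_{i})$ (resp.\ the reverse), which is the oriented skein exact triangle promised in the introduction.
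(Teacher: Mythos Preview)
Your proposal takes a genuinely different route from the paper, and the difference is not cosmetic: the paper's argument is direct and computational, while yours is structural and relies on an intermediate object that is not actually available.

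The paper does \emph{not} attempt to filter $\mathsf{OS}_{DA}(\sigma_i^{\pm 1})$ into identity and singular pieces. Instead it passes to the $DD$ side: box-tensor $\M_{DA}(\sigma_i)$ with the canonical $DD$-bimodule $\mathsf{OS}_{DD}(\id)$, write out the resulting $6$-generator $DD$-bimodule explicitly in a table, apply homological perturbation to cancel the arrow $N_0 \to (1\otimes 1)\cdot N_-$ (respectively $N_+ \to (1\otimes 1)\cdot N_0$ for the negative crossing), and then exhibit an explicit change of basis identifying the resulting $4$-generator $DD$-bimodule with $\mathsf{OS}_{DD}(\sigma_i^{\mp 1})$. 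Quasi-invertibility of $\mathsf{OS}_{DD}(\id)$ then transports the equivalence back to the $DA$ side. The whole argument is a finite, concrete computation with no auxiliary lemmas.

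Your plan has a real gap at the first step. The Ozsv\'ath--Szab\'o crossing bimodule has \emph{four} generator types $\mathbf{N},\mathbf{S},\mathbf{E},\mathbf{W}$, while the putative cone $\M_{DA}(\id)\to\M_{DA}(\mathsf{X}_i)$ has six ($N_0,S$ and $N_+,N_-,E,W$). There is no evident idempotent-weight filtration on the $4$-generator bimodule whose associated graded is the $2$-generator identity plus a $4$-generator singular piece; the two extra $N$-type generators are precisely what gets cancelled in the paper's perturbation step, and that cancellation \emph{is} the content of the equivalence. In other words, the cone decomposition of $\mathsf{OS}_{DA}(\sigma_i^{\pm 1})$ you want to use as input is in fact a \emph{corollary} of the theorem (the paper states it as such), not something you can establish beforehand.

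A second gap is that your ``key lemma'' $\M_{DA}(\mathsf{X}_i)\simeq\mathsf{OS}_{DA}(\mathsf{X}_i)$ invokes an object $\mathsf{OS}_{DA}(\mathsf{X}_i)$ that Ozsv\'ath and Szab\'o never define. The only candidate in the literature is Manion's singular bimodule, and the paper explicitly leaves open whether it is homotopy equivalent to $\M_{DA}(\mathsf{X}_i)$. So even granting your filtration, the comparison of singular pieces is itself an open problem, not a finite verification. The paper's $DD$-side computation avoids this entirely: it never needs a standalone Ozsv\'ath--Szab\'o singular bimodule.
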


\begin{corollary}

For any open braid $b$, $\M_{DA}(b) \simeq \mathsf{OS}_{DA}(\overline{b})$, where $\overline{b}$ is the mirror of $b$.

\end{corollary}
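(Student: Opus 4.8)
The plan is to bootstrap from Theorem~\ref{crossinghomotopy}, which handles a single crossing, to an arbitrary braid word, using that the box tensor product over $A$ is associative and invariant under homotopy equivalence of (bounded) $DA$ bimodules. Write $b=\prod_{i=1}^{N}\sigma_{j(i)}$ from bottom to top, with $j(i)\in\{\pm 1,\dots,\pm(m-1)\}$, so that
\[
\M_{DA}(b)=\M_{DA}(\sigma_{j(1)})\boxtimes_{A}\cdots\boxtimes_{A}\M_{DA}(\sigma_{j(N)}),
\]
and recall that the mirror braid $\overline{b}$ is obtained by reversing every crossing (reflecting through the plane of the diagram), so that $\overline{b}=\prod_{i=1}^{N}\sigma_{-j(i)}$ and, by definition, $\mathsf{OS}_{DA}(\overline{b})=\mathsf{OS}_{DA}(\sigma_{-j(1)})\boxtimes_{A}\cdots\boxtimes_{A}\mathsf{OS}_{DA}(\sigma_{-j(N)})$.

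The two equivalences in Theorem~\ref{crossinghomotopy} together say precisely that $\M_{DA}(\sigma_{j(i)})\simeq\mathsf{OS}_{DA}(\sigma_{-j(i)})$ for every $i$, i.e.\ the $\M_{DA}$ bimodule of a single crossing is homotopy equivalent to the Ozsv\'{a}th--Szab\'{o} bimodule of the crossing of the opposite sign. Since each bimodule in sight is finitely generated, hence bounded, $\boxtimes_{A}$ sends a homotopy equivalence in any one factor to a homotopy equivalence of the products; applying this $N$ times and invoking associativity of $\boxtimes_{A}$ yields
\[
\M_{DA}(b)\;\simeq\;\mathsf{OS}_{DA}(\sigma_{-j(1)})\boxtimes_{A}\cdots\boxtimes_{A}\mathsf{OS}_{DA}(\sigma_{-j(N)})\;=\;\mathsf{OS}_{DA}(\overline{b}).
\]

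The only step that takes genuine work, and hence the main point to watch, is the comparison underlying the first display: that $\M_{DA}(b)$, built by iterated box products, agrees up to homotopy equivalence with the $DA$ reinterpretation of the dg bimodule $\M(b)=\M(\sigma_{j(1)})\otimes_{\A}\cdots\otimes_{\A}\M(\sigma_{j(N)})$. This relies on the fact recorded just before Theorem~\ref{crossinghomotopy} that each $\M(\sigma_{j(i)})$ is free over the relevant left idempotent subring of $\A$, so that box tensoring with it computes the (underived) tensor product and no higher $\Ainf$ operations are introduced, and that freeness over the idempotent ring is preserved under iterated tensoring. One should also fix the convention that the mirror $\overline{b}$ reverses crossings while preserving the top-to-bottom ordering of the elementary braids, since $\boxtimes_{A}$ is not symmetric; with that said, the corollary is essentially a formal consequence of Theorem~\ref{crossinghomotopy}.
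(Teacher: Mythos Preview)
Your proposal is correct and follows essentially the same approach as the paper: the paper's argument (given as the proof of Theorem~\ref{thm4.2}) simply invokes the elementary-crossing equivalences of Theorem~\ref{crossinghomotopy} together with the fact that ``for both $DA$ bimodules, gluing corresponds to box-tensor product.'' Your write-up is in fact slightly more careful than the paper's, in that you flag the identification of $\M_{DA}(b)$ (built via $\boxtimes_A$) with the $DA$ reinterpretation of the ordinary tensor product $\M(b)$, a point the paper leaves implicit.
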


This result also tells us that the Ozsv\'{a}th-Szab\'{o} bimodules can be decomposed as an oriented cube of resolutions:

\begin{corollary}

The $DA$ bimodules $\M_{DA}(\sigma_i)$ and $\M_{DA}(\sigma_{i}^{-1})$ give an oriented cube of resolutions decomposition of $\mathsf{OS}_{DA}(\sigma^{-1}_{i})$ and $\mathsf{OS}_{DA}(\sigma_{i})$, respectively.

\end{corollary}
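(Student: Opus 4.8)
The plan is to read this off directly from Theorem~\ref{crossinghomotopy} together with the mapping cone descriptions of $\M(\sigma_i)$ and $\M(\sigma_i^{-1})$ recalled in the introduction. The first step is to observe that those decompositions persist after passing to $DA$ bimodules: the distinguished set of generators over which $\M(\sigma_i)$ is a free left module over the relevant idempotent subalgebra of $\A$ is the disjoint union of the corresponding generating sets for $\M(\id)$ and $\M(\mathsf{X}_i)$, and, written in terms of these two blocks, the internal differential has off-diagonal component $d^+$. Hence $d^+$ is a morphism of $DA$ bimodules $\M_{DA}(\id)\to\M_{DA}(\mathsf{X}_i)$ with $\M_{DA}(\sigma_i)=\Cone(d^+)$, and symmetrically $\M_{DA}(\sigma_i^{-1})=\Cone\bigl(d^-\colon\M_{DA}(\mathsf{X}_i)\to\M_{DA}(\id)\bigr)$.

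Combining this with the homotopy equivalences of $DA$ bimodules furnished by Theorem~\ref{crossinghomotopy} yields
\[ \mathsf{OS}_{DA}(\sigma_i^{-1})\simeq\Cone\bigl(\M_{DA}(\id)\xrightarrow{\ d^+\ }\M_{DA}(\mathsf{X}_i)\bigr),\qquad \mathsf{OS}_{DA}(\sigma_i)\simeq\Cone\bigl(\M_{DA}(\mathsf{X}_i)\xrightarrow{\ d^-\ }\M_{DA}(\id)\bigr). \]
Since $\M_{DA}(\id)$ is the oriented resolution of the elementary crossing and $\M_{DA}(\mathsf{X}_i)$ is its singularization, each identity realizes $\mathsf{OS}_{DA}(\sigma_i^{\pm 1})$ as the total complex of a length-one oriented cube of resolutions, which is the assertion. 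For an arbitrary open braid $b=\prod_{i=1}^N\sigma_{j(i)}$ one then iterates: the box tensor product of $DA$ bimodules carrying only $\delta_1^1$ and $\delta^1_2$ actions is automatically defined and exact with respect to mapping cones, $(\Cone f)\DT\mathcal{N}\simeq\Cone(f\DT\id_{\mathcal{N}})$, so expanding each factor of $\M_{DA}(\sigma_{j(1)})\DT\cdots\DT\M_{DA}(\sigma_{j(N)})\simeq\mathsf{OS}_{DA}(\overline b)$ into its two-term complex exhibits $\mathsf{OS}_{DA}(\overline b)$ as the total complex of the cube indexed by $S\subseteq\{1,\dots,N\}$ whose $S$-vertex is $\M_{DA}(r_1)\DT\cdots\DT\M_{DA}(r_N)$ with $r_i=\id$ for $i\notin S$ and $r_i=\mathsf{X}_{j(i)}$ for $i\in S$.

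The substance of the argument is entirely contained in Theorem~\ref{crossinghomotopy}; the only additional ingredients are bookkeeping facts, namely that the identification of $\M$ with $\M_{DA}$ respects the mapping cone decomposition (immediate from the block form of the generating set already used to build $\M_{DA}$) and that $\DT$ commutes with $\Cone$ in this setting (immediate from the bounded, $\delta_1^1$-and-$\delta^1_2$-only form of these bimodules). I therefore expect no real obstacle beyond organizing these observations carefully.
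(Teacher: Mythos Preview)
Your proposal is correct and matches the paper's approach: the paper does not supply a separate proof of this corollary, treating it as immediate from Theorem~\ref{crossinghomotopy} together with the mapping cone decompositions $\M(\sigma_i)=\Cone(d^+)$ and $\M(\sigma_i^{-1})=\Cone(d^-)$ already recorded in the introduction. Your additional paragraph iterating to an arbitrary braid goes a bit beyond the stated corollary (which concerns a single crossing) but is in line with the abstract's remark that this triangle can be iterated to a global cube of resolutions.
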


\noindent
The homotopy equivalence from Theorem \ref{crossinghomotopy} is described in more detail at the end of the introduction. First, we will describe the analogous results for the plat maximum and minimum tangles.

\subsection{Plat closures of braids} Suppose that $b$ is an open braid on $m=2n$ strands. Let $\vee(n)$ and $\wedge(n)$ denote the plat minimum and plat maximum, respectively (see Figure \ref{maxandmins}). Then the diagram $D=\mathsf{p}(b)$ is a plat diagram for a link $L$, given by $\vee(n) \cdot b \cdot \wedge(n)$.

\begin{remark}
Note that the edges are oriented consistently with the downward-oriented braid $b$ so that there is a bivalent source at each maximum and a bivalent sink at each minimum. We are secretly viewing the $i$th bivalent vertex in the plat maximum as identified with the $i$th bivalent vertex in the plat minimum, making an oriented singularization. In Khovanov-Rozanksy $\mathfrak{sl}_2$ homology (which is isomorphic to Khovanov homology \cite{hughes2014note, KhovanovRozansky08:MatrixFactorizations}), the complex coming from the oriented singularization is quasi-isomorphic to the one coming from the unoriented smoothing, which allows us to use this trick of orienting all strands downward in a plat diagram for $D$.

\end{remark}

\begin{figure}[h!]
\scriptsize
    \centering
    \begin{subfigure}[b]{0.55\textwidth}
        \centering
\def\svgwidth{8cm} 
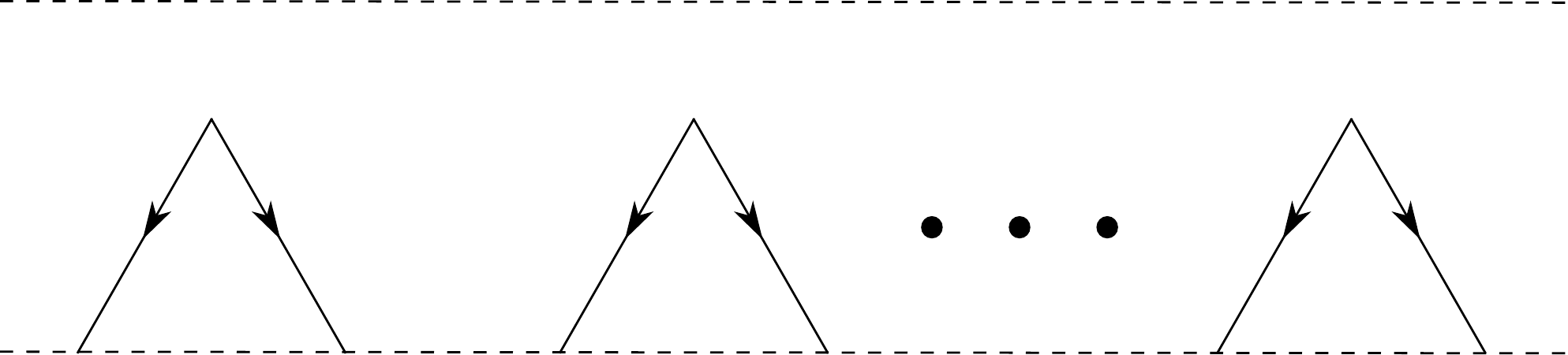
    \caption{The diagram for the plat maximum $\wedge(n)$.}
    \end{subfigure}%
    \vspace{1cm}
    \begin{subfigure}[b]{0.55\textwidth}
    \centering
\def\svgwidth{8cm} 
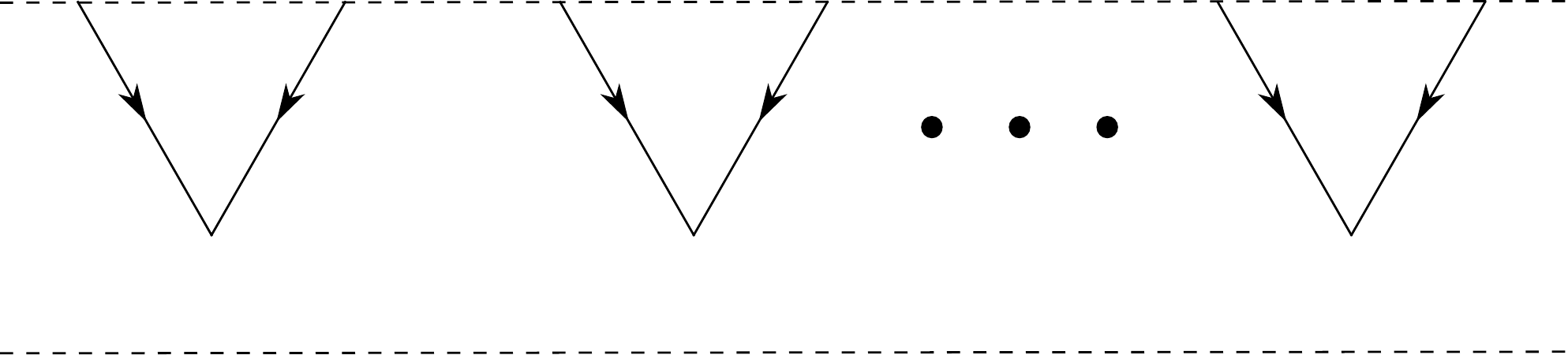
    \caption{The diagram for the plat minimum $\vee(n)$.}
    \end{subfigure}
    \caption{} \label{maxandmins}
\end{figure}

As with open braids, our bimodules $\M(\vee(n))$ and $\M(\wedge(n))$ can be viewed as a Type A structure and a Type $D$ structure, respectively, over $\A$. These will be denoted $\M_{A}(\vee(n))$ and $M_{D}(\wedge(n))$. The complex $\M(D) = \M_{A}(\vee(n)) \boxtimes_{\A} \M_{DA}(b) \boxtimes_{\A} \M(\wedge(n))$ is most naturally viewed as a curved complex with $d^2 =\omega \cdot I$. 

\begin{definition}

The total complex $C_{1 \pm 1}(D)$ is given by the tensor product 
\[ C_{1 \pm 1}(D) = \M(D) \otimes \mathsf{K}(D) \]

\noindent
where $\mathsf{K}(D)$ is a curved Koszul complex with potential $-\omega$.

\end{definition}

Note that since potentials are additive under tensor product, $d^2=0$ on $C_{1 \pm 1}(D)$. This chain complex comes with a filtration induced by height in the cube of resolutions. The following is the main result of \cite{alishahi2018link}:

\begin{theorem}[\cite{alishahi2018link}]

Let $E_{k}(D)$ denote the spectral sequence induced by the cube filtration on $C_{1 \pm 1}(D)$. Then $E_{2}(D) \cong Kh(L)$, and the total homology $E_{\infty}(D)$ is a link invariant.

\end{theorem}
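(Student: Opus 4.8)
The plan is to analyze the spectral sequence $E_k(D)$ directly, page by page. The cube filtration on $C_{1\pm 1}(D)$ is indexed by the height $h(v)=\sum_i v_i$ of a vertex $v\in\{0,1\}^N$ of the cube of resolutions, where $v_i=0$ records the oriented (identity) resolution of the $i$th crossing and $v_i=1$ the singular resolution. Since this filtration is bounded, the spectral sequence converges. Its associated graded splits as $\bigoplus_v \M(v)\otimes\mathsf{K}(D)$, where $\M(v)$ is obtained by tensoring $\M_A(\vee(n))$, the elementary pieces $\M(\id)$ (when $v_i=0$) and $\M(\mathsf{X}_i)$ (when $v_i=1$) in order along $b$, and $\M_D(\wedge(n))$; the $E_0$ differential is the height-preserving internal differential of each summand, and $d^2=0$ on each summand because the potential $\omega$ of $\M(D)$ and the potential $-\omega$ of $\mathsf{K}(D)$ cancel.

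The first main step is to compute $E_1$, i.e.\ the homology of each vertex complex. The key input is that for a fixed resolution $v$, the curved bimodule $\M(v)$ tensored with the Koszul complex $\mathsf{K}(D)$ is a model for the finite-rank module that Khovanov--Rozansky $\mathfrak{sl}_2$ homology assigns to the corresponding oriented singular diagram: its homology is the state module $V(v)$, a tensor product of copies of $\mathbb{Q}[X]/(X^2)$ indexed by the circles of the underlying unoriented resolution, with the appropriate internal grading. This is where the local structure of the elementary bimodules carries the argument: one computes the homology of each building block --- $\M(\id)\otimes\mathsf{K}$, $\M(\mathsf{X}_i)\otimes\mathsf{K}$, and the cup and cap pieces $\M_A(\vee(n))$, $\M_D(\wedge(n))$ --- and checks that tensoring these local answers reproduces the global state module, with all internal gradings and curvature terms matching. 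I expect this to be the main obstacle.

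The next step is to identify the $E_1$ differential with the Khovanov--Rozansky $\mathfrak{sl}_2$ cube differential. The maps $d^+$ and $d^-$ relating the identity and singular resolutions of an elementary crossing induce, after passing to vertex homology, exactly the (normalized) edge maps of the oriented $\mathfrak{sl}_2$ cube of resolutions. Hence $(E_1,d_1)$ is the oriented $\mathfrak{sl}_2$ cube complex of $D$, so $E_2(D)$ is its homology, which is $Kh(L)$ by the identification of $\mathfrak{sl}_2$ Khovanov--Rozansky homology with Khovanov homology \cite{hughes2014note, KhovanovRozansky08:MatrixFactorizations}. One also checks that the grading-shift conventions in the definition of $C_{1\pm 1}(D)$ are chosen so that this is an honest isomorphism, not merely up to an overall shift.

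Finally, for link invariance of $E_\infty(D)$ I would use the locality of the construction. A filtered chain homotopy equivalence of the curved complexes (which remains an honest homotopy equivalence after tensoring with $\mathsf{K}$, where $d^2=0$) induces an isomorphism of the associated spectral sequences from the $E_1$ page onward, hence an isomorphism of $E_\infty$, and tensoring with a homotopy equivalence preserves homotopy equivalence. So it suffices to produce filtered homotopy equivalences of the relevant bimodules for each local move relating plat diagrams of $L$: the braid relation $\M(\sigma_i)\otimes\M(\sigma_{i+1})\otimes\M(\sigma_i)\simeq\M(\sigma_{i+1})\otimes\M(\sigma_i)\otimes\M(\sigma_{i+1})$, far commutativity for $|i-j|\ge 2$, Reidemeister~II ($\M(\sigma_i)\otimes\M(\sigma_i^{-1})\simeq\M(\id)$), Reidemeister~I (a single crossing absorbed into a plat maximum or minimum), and the plat slide (``pitchfork'') moves involving the cups and caps. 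Constructing these homotopy equivalences explicitly --- especially the crossing/crossing relations and the cup--cap moves --- and verifying that they respect the cube filtration is the remaining work; together with the earlier steps it shows that $E_\infty(C_{1\pm 1}(D))$ depends only on $L$.
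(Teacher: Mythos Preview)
This theorem is not proved in the present paper: it is quoted from the authors' earlier work \cite{alishahi2018link} and stated without proof both in the introduction and in Section~3. So there is no ``paper's own proof'' to compare against here.

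Your outline is a reasonable sketch of the strategy one expects in \cite{alishahi2018link}: identify the $E_1$ page with the Khovanov--Rozansky $\mathfrak{sl}_2$ cube (vertex modules and edge maps), invoke the isomorphism of $\mathfrak{sl}_2$ KR homology with Khovanov homology for $E_2$, and prove invariance of $E_\infty$ via filtered homotopy equivalences for the moves generating plat equivalence. Two caveats. First, you correctly flag the hardest step but understate it: showing that $H_*\bigl(\M(v)\otimes\mathsf{K}(D)\bigr)$ agrees with the KR state module is not a matter of computing each elementary piece and ``tensoring the local answers'' --- the tensor products are over $\A$, homology does not in general commute with $\otimes_\A$, and the Koszul factor $\mathsf{K}(D)$ is global rather than local. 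One needs either a K\"unneth-type argument tailored to these bimodules or a direct identification of the global vertex complex with a matrix-factorization model. Second, for invariance you should be explicit that the relevant moves are those for \emph{plat} diagrams (Birman's stabilization and the Hilden-subgroup moves on cups/caps), not just the braid relations plus Reidemeister; your list is in the right spirit but would need to be matched against a generating set for plat equivalence.
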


\noindent
We conjectured that the total homology is $\mathit{HFK}_{2}(L)$, the deformation of knot Floer homology from \cite{dowlin2018family, dowlin2018knot}. The reduced theory $\widehat{\mathit{HFK}}_{2}(L)$ is isomorphic to the $\delta$-graded, reduced knot Floer homology of $L$.

When working over the algebra $A$, the type D module for an upper diagram is a curved type D structure, where the curvature is given in terms of which endpoints are matched in the upper diagram. In particular, the curvature is given by 
\[ (\delta^1)^2 = \sum_{\substack{ \text{matched pairs }\{i,j\} \\ \text{in upper diagram}}} U_{i}U_{j}. \]

\noindent
Ozsv\'{a}th and Szab\'{o} provide three ways of correcting this:

%When working over the algebra $A$, the corresponding bimodules $\mathsf{OS}_{DA}(\vee(n))$ and $\mathsf{OS}_{DA}(\wedge(n))$ become curved $DA$ bimodules, and the total complex \[\mathsf{OS}_{DA}(D) = \mathsf{OS}_{DA}(\vee(n)) \boxtimes_{A} \mathsf{OS}_{DA}(b) \boxtimes_{A} \mathsf{OS}_{DA}(\wedge(n))\] has the same curvature $\omega$ as $\M(D)$. 

\begin{itemize}

\item Using a consistent orientation on $D$, append a variable $C_{i}$ to $A$ if the $i$th strand is oriented upwards satisfying $\partial(C_i) = U_i$. Then add additional differentials in terms of the $C_{i}$ so that the type D modules for an upper diagram satisfy $(\delta^1)^2=0$. This is the method used in \cite{ozsvath2018kauffman}.

\item If two strands $i$ and $j$ are matched in the upper diagram, i.e. they lie on the same component of the upper diagram, append a variable $C_{\{i,j\}}$ with $\partial(C_{\{i,j\}})=U_{i}U_{j}$. Then add additional differentials in terms of the $C_{\{i,j\}}$ so that the type D modules for an upper diagram satisfy $(\delta^1)^2=0$. This is the method used in \cite{ozsvath2017bordered}.

\item Allow each type D module for an upper diagram to have curvature, but add differentials to the minimum diagram $\vee(n)$ in terms of the matching on the upper diagram so that it is a curved type A module with curvature
\[ - \sum_{\substack{ \text{matched pairs }\{i,j\} \\ \text{in upper diagram}}} U_{i}U_{j}. \]
Since curvature is additive under box-tensor product, the complex for a closed diagram satisfies $d^2=0$. This version has not appeared in their papers, but it has been in their lecture series \cite{alfieri2018introduction}.

%Take the tensor product of $\mathsf{OS}(D)$ with the Kozsul complex $\mathsf{K}$ used in \cite{alishahi2018link} for the Khovanov invariant.

\end{itemize}

The third method is frequently referred to as the \emph{curved construction}. Note that in the curved construction, the curvature of both the upper diagram and the lower diagram depends on the matching of the upper diagram. In this paper, we will use a method which most closely resembles the curved construction, but treats the maxima and minima more symmetrically. In particular, the curvature for an upper diagram will depend on the matching in the upper diagram, while the curvature for a lower diagram will depend on the matching in the lower diagram. 

\begin{convention}

For the plat maximum $\mathsf{OS}_{D}(\wedge(n))$, we assign the standard curved type D structure over $A$ from Ozsv\'{a}th and Szab\'{o}'s curved construction. This construction is given by the type D structure from \cite{ozsvath2018kauffman} with no $C_{i}$ variables appearing in the algebra, as all strands are oriented downwards. 

\end{convention}

Thus, the curvature of the type D structure for any upper diagram is given by
\[\sum_{\substack{ \text{matched pairs }\{i,j\} \\ \text{in upper diagram}}} U_{i}U_{j}.\]
\noindent
This matches the curvature of $M_{D}$ of the same upper diagram.

For the minimum, we make a choice that has not yet been explored in the literature. The type D structure for the plat maximum $\wedge(n)$ can be viewed as a left module over $A$. The type A structure for the plat minimum $\vee(n)$ is obtained by simply changing this action to a right action, then viewing the module as a type A module. It turns out that this construction is equivalent to a minor modification of the type A structure from \cite{ozsvath2018kauffman}:

\begin{convention}

For the plat minimum $\mathsf{OS}_{A}(\vee(n))$, we use the construction for a minimum from the ``alternative construction" in Section 9.2 of \cite{ozsvath2018kauffman}, again with the $C_{i}$ variables set equal to zero. We also make the following change: Ozsv\'{a}th and Szab\'{o} treat the absolute minimum as special, and assign a different bimodule to it than to the other $n-1$ minima. We will treat the global minimum the same way as the local minima.

\end{convention}

The curvature of the type A structure for any lower diagram is given by
\[-\sum_{\substack{ \text{matched pairs }\{i,j\} \\ \text{in lower diagram}}} U_{i}U_{j}.\]
Again, this matches the curvature of $M_{A}$ of the same lower diagram. Thus, the total curvature for a closed diagram is given by
\[ \sum_{\substack{ \text{matched pairs }\{i,j\} \\ \text{in upper diagram}}} U_{i}U_{j} - \sum_{\substack{ \text{matched pairs }\{i,j\} \\ \text{in lower diagram}}} U_{i}U_{j}.   \]
\noindent
where $\omega$ is the curvature for our complex $M(D)$.

\begin{definition}

Let $\mathsf{OS}(D)$ be the box tensor product
\[ \mathsf{OS}(D) = \mathsf{OS}_{A}(\vee(n)) \boxtimes_{A} \mathsf{OS}_{DA}(b) \boxtimes_{A} \mathsf{OS}_{D}(\wedge(n)) \]
with $\mathsf{OS}_{A}(\vee(n))$ and $\mathsf{OS}_{D}(\wedge(n))$ defined as above. 

\end{definition}

%Note that with the standard bimodules for a minimum, replacing the global minimum with a local minimum results in a trivial complex, as the minimum is asking for one too many Kauffman states. \todo{Why?}\todo{I think that otherwise you are asking for too many Kauffman states, so there will be no generators in the tensor product.}
%Over the algebra $A$, this is not the case, and there is in fact a clean relationship with the bimodules from \cite{alishahi2018link}: 

\begin{theorem}

With these conventions, we have isomorphisms 
\[ \M_{A}(\vee(n)) \cong \mathsf{OS}_{A}(\vee(n)) \]
\[ \M_{D}(\wedge(n)) \cong \mathsf{OS}_{D}(\wedge(n)) \]
of Type A and Type D structures, respectively.

\end{theorem}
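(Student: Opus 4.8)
The plan is to prove both isomorphisms by putting the two sides into matching explicit presentations over the common algebra, rather than via a homotopy argument: each of $\M_D(\wedge(n))$, $\mathsf{OS}_D(\wedge(n))$, $\M_A(\vee(n))$, $\mathsf{OS}_A(\vee(n))$ is a small, combinatorially defined module that is free over the ring of idempotents of $A$, so once the correct dictionary of generators is fixed the structure maps will coincide literally, yielding an isomorphism (not merely an equivalence). I would establish the statement for the plat maximum first and then deduce the minimum from it.

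\textbf{The maximum.} First I would describe the distinguished generating sets of $\M_D(\wedge(n))$ and of $\mathsf{OS}_D(\wedge(n))$ combinatorially — for the cap diagram on $2n$ strands these are indexed by the idempotents of $A$ compatible with the matching, equivalently by certain partial matchings/states of the capped diagram — and produce the bijection between them intertwining the left idempotent actions; under the isomorphism $\A\cong A$ this is essentially bookkeeping. Next I would check that the Type $D$ operation $\delta^1$ agrees. Since $\wedge(n)$ is the horizontal juxtaposition of the $n$ elementary caps $\cap_{2j-1,2j}$ and both constructions respect this juxtaposition (as a tensor product of Type $D$ data over the relevant idempotent rings), it suffices to match $\delta^1$ for a single elementary cap: there $\delta^1$ is a single explicit coefficient built from the chord generators $R_i,L_i$ and the variables $U_i$, and one compares it against the differential of the one-step cube of resolutions of a cap in \cite{alishahi2018link}. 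Finally, the curvature $(\delta^1)^2=\sum_{\{i,j\}\text{ matched}} U_iU_j$ matches on the nose, as already recorded above. This gives the first isomorphism.

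\textbf{The minimum.} By our convention fixing $\mathsf{OS}_A(\vee(n))$, the Ozsv\'ath--Szab\'o Type A structure for the plat minimum is obtained from $\mathsf{OS}_D(\wedge(n))$ by reinterpreting its left $A$-action as a right $A$-action (and negating the potential, as forced by passing from the upper to the lower matching). So it suffices to verify that the authors' modules are related in exactly the same way, i.e.\ that within the construction of \cite{alishahi2018link} the Type A structure $\M_A(\vee(n))$ is likewise the action-flip of $\M_D(\wedge(n))$ with reversed potential sign. This is a top--bottom reflection symmetry of the cube of resolutions: the diagram $\vee(n)$ is the vertical mirror of $\wedge(n)$, and in the cube-of-resolutions/matrix-factorization formalism this reflection transposes the local data, exchanging the Type $D$ and Type $A$ roles and flipping the sign of the potential. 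Composing this symmetry with the maximum case gives $\M_A(\vee(n))\cong\mathsf{OS}_A(\vee(n))$. (Alternatively one can repeat the generator-and-structure-map comparison verbatim for the minimum, now matching the $m_1$ and $m_2$ actions of a dg right module.)

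\textbf{Main obstacle.} The only real work is conventional rather than structural: the algebras $\A$ and $A$ are merely isomorphic, and the isomorphism must be pushed through the generators, the chord elements $R_i,L_i$, and the variables $U_i$ carefully enough that the curvature terms $U_iU_j$ over matched pairs are identified correctly and the $R/L$-coefficients appearing in $\delta^1$ for a cap line up with the ``which region the differential passes through'' data in the cube of resolutions. I also expect some care is needed to identify $\mathsf{OS}_A(\vee(n))$ with the ``alternative construction'' of Section~9.2 of \cite{ozsvath2018kauffman} and to check that treating the global minimum exactly like the local minima changes nothing. Once this dictionary is in place, the comparison of structure maps is a finite, routine verification.
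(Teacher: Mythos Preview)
Your plan is in the right spirit (compare explicit presentations over the common algebra), but it misses the key simplification that makes the paper's proof essentially a one-liner, and one of your reductions does not work as stated.

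The paper shows earlier (via lemmas for each of the four modules) that
\[
\mathsf{OS}_D(\wedge(n))\cong A\cdot \mathbf{I}_{\mathbf{x}_{even}},\qquad
\mathsf{OS}_A(\vee(n))\cong \mathbf{I}_{\mathbf{x}_{even}}\cdot A,\qquad
\M(\wedge(n))\cong \A\cdot \iota_{\mathbf{x}_{odd}},\qquad
\M(\vee(n))\cong \iota_{\mathbf{x}_{odd}}\cdot \A,
\]
each with a \emph{single} generator and $\delta^1=0$ (resp.\ only the obvious $m_2$). The theorem then follows immediately from the algebra isomorphism $f:\A\to A$, since $f(\iota_{\mathbf{x}_{odd}})=\mathbf{I}_{\mathbf{x}_{even}}$ (the complement of $\{1,3,\dots,2n-1\}$ is $\{2,4,\dots,2n\}$). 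There is no differential to match and no separate curvature check: both sides are literally the same cyclic module over the same algebra.

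Your plan anticipates multiple generators ``indexed by the idempotents of $A$ compatible with the matching'' and a nontrivial $\delta^1$ ``built from the chord generators $R_i,L_i$''. For a \emph{single} cap $\wedge_c$ that is indeed the picture (three generator types $\mathbf{X},\mathbf{Y},\mathbf{Z}$ with nontrivial $\delta^1_1$), but the point is that after box-tensoring the $n$ caps only the $\mathbf{Z}$-type generator with idempotent $\mathbf{x}_{even}$ survives, and $\delta^1(\mathbf{Z})=0$. Missing this reduction makes your outline far more complicated than necessary.

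More seriously, your proposed reduction to one cap via ``horizontal juxtaposition'' of Type $D$ structures is not how the construction works. In both theories $\wedge(n)$ is built as a \emph{sequence} of $DA$ bimodules $\wedge_1$ that add two strands at a time, composed by box tensor product over algebras of increasing size; there is no external tensor of Type $D$ data over a fixed idempotent ring that implements side-by-side caps. So the step ``it suffices to match $\delta^1$ for a single elementary cap'' does not follow from what you wrote. If you want to argue cap-by-cap, you would have to compare the two $DA$ bimodules for $\wedge_1$ and then take box tensors, which is exactly the route the paper's preparatory lemmas take --- and after that reduction the comparison is trivial for the reason above.
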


\begin{corollary}

For any plat diagram $D=\mathsf{p}(b)$, there is a homotopy equivalence of curved complexes $\M(D) \simeq \mathsf{OS}(D)$ and a homotopy equivalence of complexes $C_{1 \pm 1}(D) \simeq \mathsf{OS}(D) \otimes \mathsf{K}$.

\end{corollary}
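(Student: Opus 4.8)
The plan is to derive the corollary by transporting the three structural comparisons just established through the defining box tensor products. Recall that $\M(D)$ and $\mathsf{OS}(D)$ are built by the \emph{same} recipe: a type $A$ structure for $\vee(n)$ at the bottom, the $DA$ bimodule of the braid $b$ in the middle, and a type $D$ structure for $\wedge(n)$ at the top, glued by $\boxtimes$ over $\A \cong A$. The theorem above gives honest isomorphisms $\M_A(\vee(n)) \cong \mathsf{OS}_A(\vee(n))$ and $\M_D(\wedge(n)) \cong \mathsf{OS}_D(\wedge(n))$ at the two ends, and Theorem~\ref{crossinghomotopy} together with its corollary gives a homotopy equivalence of the middle bimodule. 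So the first thing to do is substitute the two end isomorphisms into the definition of $\M(D)$, which costs nothing, leaving a box tensor product that differs from $\mathsf{OS}(D)$ only in its middle factor.

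Next I would invoke the invariance of $\boxtimes$ under homotopy equivalence in a single tensor slot: a homotopy equivalence $N \simeq N'$ of $DA$ bimodules induces a homotopy equivalence $P \boxtimes N \boxtimes Q \simeq P \boxtimes N' \boxtimes Q$. This is the standard argument (as in bordered Floer theory), provided all the box tensor products in sight are well-defined — which is why it is cleanest to carry out the comparison one elementary crossing at a time, since each $\M_{DA}(\sigma_i^{\pm 1})$ and $\mathsf{OS}_{DA}(\sigma_i^{\pm 1})$ is bounded and the homotopy equivalence of Theorem~\ref{crossinghomotopy} is realized by bounded bimodule maps, so the equivalence survives tensoring over all $N$ elementary factors of $b$ and then over the two (now common) ends. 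At this point one also checks that the curvatures match, so that the result is genuinely a homotopy equivalence of curved complexes: by the conventions set above, the upper end contributes $\sum U_i U_j$ over pairs matched in the upper diagram and the lower end contributes $-\sum U_i U_j$ over pairs matched in the lower diagram, on both the $\M$ and the $\mathsf{OS}$ sides, so both total complexes have $d^2 = \omega \cdot I$ for the same $\omega$ and the homotopy equivalence respects this. This yields $\M(D) \simeq \mathsf{OS}(D)$.

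For the second assertion, unwind $C_{1 \pm 1}(D) = \M(D) \otimes \mathsf{K}(D)$, where $\mathsf{K}(D)$ is the curved Koszul complex with potential $-\omega$ determined by the matching data of $D$; in particular it is literally the same object to tensor against $\mathsf{OS}(D)$. Tensoring the homotopy equivalence $\M(D) \simeq \mathsf{OS}(D)$ with $\id_{\mathsf{K}(D)}$ — legitimate because the relevant maps and homotopies are $A$-linear and so commute with the Koszul differential — gives a homotopy equivalence $C_{1 \pm 1}(D) \simeq \mathsf{OS}(D) \otimes \mathsf{K}$. Finally, since potentials are additive under tensor product and $\omega + (-\omega) = 0$, both sides are uncurved chain complexes, so this is a homotopy equivalence of honest chain complexes.

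The main obstacle is the middle step: one must verify that the box tensor product really is invariant under homotopy equivalence in the \emph{curved}, filtration-forgotten context in which these modules live, rather than merely citing the uncurved bordered-Floer version. Concretely, this means tracking boundedness carefully so that every $\boxtimes$ that appears converges, and checking that the homotopy equivalence of Theorem~\ref{crossinghomotopy}, its homotopy inverse, and the witnessing chain homotopies can all be taken bounded, so that the usual ``mapping cone is acyclic, hence box-tensoring preserves the equivalence'' argument goes through. A secondary, purely bookkeeping obstacle is to align the filtration-forgetting of Theorem~\ref{crossinghomotopy} with the curved-complex statement here, and to keep the mirror convention consistent between the open-braid corollary ($\M_{DA}(b) \simeq \mathsf{OS}_{DA}(\overline{b})$) and the braid $b$ that defines $D = \mathsf{p}(b)$ in the definition of $\mathsf{OS}(D)$.
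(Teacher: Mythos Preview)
Your proposal is correct and follows the route the paper intends: the corollary is stated in the introduction without its own proof, as it is meant to be an immediate consequence of the end isomorphisms (Theorem~\ref{thm4.3}) and the crossing homotopy equivalences (Theorem~\ref{crossinghomotopy}), transported through the defining box tensor products and then tensored with $\mathsf{K}$. Your bookkeeping caveat about the mirror convention is well-placed---the paper itself is not fully consistent on this point (compare the statement of Theorem~\ref{thm4.6} with its proof and with Theorem~\ref{crossinghomotopy})---but this is a notational issue rather than a gap in the argument.
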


The knot homology theory $\mathit{HFK}_{2}(K)$ is computed from a chain complex $\mathit{CFK}_{2}(K)$. For a suitable choice of Heegaard diagram $\mathcal{H}$, \[\mathit{CFK}_{2}(K) = \mathit{CFK}_{2}(\mathcal{H}) \otimes \mathsf{K}\] where $\mathit{CFK}_{2}(\mathcal{H})$ is the usual Heegaard Floer construction from $\mathcal{H}$ but with each basepoint assigned a particular coefficient depending on its location in the diagram. The complex $\mathit{CFK}_{2}(\mathcal{H})$ is a curved complex with the same potential $\omega$. We conjecture that this is the holomorphically defined complex that the theory $\mathsf{OS}(D)$ is computing:

\begin{conjecture} \label{rasconj}

The curved complexes $\mathsf{OS}(D)$ and $\mathit{CFK}_{2}(\mathcal{H})$ are homotopy equivalent.

\end{conjecture}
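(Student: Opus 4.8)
Since the Corollary to the preceding Theorem gives a homotopy equivalence $\M(D) \simeq \mathsf{OS}(D)$ of curved complexes, Conjecture \ref{rasconj} is equivalent to the assertion that $\M(D)$ (equivalently $\mathsf{OS}(D)$) is homotopy equivalent to the holomorphically defined curved complex $\mathit{CFK}_{2}(\mathcal{H})$. So the plan is to reduce this to two ingredients: the (forthcoming) identification of the Ozsv\'{a}th--Szab\'{o} algebraic bimodules with holomorphic bordered invariants, and a pairing theorem for the weighted invariants underlying $\mathit{CFK}_{2}$.

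First I would choose the Heegaard diagram $\mathcal{H}$ for $D = \mathsf{p}(b)$ compatibly with the plat presentation and cut it along $2n$-pointed matched circles into bordered pieces: one for the plat minimum $\vee(n)$, one for each elementary braid generator $\sigma_{j(i)}$, and one for the plat maximum $\wedge(n)$. Assigning the appropriate type $A$, $DA$, and $D$ bordered invariants, and weighting each holomorphic disk by the product of the $U$-variables recording which basepoints it crosses, gives weighted (curved) bordered modules. An adapted Lipshitz--Ozsv\'{a}th--Thurston pairing theorem would then identify the box tensor product of these weighted pieces with $\mathit{CFK}_{2}(\mathcal{H})$ up to homotopy equivalence of curved complexes. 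Here one must verify that the curvature produced by boundary degenerations on each piece is exactly $\sum_{\{i,j\}} U_{i} U_{j}$ summed over the matched pairs of the corresponding upper (resp.\ lower) sub-diagram --- that is, that a boundary degeneration sweeping the region between two matched strands is weighted by the two flanking basepoints --- so that the curvature of $\mathsf{OS}(D)$ defined in the excerpt matches the potential $\omega$ of $\mathit{CFK}_{2}(\mathcal{H})$.

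Next I would invoke the identification of Ozsv\'{a}th--Szab\'{o}'s algebraically defined bimodules with these holomorphic bordered invariants. For $\sigma_i^{\pm 1}$ this is the content of the work announced in \cite{ozsvath2017bordered}, Section 1.3. For the plat maximum, the relevant Convention assigns the standard curved type $D$ structure of \cite{ozsvath2018kauffman} with the $C_i$ variables set to zero, which should agree with the holomorphic $\widehat{\mathit{CFD}}(\wedge(n))$; for the plat minimum, one must check that the ``alternative construction'' of Section 9.2 of \cite{ozsvath2018kauffman}, with the global minimum treated symmetrically like the local minima rather than specially, agrees with the holomorphic $\widehat{\mathit{CFA}}(\vee(n))$ after pairing --- i.e.\ that the symmetric treatment of the global minimum does not alter the homotopy type of the total complex. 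Combining these identifications with the weighted pairing theorem yields $\mathsf{OS}(D) \simeq \mathit{CFK}_{2}(\mathcal{H})$, hence the conjecture.

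The main obstacle is the weighted pairing theorem itself: one must establish that weighting disks by basepoint variables is compatible with the differential, with the degenerations used in the proof of the pairing theorem, and with the curvature bookkeeping, so that in particular the curvature terms $U_{i} U_{j}$ are produced by exactly the right boundary degenerations and are neither lost nor double-counted when the pieces are glued. A secondary obstacle, already flagged in the introduction, is that even the undeformed statement --- that the Ozsv\'{a}th--Szab\'{o} algebraic invariants compute $\widehat{\mathit{HFK}}$ --- has not yet appeared in writing, so a complete proof of Conjecture \ref{rasconj} must either await \cite{ozsvath2017bordered} or reprove what is needed in the form required here, including the nonstandard symmetric treatment of the global plat minimum.
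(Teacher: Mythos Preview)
The statement you are attempting to prove is labeled a \emph{Conjecture} in the paper, and the paper does not prove it. Immediately after stating it, the authors write that they ``hope that the forthcoming proofs by Ozsv\'{a}th and Szab\'{o} relating their algebraically defined invariant to knot Floer homology will shed light on this conjecture,'' and they go on to note that an affirmative answer would give a second proof of Rasmussen's conjecture (Theorem~\ref{SS}). So there is no proof in the paper to compare your proposal against.

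Your proposal is a plausible outline of a strategy, and you are candid about its gaps: it rests on (i) a weighted/curved version of the bordered pairing theorem that has not been established, (ii) the announced but unwritten identification of the Ozsv\'{a}th--Szab\'{o} algebraic bimodules with holomorphically defined bordered invariants, and (iii) a verification that the nonstandard symmetric treatment of the global minimum does not change the homotopy type. None of these are available in the literature, and each is substantial; in particular, (i) requires controlling boundary degenerations and curvature under gluing, which is genuinely new analysis rather than a routine adaptation. So what you have written is a reasonable research plan, not a proof, and the paper itself makes no stronger claim.
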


We hope that the forthcoming proofs by Ozsv\'{a}th and Szab\'{o} relating their algebraically defined invariant to knot Floer homology will shed light on this conjecture. Note that the identification in Conjecture \ref {rasconj} would give a second proof of Rasmussen's conjecture:

\begin{theorem}[\cite{dowlin2018spectral}]\label{SS}

For any knot $K$ in $S^3$, there is a spectral sequence from $\overline{Kh}(K)$ to $\widehat{\mathit{HFK}}(K)$, where $\overline{Kh}(K)$ is the reduced Khovanov homology of $K$ and $\widehat{\mathit{HFK}}(K)$ is the reduced knot Floer homology of $K$.

\end{theorem}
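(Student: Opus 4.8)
The statement is due to \cite{dowlin2018spectral}; in the present context the natural thing to sketch is the conditional ``second proof'' flagged just above, namely re-deriving it from the machinery assembled in this paper while granting Conjecture~\ref{rasconj} (and, through it, the forthcoming identification of the Ozsv\'{a}th--Szab\'{o} invariant with knot Floer homology). The plan is to realize the desired spectral sequence as the one induced by the cube-of-resolutions filtration on a reduced total complex, so that all the real work lies in pinning down its $E_2$ and $E_\infty$ pages.

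First I would fix a plat presentation $D=\mathsf{p}(b)$ of $K$ and form the reduced total complex $\overline{C}_{1\pm1}(D)$, the reduced analogue of $C_{1\pm1}(D)=\M(D)\otimes\mathsf{K}(D)$ (everything over $\mathbb{Q}$, as in the rest of the paper). Height in the cube of resolutions makes this a filtered complex; let $E_k$ denote the associated spectral sequence. By the reduced version of the main theorem of \cite{alishahi2018link}, $E_2\cong\overline{Kh}(K)$. Next I would identify the target. By the corollary stating $C_{1\pm1}(D)\simeq\mathsf{OS}(D)\otimes\mathsf{K}$ there is, in the reduced setting, a homotopy equivalence $\overline{C}_{1\pm1}(D)\simeq\overline{\mathsf{OS}(D)}\otimes\mathsf{K}$. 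Invoking Conjecture~\ref{rasconj}, $\mathsf{OS}(D)\simeq\mathit{CFK}_2(\mathcal{H})$, together with $\mathit{CFK}_2(K)=\mathit{CFK}_2(\mathcal{H})\otimes\mathsf{K}$, this reduced complex is homotopy equivalent to the reduced version of $\mathit{CFK}_2(K)$, whose homology is $\widehat{\mathit{HFK}}_2(K)$; and, as recalled above, $\widehat{\mathit{HFK}}_2(K)$ is the $\delta$-graded reduced knot Floer homology $\widehat{\mathit{HFK}}(K)$. Hence $E_\infty\cong\widehat{\mathit{HFK}}(K)$, so $E_k$ is a spectral sequence from $\overline{Kh}(K)$ to $\widehat{\mathit{HFK}}(K)$, as claimed.

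The main obstacle is the single non-formal ingredient: Conjecture~\ref{rasconj} — equivalently, that the purely algebraic curved complex $\mathsf{OS}(D)$ genuinely computes the holomorphically defined $\mathit{CFK}_2(\mathcal{H})$ — which itself rests on the not-yet-published proof that the Ozsv\'{a}th--Szab\'{o} algebraic invariant computes knot Floer homology. Everything else is already available: the cube filtration, the $E_2\cong\overline{Kh}$ computation, the homotopy equivalences of Theorem~\ref{crossinghomotopy} and the plat conventions, and the passage between unreduced and reduced theories. If one wants the statement unconditionally — which is what \cite{dowlin2018spectral} actually delivers — then one should bypass the detour through the holomorphic model and compare the $\mathfrak{sl}_2$ cube of resolutions directly against an oriented cube of resolutions model for $\widehat{\mathit{HFK}}$; there the difficult step is proving that the singular knot Floer homology attached to a fully singularized braid assembles into a complex computing $\widehat{\mathit{HFK}}$ in which the Khovanov differential appears as the first page of a spectral sequence.
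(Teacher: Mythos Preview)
The paper does not prove this theorem at all: it is stated with attribution to \cite{dowlin2018spectral} and no proof is given. The only content in the paper relating to it is the sentence immediately preceding it, ``Note that the identification in Conjecture~\ref{rasconj} would give a second proof of Rasmussen's conjecture,'' which is exactly the conditional argument you chose to sketch. So your proposal is well-matched to the paper in that you correctly identify the statement as imported from elsewhere and then flesh out the conditional route the authors allude to; your outline of that route (cube filtration on the reduced $C_{1\pm1}(D)$, $E_2\cong\overline{Kh}$ from \cite{alishahi2018link}, and $E_\infty\cong\widehat{\mathit{HFK}}$ via the chain $C_{1\pm1}(D)\simeq\mathsf{OS}(D)\otimes\mathsf{K}\simeq\mathit{CFK}_2(\mathcal{H})\otimes\mathsf{K}=\mathit{CFK}_2(K)$ granted Conjecture~\ref{rasconj}) is accurate and is precisely what the authors have in mind.

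You are also right to flag the genuine gap: this is not a proof but a conditional one, contingent on Conjecture~\ref{rasconj} (and behind it the unpublished identification of the Ozsv\'ath--Szab\'o algebraic invariant with knot Floer homology). Your closing paragraph correctly describes what an unconditional proof along the lines of \cite{dowlin2018spectral} requires. There is nothing further to compare, since the paper itself offers no proof.
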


\subsection{Heegaard diagrams}

We will provide here a short description of what these algebraic objects should be computing from the Heegaard Floer perspective. The connections to these Heegaard diagrams have been proved yet, but they can help give the reader some intuition regarding the algebraic constructions.

The $DA$ bimodules that Ozsv\'{a}th and Szab\'{o} assign to a crossing are divided into four types of generators based on their idempotents. Ignoring the module actions $\delta^1_{k}$,
\[ \mathsf{OS}_{DA}(\sigma_i) = \mathbf{N} \oplus \mathbf{S} \oplus \mathbf{E} \oplus \mathbf{W} = \mathsf{OS}_{DA}(\sigma^{-1}_i)\]

\noindent
These four types of generators correspond to the local intersection points in the Kauffman states Heegaard diagram for a crossing (see Figure \ref{KauffmanCorners}).

\begin{figure}[h!]
\scriptsize
    \centering
    \begin{subfigure}[b]{0.45\textwidth}
        \centering
\def\svgwidth{6cm} 
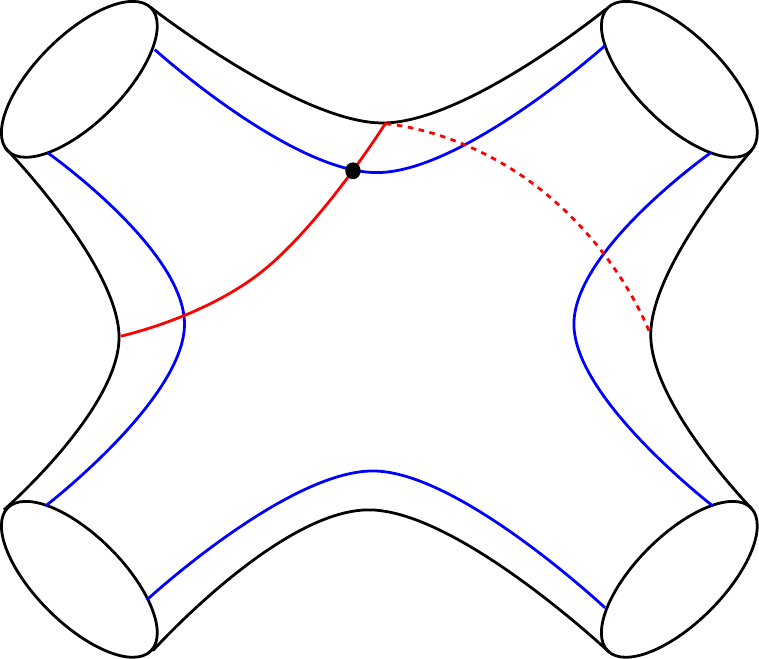
    \end{subfigure}%
    \hspace{1cm}
    \begin{subfigure}[b]{0.45\textwidth}
    \centering
\def\svgwidth{6cm} 
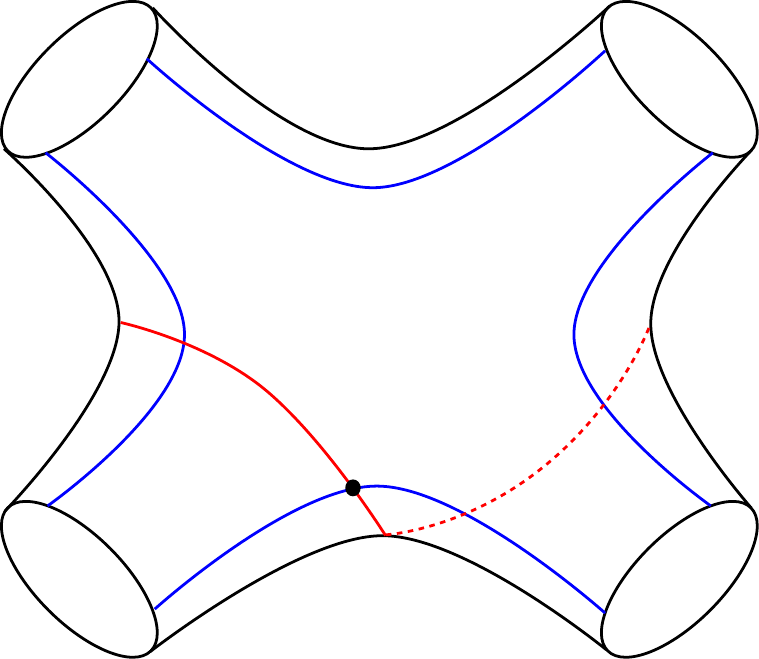

    \end{subfigure}
    \caption{The Kauffman states Heegaard diagram for a positive crossing (left) and a negative crossing (right).} \label{KauffmanCorners}
\end{figure}

The $DA$ bimodule $\mathsf{M}_{DA}(\mathsf{X}_i)$ is given by 
\[ \mathsf{M}_{DA}(\mathsf{X}_i) = N_{+} \oplus N_{-} \oplus E \oplus W \]

\noindent
with $N_{+} \cong N_{-} \cong \mathbf{N}$, $E \cong \mathbf{E}$, and $W \cong \mathbf{W}$. The identity $DA$ bimodule can be decomposed as 
\[\M_{DA}(\id) = N_{0} \oplus S \]
with $N_{0} \cong \mathbf{N}$ and $S \cong \mathbf{S}$. As a heuristic, we expect that the $DA$ bimodule $\M_{DA}(\mathsf{X}_{i})$ should come from the immersed ``figure-eight curve" and the $DA$ bimodule $\M_{DA}(\id)$ should come from the usual curve for the oriented smoothing (see Figure \ref{singandsmoothing}). This heuristic is quite similar to the oriented skein exact triangle from \cite{zibrowius2016heegaard}, where the same curves are used to give an oriented skein exact triangle in the context of 4-ended tangles.

\begin{remark}

Studying Heegaard diagrams coming from immersed $\alpha$ curves requires working with immersed Lagrangians, which only makes sense when one can show that the bounding cochains have trivial contribution \cite{akaho2008immersed}.

\end{remark}

Manion has also given a $DA$ bimodule for a singularization \cite{manion2019singular}. It is based on a different Heegaard diagram, but it is meant to be computing the same quantity; it would be interesting to see if his construction is homotopy equivalent to $\M_{DA}(\mathsf{X}_i)$.

\begin{figure}[h!]
\scriptsize
    \centering
    \begin{subfigure}[b]{0.45\textwidth}
        \centering
\def\svgwidth{6cm} 
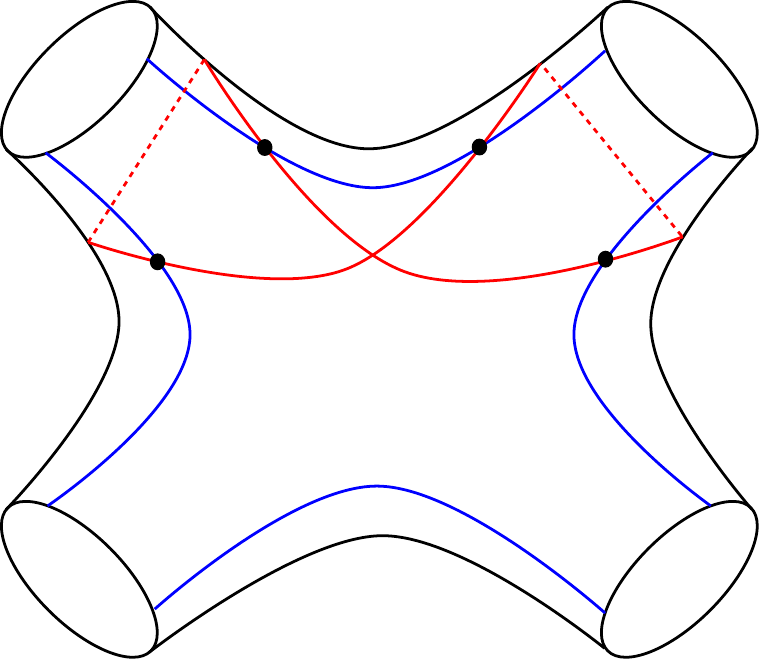
    \end{subfigure}%
    \hspace{1cm}
    \begin{subfigure}[b]{0.45\textwidth}
    \centering
\def\svgwidth{6cm} 
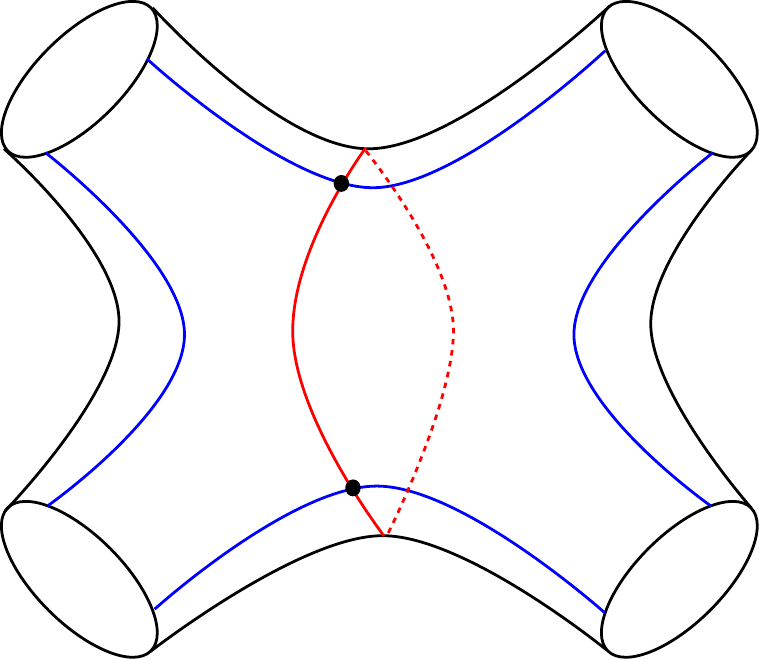

    \end{subfigure}
    \caption{The expected Heegaard diagrams for the complex $\M_{DA}(\mathsf{X}_{i})$ (left) and for the complex $\M_{DA}(\id)$ (right).} \label{singandsmoothing}
\end{figure}

Since the bimodules for crossings are each a mapping cone on $\M(\mathsf{X}_i)$ and $\M(\id)$, we have
\[ \mathsf{M}_{DA}(\sigma_i) = N_{+} \oplus N_{-} \oplus N_{0} \oplus E \oplus W \oplus S = \mathsf{M}_{DA}(\sigma^{-1}_i)\]

\noindent
In particular, it is isomorphic to the Ozsv\'{a}th-Szab\'{o} bimodules direct sum two copies of $\mathbf{N}$.

For the positive crossing, the edge map $d^{+}$ sends $N_{0}$ isomorphically to $N_{-}$. In terms of the $DA$ bimodule $\M_{DA}(\sigma_i)$, this gives an action $\delta_{1}^{1}(N_{0})=N_{-}$. Applying homological perturbation to contract this map gives a homotopy equivalent complex with generators $N_{+},S,E,W$. This $DA$ bimodule turns out to be isomorphic to the $DA$ bimodule of Ozsv\'{a}th and Szab\'{o} for a negative crossing $\mathsf{OS}_{DA}(\sigma^{-1}_i)$.

For the negative crossing, the edge map $d^{-}$ sends $N_{+}$ isomorphically to $N_{0}$. This gives $\delta^1_1(N_{+})=N_{0}$ on $\M_{DA}(\sigma^{-1}_i)$. Applying homological perturbation gives a homotopy equivalent complex with generators $N_{-}, S, E, W$. This $DA$ bimodule is isomorphic to $\mathsf{OS}_{DA}(\sigma_i)$.

The Heegaard diagrams that we expect to give rise to the two bimodules $\mathsf{M}_{DA}(\sigma_i)$ and $\mathsf{M}_{DA}(\sigma^{-1}_i)$ each come from resolving one of the intersection points between the $\alpha$ curve for $\M_{DA}(\mathsf{X}_{i})$ and the $\alpha$ curve for $\M(\id)$. The $\alpha$ curve is an immersed curve, but by applying isotopy it can be made into an embedded curve. See Figure \ref{SixGeneratorsHD} for the positive crossing case -- the negative crossing case is similar.

\begin{figure}[h!]
\scriptsize
    \centering
    \begin{subfigure}[b]{0.45\textwidth}
        \centering
\def\svgwidth{6cm} 
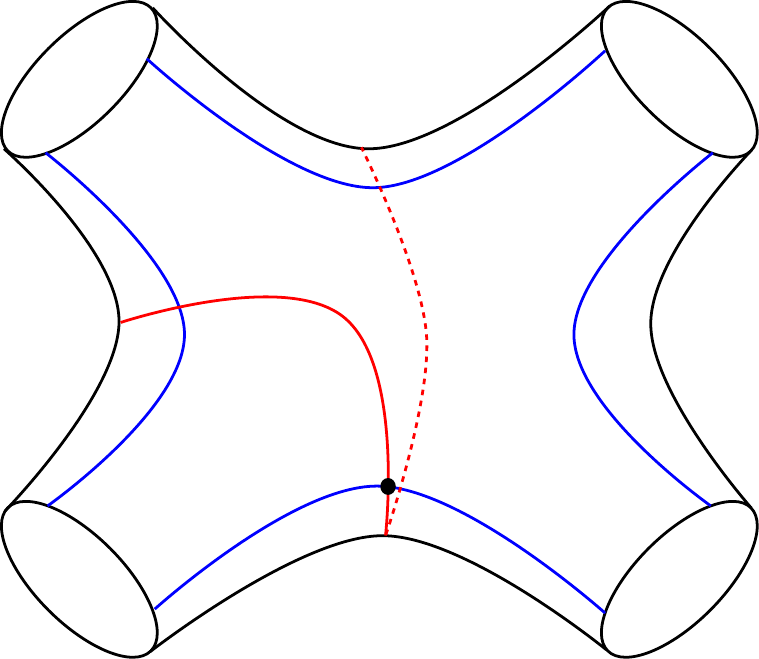
    \end{subfigure}%
    \hspace{1cm}
    \begin{subfigure}[b]{0.45\textwidth}
    \centering
\def\svgwidth{6cm} 
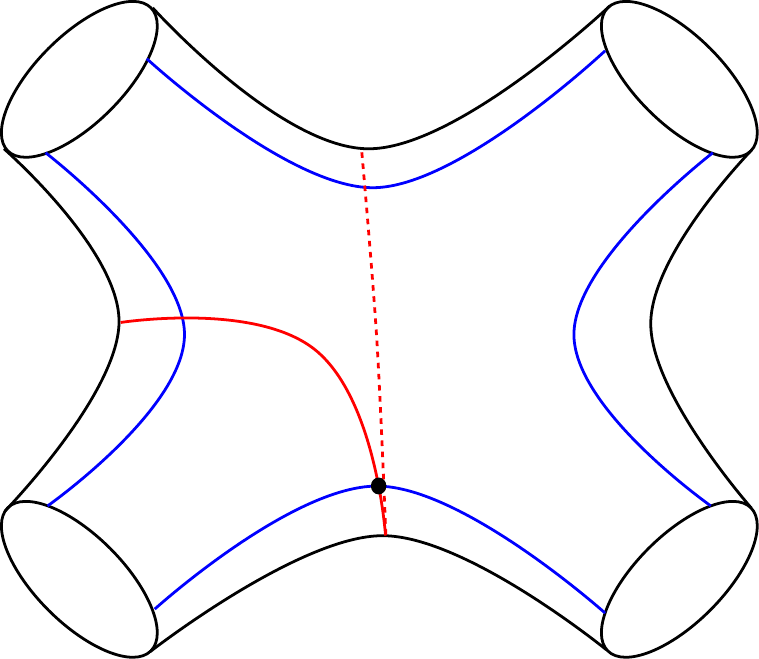

    \end{subfigure}
    \caption{The immersed (left) and embedded (right) diagrams that we expect to compute $\mathsf{M}_{DA}(\sigma_i)$. Note that they are isotopic to the Kauffman states diagram for a negative crossing.} \label{SixGeneratorsHD}
\end{figure}

\begin{remark}

Homological perturbation is somewhat easier to compute for $DD$-bimodules than for $DA$ bimodules. For this reason, we prove the above homotopy equivalences by first box-tensoring both $\M_{DA}(\sigma_i)$ and $\mathsf{OS}_{DA}(\sigma_{i}^{-1})$ with the canonical $DD$-bimodule, then using homological perturbation to show that the resulting $DD$-bimodules are homotopy equivalent. The same applies to $\M_{DA}(\sigma^{-1}_i)$ and $\mathsf{OS}_{DA}(\sigma_{i})$.

\end{remark}

\subsection{Organization} The body of the paper has three sections. In Section 2, we give background on the Ozsv\'{a}th-Szab\'{o} bimodules and describe the versions we use. In Section 3, we give background on our bimodules from \cite{alishahi2018link} and describe a basis with respect to which they can be viewed as $DA$ bimodules. In Section 4, we prove the homotopy equivalences between the two theories. We will assume that the reader is familiar with Type A and Type D structures; the relevant background is provided in \cite{ozsvath2017bordered} and \cite{ozsvath2018kauffman}.

\subsection*{Acknowledgments} We would like to thank Robert Lipshitz, Andy Manion, Peter Ozsv\'{a}th, Zolt\'{a}n Szab\'{o}, Ian Zemke, and Claudius Zibrowius for helpful discussions.

%\section{Notation (to be deleted at the end)}
%
%
%Proposed (temporary) notation:
%
%\begin{itemize}
%    \item $\mathbf{x}$ for an idempotent state (a subset of $[2n]$).
%    \item $\M(D)$: Our bimodule for $D$.
%    \item $\M_{DA}(D)$: Our $DA$ bimodule for $D$.
%    \item $\mathsf{OS}_{DA}(D)$: OS's $DA$ bimodule for $D$.
%    \item $A$ for the Ozsv\'{a}th-Szab\'{o} algebra with with no $E$ or $C$ variables.
%    \item $A'$ for the OS dual algebra (with the $E_{i}$ variables)
%    \item $\A$ for our algebra.
%    \item $\mathbf{I_x} $, $L_{i}$, $R_{i}$, $U_{i}$ for the OS generators of the algebra
%    \item $\iota_{\mathbf{x}}$, $\mathcal{L}_{i}$, $\mathcal{R}_{i}$, $U_{i}$ for the generators of $\A$.
%\end{itemize}

\section{Background I: The Ozsv\'{a}th-Szab\'{o} bimodules}

In this section, we will give background on the knot invariants of Ozsv\'{a}th-Szab\'{o} from \cite{ozsvath2018kauffman} and \cite{ozsvath2017bordered}.

\subsection{The algebras $B_{0}(m,k)$ and $B(m,k)$.}

Fix integers $k,m$. The algebra $B_{0}(m,k)$ is a certain type of strands algebra consisting of $k$ strands on $m$ points. The idempotent states are given by $k$-element subsets $S$ of $\{1,...,m\}$. Given two idempotent states $\mathbf{x},\mathbf{y}$, there is an isomorphism
\[  \phi^{\mathbf{x},\mathbf{y}}: \Z[U_{1},...,U_{m}] \to \mathbf{I_x} \cdot B_{0}(m,k) \cdot \mathbf{I_y}    \]

\noindent
As a $\Z$-module, $B_{0}(m,k)$ is the direct sum
\[  B_{0}(m,k) = \bigoplus_{\mathbf{x}, \mathbf{y}} \mathbf{I_x} \cdot B_{0}(m,k) \cdot \mathbf{I_y} .  \]

\noindent
An algebra element $b \in B_{0}(m,k)$ is called \emph{pure} if it is the image of a monomial \[b = \phi^{\mathbf{x}, \mathbf{y}}(U_{1}^{t_{1}} \cdot \cdot \cdot U_{m}^{t_{m}}). \]

The composition rules are described in terms of weights. Given an idempotent state $\mathbf{x}$, the weight vector $v^{\mathbf{x}} \in \Z^m$ is given by 
\[v_{i}^{\mathbf{x}} = \#\{x \in \mathbf{x} \mid x \ge i \} . \]

\begin{definition}
The \emph{minimal weight vector} $w^{\mathbf{x}, \mathbf{y}}$ is given by 
\[ w_{i}^{\mathbf{x},\mathbf{y}} = \frac{1}{2} |v_{i}^{\mathbf{x}} - v_{i}^{\mathbf{y}}| .\]
The weight of a pure algebra element is 
\[ w( \phi^{\mathbf{x}, \mathbf{y}}(U_{1}^{t_{1}} \cdot \cdot \cdot U_{m}^{t_{m}})) = w^{\mathbf{x}, \mathbf{y}} +(t_{1},...,t_{m}) .\]
\end{definition}

Multiplication in $B_{0}(m,k)$ is defined to be the unique $\Z[U_{1},...,U_{m}]$-equivariant weight-preserving map 
\[ (\mathbf{I_x} \cdot B_{0}(m,k) \cdot \mathbf{I_{y}}) \ast   (\mathbf{I_{y}} \cdot B_{0}(m,k) \cdot \mathbf{I_{z}}) \to  \mathbf{I_{x}} \cdot B_{0}(m,k) \cdot \mathbf{I_{z}}  . \]

\begin{remark}

Our convention differs slightly from the standard definitions in \cite{ozsvath2017bordered} and \cite{ozsvath2018kauffman}. First, we have a shift in the idempotent states so that the first slot (the one to the left of the first strand) is labeled 1 instead of $0$. Second, we truncate the algebra on the right, so that the slot to the right of the last strand is blocked (see Figure \ref{IDExAD}). In Section 12 of \cite{ozsvath2018kauffman}, Ozsv\'{a}th and Szab\'{o} show that one can truncate on the left, on the right, or even on both sides, and obtain the same knot invariant after closing off. Andy Manion refers to the once-truncated algebra as ``the Goldilocks" algebra, as a diagram on $m$ strands has $2^m$ idempotents, which is the nicest possible choice from a higher representation theory perspective.

\end{remark}

 \begin{figure}[ht]
\centering
\def\svgwidth{12cm} 
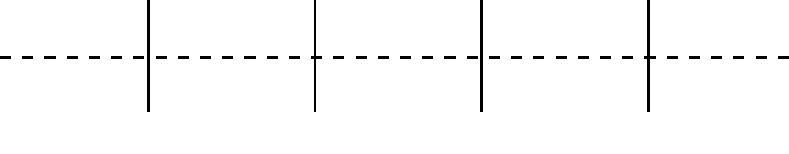
\caption{The idempotent $\{1,3\} \in B_{0}(4,2)$ in the conventions of this paper.}\label{IDExAD}
\end{figure}

 \begin{figure}[ht]
\centering
\def\svgwidth{12cm} 
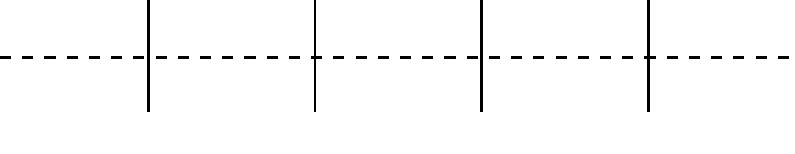
\caption{The idempotent $\{1,3\} \in B_{0}(4,2)$ in the conventions of \cite{ozsvath2018kauffman} and \cite{ozsvath2017bordered}.} \label{IDExOS}
\end{figure}

The algebra $B_{0}(m,k)$ can also be described in terms of generators $L_{i}$, $R_{i}$, for $i=1,...,m-1$. Let $\mathbf{x}$ be an idempotent state such that $i \in \mathbf{x}$, $i+1 \notin \mathbf{x}$, and let $\mathbf{y} = \mathbf{x} \setminus \{i\} \cup \{i+1\}$. Then $R_{i}^{\mathbf{x}}$ and $L_{i}^{\mathbf{y}}$ are the minimal weight elements connecting $\mathbf{x}$ to $\mathbf{y}$ and $\mathbf{y}$ to $\mathbf{x}$, respectively:
\[ R_{i}^{\mathbf{x}} = \phi^{\mathbf{x}, \mathbf{y}}(1) \hspace{3cm}  L_{i}^{\mathbf{y}} = \phi^{\mathbf{y}, \mathbf{x}}(1)  \]

\noindent
The generators $R_{i}$ (resp. $L_{i}$) are the sums of the $R_{i}^{\mathbf{x}}$ and (resp. $L_{i}^{\mathbf{y}}$) over all possible generators $\mathbf{x}$ (resp. $\mathbf{y}$):

\[   R_{i} = \sum_{  \{\mathbf{x} \mid i \in \mathbf{x}, i+1 \notin \mathbf{x} \} } R_{i}^{\mathbf{x}}   \hspace{2.5cm} L_{i} = \sum_{  \{\mathbf{y} \mid i \notin \mathbf{y}, i+1 \in \mathbf{y} \} } L_{i}^{\mathbf{y}}    \]

These generators commute with one another, provided their indices differ by at least two. Let $i,j$, with $|i-j| \ge 2$. Then 
\[ R_{i}R_{j}=R_{j}R_{i} \hspace{2cm} L_{i}L_{j} = L_{j}L_{i} \hspace{2cm} R_{i}L_{j} = L_{j}R_{i}  \]

Let $\mathbf{I_{x(i)}}$ denote the sum of all idempotents $\mathbf{I_{x}}$ such that $i \in \mathbf{x}$. Then $B_{0}(m,k)$ is generated over the idempotents by $R_{i}, L_{i}$, and $U_{i}$, modulo the relations
\[ R_{i} \cdot L_{i} = \mathbf{I_{x(i)}} \cdot U_{i} \cdot \mathbf{I_{x(i)}} \]
\[ L_{i} \cdot R_{i} = \mathbf{I_{x(i+1)}} \cdot U_{i} \cdot \mathbf{I_{x(i+1)}} \]
\noindent
with the $U_{i}$ central in the algebra. 
%Since we have truncated on the right, we also have $U_{m} \cdot \mathbf{I_x} = 0$ if $m \notin \mathbf{x}$.
 
\begin{definition}

The algebra $B(m,k)$ is the quotient of $B_{0}(m,k)$ modulo the relations \[ R_{i}R_{i+1}=0, \hspace{3cm} L_{i+1}L_{i}=0 \]
and $U_{i}\cdot \mathbf{I_x}=0$ if $\{i, i+1 \} \cap \mathbf{x}=\emptyset$.

\end{definition}

Let $\mathbf{x}=x_{1}<x_{2}<...<x_{k}$, $ \mathbf{y}=y_{1}<y_{2}<...<y_{k}$ be two idempotent states. The states $\mathbf{x}$ and $\mathbf{y}$ are said to be \emph{close enough} if $|x_{i}-y_{i}| \le 1$ for all $i$. Otherwise, they are called \emph{too far}. Note that the factor $\mathbf{I_x} \cdot B(m,k) \cdot \mathbf{I_{y}}$ is non-trivial if and only if $\mathbf{x}$ and $\mathbf{y}$ are close enough.

\subsection{The algebras $A(n,k,M)$ and $A'(n,k,M)$}

Suppose $m=2n$, and let $M$ be a matching of the $2n$ points. The algebra $B(2n,k)$ can be extended to a differential algebra $A(n,k,M)$ by appending a variable $C_{ \{i,j \} }$ to $B(2n,k)$ for each matched pair $\{i,j\} \in M$. These variables are central up to signs, and they satisfy 
\[ C_{\{ i,j\} }^2 =0 \hspace{3cm} \partial(C_{\{ i,j\}})=U_{i}U_{j} \]

\noindent
The signs in the commutation relations are given by
\[ C_{\{ i,j\}} \cdot b =   b \cdot C_{\{ i,j\}} \]\[ C_{\{ i_1,j_1\}} \cdot C_{\{ i_2,j_2\}} = - C_{\{ i_2,j_2\}} \cdot C_{\{ i_1,j_1\} }   \]

\noindent
where $b$ is an element of $B(2n,k)$.

The variables $C_{\{ i,j\}}$ are called \emph{exterior} variables, and the \emph{exterior weight} of a homogeneous element $a$ of $A(n,k,M)$ is the number of exterior factors in $b$. Let $|a|$ denote the exterior weight of $a \mod 2$. The signs in the commutation can be written in terms of the exterior weight as follows:
\[ C_{\{ i,j\}} \cdot a = (-1)^{|a|} a \cdot C_{\{ i,j\}} \]

\noindent
The differential on $A(n,k,M)$ satisfies the Leibniz rule
\[ \partial(a \cdot b) = \partial(a) \cdot b + (-1)^{|a|}a \cdot \partial(b) .\]

There is also a dual algebra to $A(n,k,M)$ given by $A'(n,2n-k,M)$. It is obtained from $B(2n,2n-k)$ by appending variables $E_{1},...,E_{2n}$ satisfying 
\[ E_{i}^2=0 \hspace{3cm} \partial(E_{i}) = U_{i} \]

\noindent
The $E_{i}$ are also exterior variables, and the commutation relations are given by 
\[ E_{i} \cdot b = b \cdot E_{i} \]
\[ E_{i} \cdot E_{j} = - E_{j} \cdot E_{i} \text{ for } \{i,j\} \notin M . \]

\noindent
Note that for each pair $\{i,j\} \in M$, there is a non-zero commutator $\llbracket E_{i}, E_{j} \rrbracket=E_{i}E_{j} + E_{j}E_{i}$. 

The $DA$ bimodules in \cite{ozsvath2017bordered} are defined over $A(n,k,M)$, while the $DD$-bimodules are defined as left  $A(n,k,M)-A'(n,k,M)$ bimodules.

\subsection{The algebras $A$ and $A'$}

Let $\mathcal{S}$ denote a subset of $\{1,2,...,m\}$. The subset $\mathcal{S}$ records which strands in the knot diagram are pointed downwards at this particular slice. In \cite{ozsvath2018kauffman}, Ozsv\'{a}th and Szab\'{o} define the algebra $B(m,k,\mathcal{S})$ which is obtained by appending a variable $C_{i}$ to $B(m,k)$ for each $i \in \mathcal{S}$, which behaves much like the $E_{i}$ from above:
\[ C_{i}^2 = 0 \hspace{3cm} \partial(C_{i})=U_{i} \]

\noindent
The $C_{i}$ are exterior variables, and the commutation relations are given by 
\[ C_{i} \cdot b = b \cdot C_{i} \]
\[ C_{i} C_{j} =-C_{j} C_{i} \]

Let $\mathcal{S}_{1}$, $\mathcal{S}_{2}$ be subsets such that 
\[ \mathcal{S}_{2} = \{1,...,m\} \setminus \mathcal{S}_1 .\]

\noindent
Ozsv\'{a}th and Szab\'{o} show that $B(m,k,\mathcal{S}_1)$ is dual to $B(m,m-k, \mathcal{S}_{2})$. Thus, the algebra $A(n,k,M)$ can be viewed as $B(2n,k, \emptyset)$ with $C_{\{i,j\}}$ variables added for the matching, and $A'(n,2n-k,M)$ can be viewed as $B(2n,k, \{1,2,...,2n\})$ with a commutator $\llbracket E_{i}, E_{j} \rrbracket$ added for each matching.

%\begin{figure}[ht]
%\centering
%\def\svgwidth{8cm} 
%
%\caption{The Heegaard diagram with two %$O$ basepoints in a row at at maximum.} \label{TwoOBasepoints} 
%\end{figure}

We take the philosophy that working over $A(n,k,M)$ and $A'(n,2n-k,M)$ corresponds to working with a Heegaard diagram with all strands pointing downward. This means that the orientations don't agree at the cups and caps, which aligns with the Heegaard diagrams used for motivation in \cite{alishahi2018link}, which have two $O$ basepoints in a row at each cup and cap.

%so a `ladybug' stabilization at a maximum would have two type $O$ basepoints in a row as in Figure \ref{TwoOBasepoints}. Thus, each type $D$ structure for an upper-half diagram will have curvature, and the $C_{\{i,j\}}$ variables are a correction term to the Maslov index 2 degenerations. 

Let $A(n,k)$ denote the algebra $A(n,k,M) / \{ C_{\{i,j\}}=0\}$. Then $A(n,k) = B(2n,k,\emptyset)$, and the dual algebra is given by $A'(n,2n-k)=B(2n,2n-k,\{1,2,...,2n\})$. Moving to these algebras that forget the matching results in computing the curved version of Ozsv\'{a}th and Szab\'{o}'s theory \cite{alfieri2018introduction}. We choose to work with these `unmatched' algebras $A(n,k)$ and $A'(n,2n-k)$, as it provides the best context for comparison with Khovanov homololgy. When the context is clear, we will drop the indices and simply denote them $A$ and $A'$.

%This is the complex that we want to compute, as $\widehat{\mathit{HFK}}(K)$ can be computed from it in a way that the relationships with Khovanov homology is readily apparent \cite{alishahi2018link}. For this reason, we choose to work with these `unmatched' algebras $A(n,k)$ and $A'(n,2n-k)$. When the context is clear, we will drop the indices and simply denote them $A$ and $A'$.

\subsection{The alternate generators $R'_{i}$ and $L'_{i}$}

When using the algebra $A'$, it will be useful to have generators $R'_{i}$ and $L'_{j}$ that anti-commute instead of commute for $|i-j| \ge 2$, as it makes the formulas for the $DD$-bimodules much cleaner.

For each idempotent state $\mathbf{x}$, define
\[ f(i, \mathbf{x}) = \sum_{x \in \mathbf{x}, \hspace{1mm} x<i-1 } x .\]

\noindent
Then setting 
\[ \mathbf{I_x} \cdot R'_i = (-1)^{f(i, \mathbf{x})} \mathbf{I_x} \cdot R_i \]
\[ \mathbf{I_x} \cdot L'_i = (-1)^{f(i, \mathbf{x})} \mathbf{I_x} \cdot L_i \]
gives a new set of generators satisfying
\[ R'_{i}\cdot R'_{j} = -R'_{j} \cdot R'_{i} \hspace{2cm} R'_{i} \cdot L'_{j} = -L'_{j} \cdot R'_{i} \hspace{2cm} L'_{i} \cdot L'_{j} = - L'_{j} \cdot L'_{i} \]
for $|i-j| \ge 2$, and 
\[ L'_{i} \cdot R'_{i} = L_{i} \cdot R_{i} \hspace{3cm} R'_{i} \cdot L'_{i} = R_{i} \cdot L_{i} .\]

\subsection{The canonical $DD$-bimodule}

Let $A = A(n,k)$, $A' = A'(n,2n-k)$. The canonical $DD$ bimodule for $2n$ downward pointing strands with no crossings is a $DD$-bimodule over $A - A'$. It will be denoted ${}^{A, A'}\mathsf{OS}(\id)$.

This bimodule is generated over $\Z$ by complementary pairs of idempotent states in $A, A'$. In particular, let $\mathbf{K}$ denote the free $\Z$-module generated by $\mathbf{K_{x}}$ over all idempotent states $\mathbf{x} \in A(n,k)$. $\mathbf{K}$ can be viewed as a left module over the idempotent subalgebras $\mathbf{I}(A) \otimes \mathbf{I}(A')$ as follows:
\[ 
(\mathbf{I_{x}} \otimes \mathbf{I_{y}}) \cdot \mathbf{K_{w}}= \begin{cases} \mathbf{K_{w}} \text{ if }\mathbf{x}=\mathbf{w} \text{ and } \mathbf{y}=\mathbf{\overline{w}} \\
0 \text{ otherwise}
\end{cases}
\]

Consider the algebra element $a \in A \otimes A'$ given by 
\[ a = \sum_{i=1}^{2n}(R_{i} \otimes L'_{i} + L_{i} \otimes R'_{i}) - \sum_{i=1}^{2n} U_{i} \otimes E_{i}    \]

\noindent
The differential on the $DD$-bimodule ${}^{A, A'}\mathsf{OS}(\id)$ is given by
\[ \delta^1: \mathbf{K} \to (A \otimes A') \otimes_{\mathbf{I}(A)\otimes\mathbf{I}(A')} \mathbf{K} \]
\[ \delta^1(x) = a \otimes x \]

\begin{lemma}[\cite{ozsvath2017bordered}]
The identity $DD$-bimodule $\mathsf{OS}_{DD}(\id)$ is quasi-invertible.
\end{lemma}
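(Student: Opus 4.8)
The plan is to exhibit an explicit quasi-inverse, following the standard pattern for bordered-Floer identity bimodules. The claim that $\mathsf{OS}_{DD}(\id)$ is quasi-invertible means there is a $DD$-bimodule ${}^{A',A}\mathsf{OS}(\id)$ over $A'-A$ (the ``other-handedness'' identity bimodule, or perhaps the same one read backwards) such that both one-sided box tensor products ${}^{A,A'}\mathsf{OS}(\id) \boxtimes {}^{A',A}\mathsf{OS}(\id)$ and ${}^{A',A}\mathsf{OS}(\id) \boxtimes {}^{A,A'}\mathsf{OS}(\id)$ are homotopy equivalent to the respective identity $DA$-bimodules $\M_{DA}(\id)$ over $A$ and over $A'$. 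Concretely, I would take the candidate quasi-inverse to be generated by the same complementary idempotent pairs $\mathbf{K_x} \leftrightarrow (\mathbf{I_{\overline x}} \otimes \mathbf{I_x})$, with differential given by the analogous algebra element $a' = \sum_i (R'_i \otimes L_i + L'_i \otimes R_i) - \sum_i E_i \otimes U_i$ in $A' \otimes A$.

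First I would set up the box tensor product $\mathcal{C} := {}^{A,A'}\mathsf{OS}(\id) \boxtimes_{A'} {}^{A',A}\mathsf{OS}(\id)$ and compute its generators: these are pairs of complementary idempotents that agree, so the underlying module is $\bigoplus_{\mathbf{x}} \mathbf{I_x} \otimes \mathbf{I_x}$-type generators, matching the generators of $\M_{DD}(\id)$ over $A-A$. Next I would compute the induced differential $\delta^1$ on $\mathcal{C}$; this is where the $E_i^2 = 0$, $R'_i L'_i = R_i L_i$, $L'_i R'_i = L_i R_i$ relations and the anticommutation of primed generators enter. The resulting differential will not vanish, but it will be a sum of terms, each of which pairs up a generator with another via an acyclic-looking piece; the key computation is that $a \cdot a'$ (appropriately interpreted through the box-tensor contraction) telescopes so that, after a change of basis, $\delta^1$ becomes upper-triangular with an invertible diagonal on a complement of the ``diagonal'' generators. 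Then homological perturbation / the cancellation lemma for $DD$-bimodules collapses $\mathcal{C}$ onto $\M_{DD}(\id)$. The symmetric computation handles the product in the other order.

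The main obstacle I anticipate is the bookkeeping of signs: the element $a$ mixes the commuting generators $R_i, L_i$ of $A$ with the anticommuting primed generators $R'_i, L'_i$ of $A'$, and the Koszul-type term $-\sum U_i \otimes E_i$ with $E_i^2 = 0$, $\partial E_i = U_i$ is precisely engineered so that $(\delta^1)^2 = 0$ holds for $\mathsf{OS}_{DD}(\id)$ itself (this is essentially the content of the cited lemma's hypothesis); verifying that the composite $\delta^1$ on the box tensor product is genuinely acyclic-relative-to-the-diagonal requires tracking these exterior signs through the contraction maps, and a single sign error makes the cancellation fail. I would organize this by working idempotent-block by idempotent-block, using the isomorphism $\phi^{\mathbf{x},\mathbf{y}}$ to reduce each block to a polynomial computation, and checking the cancellation first in the rank-one blocks $|v^{\mathbf{x}} - v^{\mathbf{y}}|$ small before assembling the general case. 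A secondary subtlety is confirming that the contraction produces a genuine homotopy equivalence of $DD$-bimodules (not merely a quasi-isomorphism), but since everything is free over the ground ring $\mathbb{Q}$ and finitely generated in each graded piece, the standard homological perturbation lemma applies and upgrades the cancellation to a homotopy equivalence automatically.
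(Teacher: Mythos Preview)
The paper does not prove this lemma at all: it is stated with a citation to \cite{ozsvath2017bordered} and used as a black box. So there is no proof in the paper to compare your proposal against; any self-contained argument you give is necessarily going beyond what the paper does.

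That said, your sketch contains a genuine type error that would prevent it from working as written. The quasi-inverse of a $DD$-bimodule ${}^{A,A'}\mathsf{OS}_{DD}(\id)$ is not another $DD$-bimodule but an $AA$-bimodule ${}_{A'}\mathsf{OS}_{AA}(\id)_{A}$. Box tensor product pairs a Type~$D$ side with a Type~$A$ side; two Type~$D$ sides cannot be contracted against each other, so the expression ${}^{A,A'}\mathsf{OS}(\id) \boxtimes_{A'} {}^{A',A}\mathsf{OS}(\id)$ you wrote is not well-formed. The correct statement of quasi-invertibility is that there exists an $AA$-bimodule $Y$ with
\[
{}^{A,A'}\mathsf{OS}_{DD}(\id) \boxtimes_{A'} {}_{A'}Y_{A} \simeq {}^{A}[\Id]_{A}
\quad\text{and}\quad
{}_{A'}Y_{A} \boxtimes_{A} {}^{A,A'}\mathsf{OS}_{DD}(\id) \simeq {}_{A'}[\Id]^{A'},
\]
i.e.\ the tensor products recover the identity $DA$- and $AD$-bimodules. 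In the Ozsv\'ath--Szab\'o setting this is an instance of Koszul duality between $A$ and $A'$, and the candidate $AA$-bimodule is built from the algebra itself (a bar-type construction), not from another rank-one module on complementary idempotents with a $\delta^1$ of the same shape as $a$. Your differential $a' = \sum (R'_i \otimes L_i + L'_i \otimes R_i) - \sum E_i \otimes U_i$ is the right algebra element, but it defines the same $DD$-bimodule read from the other side, not its inverse. Once you replace the candidate by the correct $AA$-bimodule, the rest of your outline (compute the tensor product, cancel, invoke homological perturbation) is the standard route, and is essentially what is carried out in the cited reference.
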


\subsection{The Ozsv\'{a}th-Szab\'{o} $DA$ bimodule for a maximum}

Let $\wedge_{c}$ denote the $(2n+2, 2n)$ tangle which has a maximum between strands $c$ and $c+1$ and is the identity on the remaining strands (see Figure \ref{maxc}). The corresponding algebras are $A_{1}=A(n,k)$, $A_{2}=A(n+1, k+1)$. Define $\phi_{c}: \{1,...,2n\} \to \{1,..,2n+2\} $ by 
\[ 
\phi_{c}(j) = \begin{cases}
j  & \text{ if }j<c+1 \\
j+2 & \text{ if }j \ge c+1 .\\
\end{cases}
\]

\begin{figure}[ht]
\centering
\def\svgwidth{12cm} \scriptsize
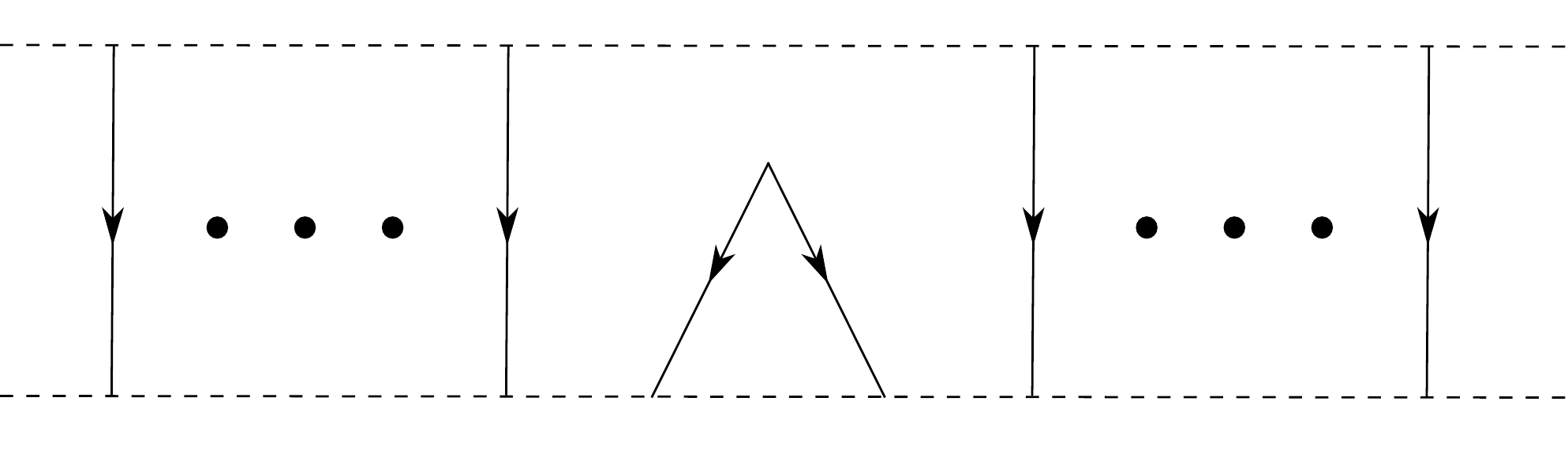
\caption{The diagram for the maximum $\wedge_{c}$.} \label{maxc}
\end{figure}

Recall that an idempotent state $\mathbf{y}\in\mathbf{I}(A_2)$ is called \emph{allowed} if 
\[c+1\in\mathbf{y}\quad\quad\text{and}\quad\quad |\mathbf{y}\cap\{c,c+2\}|\le 1.\]
Each allowed idempotent state $\mathbf{I_y} \in A_{2}$ determines a unique idempotent state $\mathbf{I_x} \in A_{1}$ by
\[
\mathbf{x} = \psi(\mathbf{y}) = \begin{cases}
\phi_{c}^{-1}(\mathbf{y}) & \text{if } c+2 \notin \mathbf{y} \\
\phi_{c}^{-1}(\mathbf{y}) \cup \{c\} & \text{if }c+2 \in \mathbf{y}\\
\end{cases}
\]

\begin{definition}

For each allowed idempotent state $\mathbf{y} \in \mathbf{I}(A_2)$, define $\mathbf{Q_y}$ to be the generator of $\mathsf{OS}_{DA}(\wedge_c)$ with left idempotent $\mathbf{y}$ and right idempotent $\psi(\mathbf{y})$
\[ \mathbf{Q_y} = \mathbf{I_y} \cdot \mathbf{ Q_{y}} \cdot \mathbf{I}_{\psi(\mathbf{y})} .\] 

\end{definition}

The generators of $\mathsf{OS}_{DA}(\wedge_c)$ decompose into three types:
\[
\begin{split}
 \mathbf{X} =& \bigoplus_{ \{ \mathbf{y} \mid \mathbf{y} \cap \{c, c+1, c+2\}\} = \{c, c+1\} } \mathbf{Q_y}  \\
 \mathbf{Y} =& \bigoplus_{ \{ \mathbf{y} \mid \mathbf{y} \cap \{c, c+1, c+2\}\} = \{c+1, c+2\} } \mathbf{Q_y}  \\
 \mathbf{Z} =& \bigoplus_{ \{ \mathbf{y} \mid \mathbf{y} \cap \{c, c+1, c+2\}\} = \{c+1\} } \mathbf{Q_y}  
\end{split}
\]

\noindent
As a $\Z$-module, $\mathsf{OS}_{DA}(\wedge_c)$ is the direct sum 
\[\mathsf{OS}_{DA}(\wedge_c) = \mathbf{X} \oplus \mathbf{Y} \oplus \mathbf{Z}  \]
The differential $\delta^1_1$ is given by 
\[ 
\begin{split}
\delta^1_1(\mathbf{X}) &= R_{c+1}R_{c} \otimes \mathbf{Y}   \\
\delta^1_1(\mathbf{Y}) &= L_{c}L_{c+1} \otimes \mathbf{X}  \\
 \delta^1_1(\mathbf{Z}) &= 0   \\
 \end{split}
 \]
 A concise description of $\delta^1_2$ requires the map $\Phi_{\mathbf{x}}$ described in \cite{ozsvath2018kauffman}, Lemma 8.1:
 \begin{lemma}
 
  If $\mathbf{x}$ is an allowed idempotent state for $A_2$ and $\mathbf{y}$ is an idempotent state for $A_1$ so that $\psi(\mathbf{x})$ and $\mathbf{y}$ are close enough, then there is an allowed idempotent state $\mathbf{z}$ with $\psi(\mathbf{z}) = \mathbf{y}$ so that there is a map
  \[
  \Phi_{\mathbf{x}}: \mathbf{I}_{\psi(\mathbf{x})} \cdot A_{1} \cdot \mathbf{I_y} \to \mathbf{I_x} \cdot A_2 \cdot \mathbf{I_z}
  \]
 with the following properties:
 \begin{itemize}
     \item $\Phi_{\mathbf{x}}$ maps the portion of $\mathbf{I}_{\psi(\mathbf{x})} \cdot A_{1} \cdot \mathbf{I_y}$ with weights $(v_{1},...,v_{2n})$ surjectively onto the portion of $\mathbf{I_x} \cdot A_2 \cdot \mathbf{I_z}$ with weights $w_{\phi_c(i)} = v_i$ and $w_{c+1}=w_{c+2}=0$.
     \item For any $a$ in $\mathbf{I}_{\psi(\mathbf{x})} \cdot A_{1} \cdot \mathbf{I_y}$, $\Phi_{\mathbf{x}}(U_{i} \cdot a) = U_{\phi_{c}(i)} \cdot \Phi_{\mathbf{x}}( a)$.
 \end{itemize}
 
 \noindent
 Moreover the state $\mathbf{z}$ is uniquely determined by the existence of such a $\Phi_{\mathbf{x}}$.
 \end{lemma}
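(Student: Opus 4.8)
The plan is to pin down the state $\mathbf{z}$ combinatorially and then define $\Phi_{\mathbf{x}}$ on the canonical generator, extending it $\ZZ[U_{1},\ldots,U_{2n}]$-equivariantly. Recall that whenever idempotent states $\mathbf{a},\mathbf{b}$ are close enough, the factor $\mathbf{I_a}\cdot A_{1}\cdot\mathbf{I_b}$ is a cyclic $\ZZ[U_{1},\ldots,U_{2n}]$-module: it is the image of $\ZZ[U_{1},\ldots,U_{2n}]$ under $\phi^{\mathbf{a},\mathbf{b}}$, generated by the minimal weight element $\phi^{\mathbf{a},\mathbf{b}}(1)$ of weight $w^{\mathbf{a},\mathbf{b}}$, and presented by exactly the relations that pass from $B_{0}(2n,k)$ to $B(2n,k)$ (vanishing of $U_{i}$ on idempotents missing both slots $i,i+1$, together with $R_{i}R_{i+1}=0$ and $L_{i+1}L_{i}=0$); the analogous description holds for $A_{2}$ on $2n+2$ slots. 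Thus it is enough to (i) produce the unique allowed $\mathbf{z}$ with $\psi(\mathbf{z})=\mathbf{y}$ for which $w^{\mathbf{x},\mathbf{z}}_{c+1}=w^{\mathbf{x},\mathbf{z}}_{c+2}=0$ and $w^{\mathbf{x},\mathbf{z}}_{\phi_{c}(i)}=w^{\psi(\mathbf{x}),\mathbf{y}}_{i}$ for all $i$, and then (ii) check that the $\ZZ[U]$-equivariant assignment sending $\phi^{\psi(\mathbf{x}),\mathbf{y}}(1)$ to the element of $\mathbf{I_x}\cdot A_{2}\cdot\mathbf{I_z}$ of weight $w_{\phi_{c}(i)}=w^{\psi(\mathbf{x}),\mathbf{y}}_{i}$, $w_{c+1}=w_{c+2}=0$, with $U_{i}$ acting through $U_{\phi_{c}(i)}$, is well defined. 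The two bulleted properties and the uniqueness of $\mathbf{z}$ then follow quickly.

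For (i), use that $\mathbf{x}$ is allowed, so $c+1\in\mathbf{x}$ and exactly one of $\mathbf{x}\in\mathbf{X}$, $\mathbf{x}\in\mathbf{Y}$, $\mathbf{x}\in\mathbf{Z}$ holds, according to whether $\mathbf{x}\cap\{c,c+1,c+2\}$ equals $\{c,c+1\}$, $\{c+1,c+2\}$, or $\{c+1\}$. Away from $\{c,c+1,c+2\}$ the condition $\psi(\mathbf{z})=\mathbf{y}$ forces $\mathbf{z}$ to agree with $\phi_{c}(\mathbf{y})$, and the equalities $v^{\mathbf{z}}_{c+1}=v^{\mathbf{x}}_{c+1}$, $v^{\mathbf{z}}_{c+2}=v^{\mathbf{x}}_{c+2}$ together with $\psi(\mathbf{z})=\mathbf{y}$ determine $\mathbf{z}\cap\{c,c+1,c+2\}$ in each of the three cases; here the hypothesis that $\psi(\mathbf{x})$ and $\mathbf{y}$ are close enough is exactly what makes the resulting $\mathbf{z}$ allowed and close enough to $\mathbf{x}$. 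The identity $w^{\mathbf{x},\mathbf{z}}_{\phi_{c}(i)}=w^{\psi(\mathbf{x}),\mathbf{y}}_{i}$ is then a direct computation from $w^{\mathbf{a},\mathbf{b}}_{i}=\tfrac12|v^{\mathbf{a}}_{i}-v^{\mathbf{b}}_{i}|$: the definition of $\psi$ relates $v^{\psi(\mathbf{x})}_{i}$ to $v^{\mathbf{x}}_{\phi_{c}(i)}$ by a correction supported near the slot $c$, which is matched term by term by the corresponding correction relating $v^{\mathbf{y}}_{i}$ to $v^{\mathbf{z}}_{\phi_{c}(i)}$. Uniqueness of $\mathbf{z}$ is then formal: any $\mathbf{z}'$ admitting a weight-matching $U$-equivariant surjection of the stated shape must satisfy the same three families of equalities, hence equals $\mathbf{z}$.

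For (ii), the source and the target are both cyclic $\ZZ[U]$-modules presented by ``the same'' relations under the correspondence $i\leftrightarrow\phi_{c}(i)$ away from the cap, so the only substantive checks are local near the slots $c,c+1,c+2$: one verifies, case by case in $\mathbf{X},\mathbf{Y},\mathbf{Z}$ and using the shapes of the intermediate idempotents that arise when multiplying by the $U_{j}$, that a monomial killed in $\mathbf{I}_{\psi(\mathbf{x})}\cdot A_{1}\cdot\mathbf{I_y}$ has image killed in $\mathbf{I_x}\cdot A_{2}\cdot\mathbf{I_z}$, and conversely. This last (``conversely'') clause also gives surjectivity onto the asserted sub-portion: that sub-portion is spanned by the $\phi^{\mathbf{x},\mathbf{z}}(U^{t'})$ with $t'_{c+1}=t'_{c+2}=0$, and each nonzero such element is the image of $\phi^{\psi(\mathbf{x}),\mathbf{y}}(U^{t})$ with $t_{i}=t'_{\phi_{c}(i)}$, which is then nonzero in $A_{1}$. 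Finally, $\Phi_{\mathbf{x}}(U_{i}\cdot a)=U_{\phi_{c}(i)}\cdot\Phi_{\mathbf{x}}(a)$ holds by construction.

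The hard part is the finite but fiddly bookkeeping in (ii): checking that the relations distinguishing $B(2n,k)$ from $B_{0}(2n,k)$, and their $(2n+2)$-strand analogues, transport correctly across $\phi_{c}$ in the cap region in each of the cases $\mathbf{X},\mathbf{Y},\mathbf{Z}$, so that no monomial is spuriously annihilated in $A_{2}$ without being annihilated in $A_{1}$ (which would break well-definedness or surjectivity) and none survives in $A_{2}$ without a preimage. This is the one place where the precise truncated definitions of the algebras, and the ``allowed'' and ``close enough'' hypotheses, genuinely enter.
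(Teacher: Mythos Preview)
The paper does not prove this lemma at all: it is quoted as Lemma~8.1 of \cite{ozsvath2018kauffman} and used as background to describe the $\delta^1_2$ map on $\mathsf{OS}_{DA}(\wedge_c)$. So there is no proof in the present paper to compare your proposal against.

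That said, your outline is the natural one and matches the structure of the original argument in Ozsv\'ath--Szab\'o: recognize each factor $\mathbf{I_a}\cdot B(m,k)\cdot\mathbf{I_b}$ as a cyclic $\ZZ[U_1,\ldots,U_m]$-module (a quotient of $\mathbf{I_a}\cdot B_0(m,k)\cdot\mathbf{I_b}\cong\ZZ[U_1,\ldots,U_m]$), pin down $\mathbf{z}$ by forcing $v^{\mathbf{z}}_{c+1}=v^{\mathbf{x}}_{c+1}$ and $v^{\mathbf{z}}_{c+2}=v^{\mathbf{x}}_{c+2}$, and send generator to generator. Your case split on $\mathbf{X},\mathbf{Y},\mathbf{Z}$ is exactly how the original proof organizes the determination of $\mathbf{z}$. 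Be aware, though, that what you have written is a plan rather than a proof: the sentences ``one verifies, case by case'' and ``a direct computation'' are precisely where the content lives, and you have not actually carried out the check that the defining relations of $B(2n,k)$ (namely $U_i\cdot\mathbf{I_x}=0$ when $\{i,i+1\}\cap\mathbf{x}=\emptyset$) transport correctly through $\phi_c$ near the cap. If you want a self-contained argument you should at least exhibit, for each of the three types $\mathbf{X},\mathbf{Y},\mathbf{Z}$, the explicit $\mathbf{z}$ and verify the weight identity $w^{\mathbf{x},\mathbf{z}}_{\phi_c(i)}=w^{\psi(\mathbf{x}),\mathbf{y}}_i$ at $i=c$, since that is the only index where $\phi_c$ and $\psi$ interact nontrivially.
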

 
  The multiplication map $\delta^1_2$ essentially consists of pushing the multiplication in $A_1$ to multiplication in $A_2$ by $\Phi_{\mathbf{x}}$. In particular, if $\mathbf{x}$ is an allowed idempotent state for $A_2$ and $a=\mathbf{I}_{\psi(\mathbf{x})} \cdot a \cdot \mathbf{I_y}$ is a non-zero algebra element in $A_1$, then 
  \begin{equation} \label{capmultiplication}
  \delta^1_2(\mathbf{Q_x}, a) = \Phi_{\mathbf{x}}(a) \otimes \mathbf{Q_z}
 \end{equation}
  There are no higher multiplication maps $\delta^1_i$ for $i>2$.
  
 \subsection{The Ozsv\'{a}th-Szab\'{o} type D module for the plat maximum $\wedge(n)$}
 
 Consider the plat maximum diagram consisting of $n$ caps in Figure \ref{NCaps}. We will denote this diagram by $\wedge(n)$. The type D module that Ozsv\'{a}th and Szab\'{o} assign to $\wedge(n)$ is defined over $A=A(n,n)$.

 \begin{figure}[ht]
\centering
\def\svgwidth{12cm} \scriptsize
\input{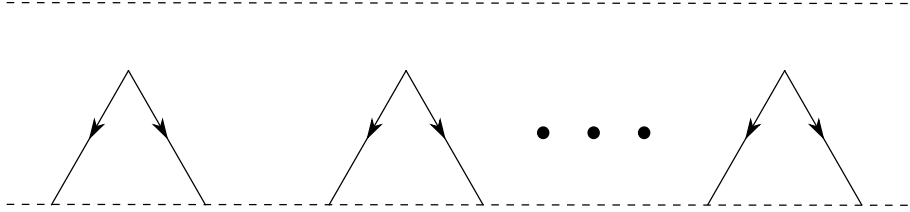}
\caption{The diagram $\wedge(n)$ for $n$ downward-oriented caps.}\label{NCaps}
\end{figure}

Suppose that we order the maxima from left to right so that the left maximum is lowest in the diagram and the right maximum is highest. Then at each stage, the maximum is leftmost in the diagram, so it is given by $\wedge_1$ and the corresponding bimodule is $\mathsf{OS}_{DA}(\wedge_1)$. With this convention, we can write 
\[ \wedge(n) = \underbrace{\wedge_1 \ast \wedge_1 \ast ... \ast \wedge_1}_{n \text{ times } }\]
\[ \mathsf{OS}_{D}(\wedge(n)) = \underbrace{ \mathsf{OS}_{DA}(\wedge_1) \boxtimes \cdot \cdot \cdot \boxtimes \mathsf{OS}_{DA}(\wedge_1) \boxtimes \mathsf{OS}_{D}(\wedge_1)}_{n \text{ times } }\]
where we are suppressing the number of strands in each tangle. Note that the final cap gives a type D module instead of a $DA$ bimodule because there are no incoming strands, and $A(0,0)=\mathbb{Q}$.

Iterating $\psi$, each idempotent state $\mathbf{y}$ in $\mathbf{I}(A_2)$ gives an idempotent state $\y_i$ in $\mathbf{I}(A(n-i,n-i))$ for $1\le i\le n-1$. The only idempotent state for which $\y_i$ is allowable for every $i$ is  $\mathbf{x}_{even}=\{2,4,...,2n\}$, as in Figure \ref{NCapsGenerator}. Thus, $Z\boxtimes ...\boxtimes Z$ is the only generator that survives in the tensor product 
\[ \mathsf{OS}_{D}(\wedge(n)) =  \mathsf{OS}_{DA}(\wedge_1) \boxtimes \cdot \cdot \cdot \boxtimes \mathsf{OS}_{DA}(\wedge_1) \boxtimes \mathsf{OS}_{D}(\wedge_1).\]
We denote this generator by $\mathbf{Z}$. Note that $\delta^1(\mathbf{Z})=0$.

%
%Of the three types of generators $\mathbf{X},\mathbf{Y},\mathbf{Z}$, only $\mathbf{Z}$ corresponds to an allowed idempotent in $\mathsf{OS}_{DA}(\wedge_1)$. Thus the $DA$ bimodule $\mathsf{OS}_{DA}(\wedge_1)$ is generated by $\mathbf{Z}$ with $\delta^1_1=0$ and $\delta^1_2$ given by equation \ref{capmultiplication}.

 \begin{figure}[ht]
\centering
\def\svgwidth{12cm} \scriptsize
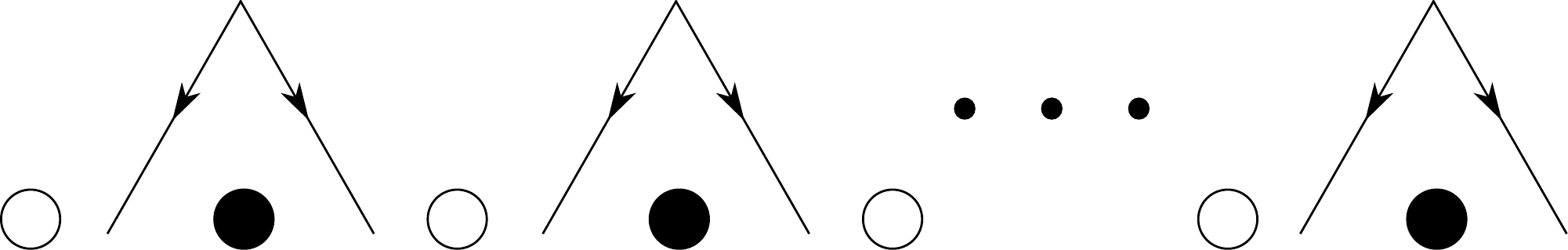
\caption{The allowed idempotent $\mathbf{x}_{even}$.}\label{NCapsGenerator}
\end{figure}

%Define $\mathbf{x}_{even}$ to be the idempotent of type $\mathbf{Z}$ at each maximum, $\mathbf{x}_{even}=\{2,4,...,2n\}$, as in Figure \ref{NCapsGenerator}. Only the idempotent $\mathbf{I}_{\mathbf{x}_{even}}$ survives in the tensor product 
%\[ \underbrace{ \mathsf{OS}_{DA}(\wedge_1) \boxtimes \mathsf{OS}_{DA}(\wedge_1) \boxtimes ... \boxtimes \mathsf{OS}_{DA}(\wedge_1)}_{n \text{ times } } . \]

\noindent
Thus, we have the following lemma:

\begin{lemma}
There is an isomorphism $\mathsf{OS}_{DA}(\wedge(n)) \cong A  \cdot \mathbf{I}_{\mathbf{x}_{even}} $, where $A=A(n,n)$.
\end{lemma}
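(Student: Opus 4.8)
The plan is to deduce the lemma from the iterated box‑tensor decomposition
\[ \mathsf{OS}_D(\wedge(n)) = \mathsf{OS}_{DA}(\wedge_1)\boxtimes\cdots\boxtimes\mathsf{OS}_{DA}(\wedge_1)\boxtimes\mathsf{OS}_D(\wedge_1) \]
by assembling the plat maximum one cap at a time, starting from the topmost cap. I would prove by induction on the number $j$ of caps included that the type D module for the top $j$ caps is free over $\mathbb{Q}$ on a single generator $\mathbf{Z}^{(j)}$, lying in the $\mathbf{Z}$-sector of its bottom-most tensor factor, with left idempotent $\mathbf{I}_{\{2,4,\dots,2j\}}$ and $\delta^1(\mathbf{Z}^{(j)})=0$. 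The lemma is then the case $j=n$.

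For the base case $j=1$ one inspects $\mathsf{OS}_D(\wedge_1)$ directly: over $A(1,1)$ no singleton idempotent meets $\{c,c+1,c+2\}$ in two points, so the $\mathbf{X}$ and $\mathbf{Y}$ sectors are empty and only $\mathbf{Q}_{\{2\}}$ remains, with $\delta^1_1=0$ by definition. For the inductive step, box-tensoring the next $\mathsf{OS}_{DA}(\wedge_1)$ (now on two more strands) with $\mathbb{Q}\langle\mathbf{Z}^{(j)}\rangle$ over the idempotent ring keeps exactly those generators $\mathbf{Q_y}$ whose right idempotent $\psi(\mathbf{y})$ equals $\{2,4,\dots,2j\}$; unwinding the definitions of $\phi_c$, of the allowability condition, and of $\psi$ shows the unique allowed such $\mathbf{y}$ is $\{2,4,\dots,2(j+1)\}$, which lies in the $\mathbf{Z}$-sector. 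Hence exactly one generator survives, with the predicted idempotent. Its differential vanishes: since $\delta^1$ is identically zero on $\mathbb{Q}\langle\mathbf{Z}^{(j)}\rangle$, the box-tensor differential on $\mathbf{Z}^{(j+1)}\boxtimes\mathbf{Z}^{(j)}$ collapses to $\delta^1_1(\mathbf{Z}^{(j+1)})\otimes\mathbf{Z}^{(j)}$, and $\delta^1_1$ kills $\mathbf{Z}$-sector generators of $\mathsf{OS}_{DA}(\wedge_1)$. This completes the induction and, in passing, fills in the details of the assertions made just before the lemma.

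Finally I would translate the resulting type D structure into the stated module isomorphism. A (curved) type D structure over $A=A(n,n)$ that is free over $\mathbb{Q}$ on a single generator $\mathbf{Z}$ at idempotent $\mathbf{I}_{\mathbf{x}_{even}}$ with $\delta^1(\mathbf{Z})=0$ corresponds, under the standard dictionary between type D structures and left $A$-modules, to $A\otimes_{\mathbf{I}(A)}\mathbb{Q}\langle\mathbf{Z}\rangle$ with induced differential $a\otimes\mathbf{Z}\mapsto\mu_1(a)\otimes\mathbf{Z}=0$; as a left $A$-module this is exactly the left ideal $A\cdot\mathbf{I}_{\mathbf{x}_{even}}$, with left multiplication $b\cdot(a\otimes\mathbf{Z})=ba\otimes\mathbf{Z}$, and the curvature $\omega$ (acting on both sides as multiplication by the central element $\sum U_iU_j$) is intertwined by this identification. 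This gives $\mathsf{OS}_D(\wedge(n))\cong A\cdot\mathbf{I}_{\mathbf{x}_{even}}$.

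I expect the main obstacle to be the combinatorial core of the inductive step: showing that $\{2,4,\dots,2(j+1)\}$ is the \emph{unique} allowed idempotent $\mathbf{y}$ with $\psi(\mathbf{y})=\{2,4,\dots,2j\}$. This calls for careful bookkeeping of which two points lie outside the image of $\phi_c$, of whether $c+2\in\mathbf{y}$ (which selects between the two branches in the definition of $\psi$), and of the allowability constraints $c+1\in\mathbf{y}$ and $|\mathbf{y}\cap\{c,c+2\}|\le 1$. Everything downstream — the collapse of the box-tensor differential and the passage to the left ideal — is purely formal.
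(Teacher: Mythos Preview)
Your proposal is correct and follows essentially the same approach as the paper: the paper's argument is the paragraph immediately preceding the lemma, which asserts (without detailed verification) that iterating $\psi$ through the box-tensor decomposition leaves only the single generator $\mathbf{Z}=Z\boxtimes\cdots\boxtimes Z$ at idempotent $\mathbf{x}_{even}$ with $\delta^1(\mathbf{Z})=0$. Your induction simply makes this explicit, and the case analysis you anticipate in the final paragraph (checking that $\{2,4,\dots,2(j+1)\}$ is the unique allowed $\mathbf{y}$ with $\psi(\mathbf{y})=\{2,4,\dots,2j\}$, via the two branches of $\psi$ according to whether $c+2\in\mathbf{y}$) is exactly the missing detail the paper glosses over.
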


\subsection{The Ozsv\'{a}th-Szab\'{o} $DD$-bimodule for a positive crossing}
Let $\sigma_{i}$ denote the elementary braid on $2n$ strands with a positive crossing between strands $i$ and $i+1$, and let $A=A(n,k)$, $A'=A'(n,2n-k)$. Ozsv\'{a}th and Szab\'{o} define a $DA$ bimodule ${}^{A}\mathsf{OS}_{DA}(\sigma_{i})_{A}$ as well as a $DD$-bimodule ${}^{A,A'}\mathsf{OS}_{DD}(\sigma_{i})$. They are related via box tensor product with the identity $DD$-bimodule:
\[ {}^{A}\mathsf{OS}_{DA}(\sigma_{i})_{A} \hspace{1mm} \boxtimes \hspace{1mm} {}^{A,A'}\mathsf{OS}_{DD}(\id) \cong {}^{A,A'}\mathsf{OS}_{DD}(\sigma_{i})  \] 

We will show that bimodules for crossings are related to the Ozsv\'{a}th-Szab\'{o} bimodules for crossings via homological perturbation, which is much easier to describe for $DD$-bimodules than for $DA$ bimodules. Since ${}^{A,A'}\mathsf{OS}_{DD}(\id)$ is quasi-invertible, showing that our $DD$-bimodule is homotopy equivalent to ${}^{A,A'}\mathsf{OS}_{DD}(\sigma_{i})$ will imply that our $DA$ bimodule is homotopy equivalent to ${}^{A}\mathsf{OS}_{DA}(\sigma_{i})_{A}$. For this reason, we will only give the definition of the $DD$-bimodule ${}^{A,A'}\mathsf{OS}_{DD}(\sigma_{i})$.

Let $\mathbf{x}$ be an idempotent state in $A$ and $\mathbf{y}$ an idempotent state in $A'$. The $DD$-bimodule ${}^{A,A'}\mathsf{OS}_{DD}(\sigma_{i})$ is generated by pairs 
$\mathbf{I_x} \otimes \mathbf{I_y}$ such that either

\begin{enumerate}
    \item $\mathbf{x} \cap \mathbf{y} = \emptyset$, or
    \item $\mathbf{x} \cap \mathbf{y} = \{i+1\}$ \quad and \quad $\mathbf{\overline{x}} \cap \mathbf{\overline{y}} = \{i\}$ or $\{i+2\}$.
\end{enumerate}

\noindent
These pairs can be conveniently grouped into four main types $N, S, E, W$, described below:

\[
\begin{split}
     &\mathbf{N} = \sum_{\substack{ \mathbf{x} \cap \mathbf{y} = \emptyset \\ i+1 \in \mathbf{x}}} \mathbf{I_x} \otimes \mathbf{I_y} \\ 
     &\mathbf{S} = \sum_{\substack{ \mathbf{x} \cap \mathbf{y} = \emptyset \\ i+1 \in \mathbf{y}}} \mathbf{I_x} \otimes \mathbf{I_y} \\
     &\mathbf{E} = \sum_{\substack{ \mathbf{x} \cap \mathbf{y} =\{i+1\} \\ \mathbf{\overline{x}} \cap \mathbf{\overline{y}} = \{i+2\} \\}} \mathbf{I_x} \otimes \mathbf{I_y} \\
     &\mathbf{W} = \sum_{\substack{ \mathbf{x} \cap \mathbf{y} =\{i+1\} \\ \mathbf{\overline{x}} \cap \mathbf{\overline{y}} = \{i\} \\}} \mathbf{I_x} \otimes \mathbf{I_y} \\
\end{split}
\]

Define $\tau_i: [2n] \to [2n]$ to be the transposition that switches $i$ with $i+1$ (and is identity for $j \ne i,i+1$). The differential consists of three types of terms:

\begin{enumerate}%[label=(P-\arabic*),ref=(P-\arabic*)]
\item 
  $R_j\otimes L_j$
  and $L_j\otimes R_j$ for all $j\in [2n]\setminus \{i,i+1\}$; these connect
  generators of the same type. 
\item
  $-U_{j}\otimes E_{\tau(j)}$ for all $j\in [2n]$; these connect generators of the same type.
\item 
  Terms in the diagram below connect  generators
  of different types:
  \begin{equation*}
    \label{eq:PositiveCrossing}
    \begin{tikzpicture}[scale=2.7]
    \node at (0,3) (N) {$\n$} ;
    \node at (-2,2) (W) {$\w$} ;
    \node at (2,2) (E) {$\e$} ;
    \node at (0,1) (S) {$\s$} ;
    \draw[->] (S) [bend left=7] to node[below,sloped] {\tiny{$-R_i\otimes U_{i+1}-L_{i+1}\otimes R_{i+1}'R_i'$}}  (W)  ;
    \draw[->] (W) [bend left=7] to node[above,sloped] {\tiny{$-L_{i}\otimes 1$}}  (S)  ;
    \draw[->] (E)[bend right=7] to node[above,sloped] {\tiny{$R_{i+1}\otimes 1$}}  (S)  ;
    \draw[->] (S)[bend right=7] to node[below,sloped] {\tiny{$L_{i+1}\otimes U_i + R_i \otimes L_{i}' L_{i+1}'$}} (E) ;
    \draw[->] (W)[bend right=7] to node[below,sloped] {\tiny{$1\otimes L_i'$}} (N) ;
    \draw[->] (N)[bend right=7] to node[above,sloped] {\tiny{$U_{i+1}\otimes R_i' + R_{i+1} R_i \otimes L_{i+1}'$}} (W) ;
    \draw[->] (E)[bend left=7] to node[below,sloped]{\tiny{$1\otimes R_{i+1}'$}} (N) ;
    \draw[->] (N)[bend left=7] to node[above,sloped]{\tiny{$U_{i}\otimes L_{i+1}' + L_{i} L_{i+1}\otimes R_i'$}} (E) ;
  \end{tikzpicture}
\end{equation*}
\end{enumerate}

\subsection{The Ozsv\'{a}th-Szab\'{o} $DD$-bimodule for a negative crossing}

Let $\sigma_{i}^{-1}$ be the elementary braid on $2n$ strands with a negative crossing between strands $i$ and $i+1$, and let $A=A(n,k)$, $A'=A'(n,2n-k)$. Ozsv\'{a}th and Szab\'{o} define both a $DA$ bimodule and a $DD$-bimodule for $\sigma_{i}^{-1}$, which are related via box tensor product with the identity $DD$-bimodule
\[ {}^{A}\mathsf{OS}_{DA}(\sigma^{-1}_{i})_{A} \hspace{1mm} \boxtimes \hspace{1mm} {}^{A,A'}\mathsf{OS}_{DD}(\id) \cong {}^{A,A'}\mathsf{OS}_{DD}(\sigma^{-1}_{i}).  \] 

As with the positive crossing, we will only describe the $DD$-bimodule ${}^{A,A'}\mathsf{OS}_{DD}(\sigma^{-1}_{i})$, since this is the version we will use to compare with our bimodule.

The $DD$-bimodule ${}^{A,A'}\mathsf{OS}_{DD}(\sigma^{-1}_{i})$ has the same generators as the positive crossing bimodule ${}^{A,A'}\mathsf{OS}_{DD}(\sigma_{i})$. The differential consists of three types of terms:

\begin{enumerate}%[label=(P-\arabic*),ref=(P-\arabic*)]
\item 
  $R_j\otimes L_j$
  and $L_j\otimes R_j$ for all $j\in [2n]\setminus \{i,i+1\}$; these connect
  generators of the same type. 
\item
  $-U_{j}\otimes E_{\tau(j)}$ for all $j\in [2n]$; these connect generators of the same type.
\item   Terms in the diagram below connect  generators
  of different types:
  \begin{equation*}
\begin{tikzpicture}[scale=2.7]
    \node at (0,3) (N) {$\n$} ;
    \node at (-2,2) (W) {$\w$} ;
    \node at (2,2) (E) {$\e$} ;
    \node at (0,1) (S) {$\s$} ;
    \draw[->] (W) [bend left=7] to node[above,sloped] {\tiny{${U_{i+1}}\otimes{L_i'}+{L_{i} L_{i+1}}\otimes{R_{i+1}'}$}}  (N)  ;
    \draw[->] (N) [bend left=7] to node[below,sloped] {\tiny{${1}\otimes{R_{i}'}$}}  (W)  ;
    \draw[->] (N)[bend right=7] to node[below,sloped] {\tiny{${1}\otimes{L_{i+1}'}$}}  (E)  ;
    \draw[->] (E)[bend right=7] to node[above,sloped] {\tiny{${U_i}\otimes{R_{i+1}'} + {R_{i+1} R_{i}}\otimes{L_i'}$}} (N) ;
    \draw[->] (S)[bend right=7] to node[above,sloped] {\tiny{$-{R_i}\otimes{1}$}} (W) ;
    \draw[->] (W)[bend right=7] to node[below,sloped] {\tiny{$-{L_i}\otimes{U_{i+1}} - {R_{i+1}}\otimes{L_{i}' L_{i+1}'}$}} (S) ;
    \draw[->] (S)[bend left=7] to node[above,sloped]{\tiny{${L_{i+1}}\otimes{1}$}} (E) ;
    \draw[->] (E)[bend left=7] to node[below,sloped]{\tiny{${R_{i+1}}\otimes{U_{i}} + {L_i}\otimes{R_{i+1}' R_{i}'}$}} (S) ;
  \end{tikzpicture}
  \end{equation*}
\end{enumerate}

\subsection{The Ozsv\'{a}th-Szab\'{o} $DA$ bimodule for a minimum}

Let $\vee_{c}$ denote the $(2n,2n+2)$ tangle which has a minimum between strands $c$ and $c+1$. The corresponding algebras are $A_{1}=A(n+1,k+1)$ and $A_{2}=A(n,k)$. Ozsv\'{a}th and Szab\'{o} only define the $DA$ bimodule for $\vee_{1}$, as any minimum can be moved to the left via Reidemeister II moves. Fortunately, in a plat diagram, the minima can be ordered so that each is leftmost in its elementary diagram (see Figures \ref{NCups2} and \ref{orderedcups}), so this case is sufficient for comparing to our construction.

 \begin{figure}[ht]
\centering
\def\svgwidth{10cm} \scriptsize
\input{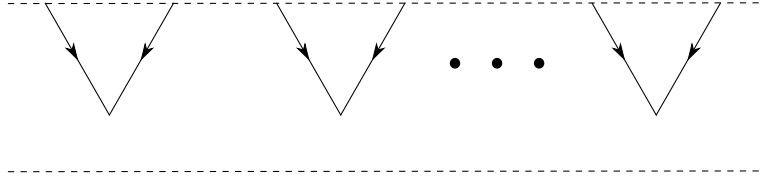}
\caption{The diagram $\vee(n)$ for the plat minimum.}\label{NCups2}
\end{figure}

 \begin{figure}[ht]
\centering
\def\svgwidth{10cm} \scriptsize
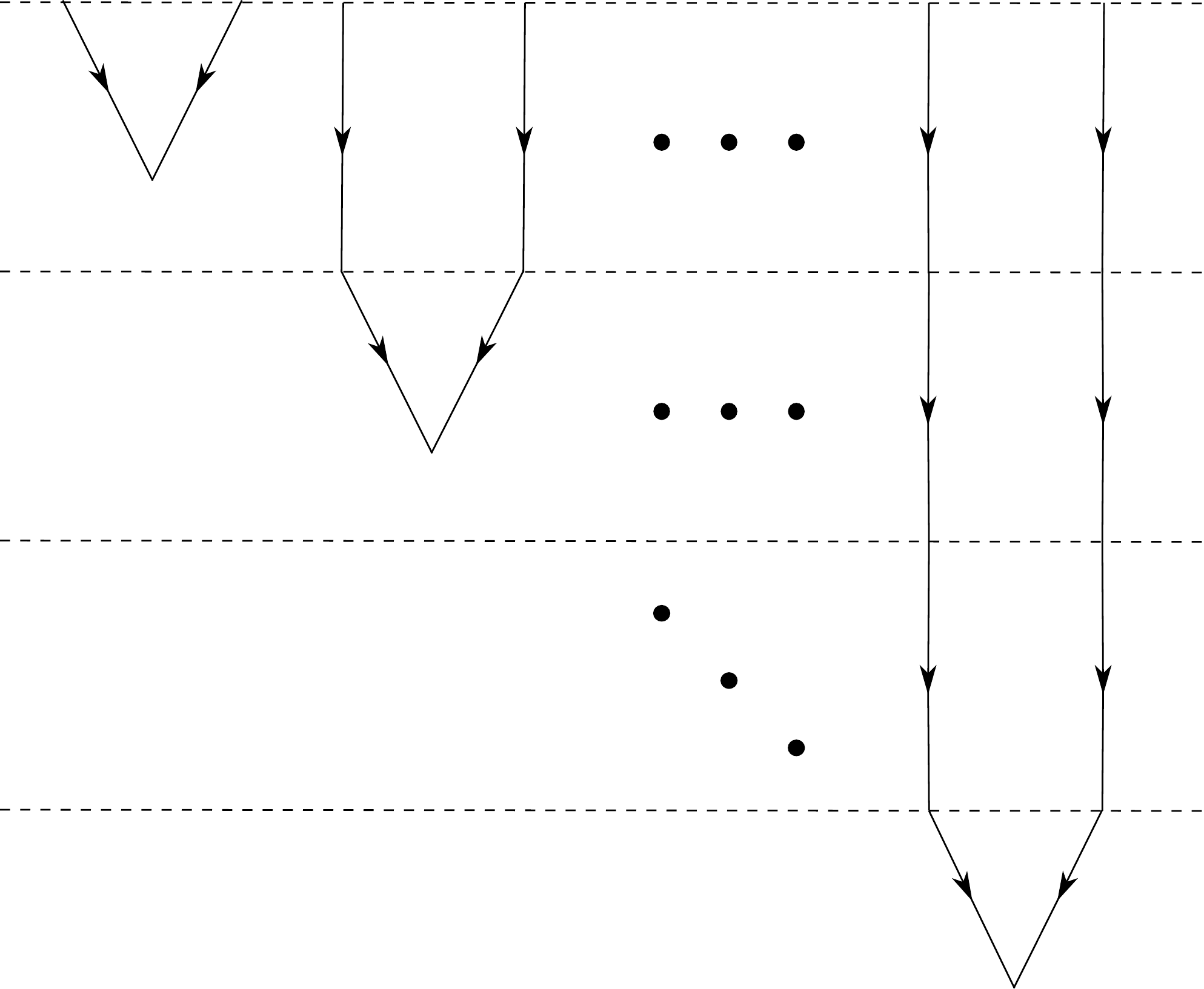
\caption{The plat minimum diagram with each minimum occurring leftmost in its slice.}\label{orderedcups}
\end{figure}

Ozsv\'{a}th and Szab\'{o} define two different bimodules for $\vee_{1}$, depending on whether or not $n=0$, i.e. they treat the absolute minimum differently from the other minima. This comes from the fact that a Kauffman states diagram requires a choice of marked edge in $D$, which they choose to be at the absolute minimum. The version that we give here will NOT treat the $n=0$ case as special -- we will use the same bimodule for the absolute minimum as for the relative minima. We expect that this modification should replace the knot Floer complex for $D$ with the knot Floer complex for $D \sqcup \text{unknot}$. This  typically what happens when one forgets about a decorated edge, as the decoration has been moved to the unknotted component.

We give the `alternative construction' of ${}^{A_2}\mathsf{OS}(\vee_{1})_{A_1}$ from \cite{ozsvath2018kauffman}, Section 9.2. Let 
\[\mathbf{I}=\sum_{\substack{1\notin\mathbf{x} \\ 2 \in \mathbf{x}}} \mathbf{I_x} . \] 

\noindent
There is an inclusion $\phi: A_2 \to \mathbf{I} \cdot A_1 \cdot \mathbf{I}$ which maps into the portion of $A_1$ with $w_1=w_2=0$. Concretely, $\phi(L_i)=\mathbf{I} \cdot L_{i+2}$ and $\phi(R_i)=\mathbf{I} \cdot R_{i+2}$. Thus, $\mathbf{I} \cdot A_1$ can be viewed as an $A_2 \text{--} A_1$-bimodule, where the left action is given by $m_{1|1|0}(a,b) = \phi(a)  b$ and the right action is just right multiplication $m_{0|1|1}(a,b) =a b$. 

With this convention, the $AA$-bimodule for $\vee_1$ is given by 
\[{}_{A_2}\mathsf{OS}_{AA}(\vee_{1})_{A_1} = \mathbf{I} \cdot A_1 /(L_1 L_2 \cdot A_{1}) \]

\subsection{The Ozsv\'{a}th-Szab\'{o} type A module for the plat minimum $\vee(n)$}
Consider the plat minimum diagram consisting of $n$ cups in Figure \ref{NCups2}. We will denote this diagram $\vee(n)$. The type A module $\mathsf{OS}(\vee(n))$ is defined over $A=A(n,n)$.

Suppose we order the minima from left to right so that the left minimum is highest in the diagram and the right minimum is lowest. Then 
\[ \vee(n) = \underbrace{\vee_1 \ast \vee_1 \ast ... \ast \vee_1}_{n \text{ times } }\] and 
\[ \mathsf{OS}_{A}(\vee(n)) = \underbrace{ \mathsf{OS}_{A}(\vee_1) \boxtimes \mathsf{OS}_{DA}(\vee_1) \boxtimes ... \boxtimes \mathsf{OS}_{DA}(\vee_1)}_{n \text{ times } }\]
where for simplicity, we are suppressing the number of strands in each tangle from the notation.

As in the plat maximum diagram, this box tensor product picks out the idempotent $\mathbf{I}_{\mathbf{x}_{even}}$, where $\mathbf{x}_{even}=\{2,4,...,2n\}$. 

\begin{lemma}

There is an isomorphism $\mathsf{OS}_{A}(\vee(n)) \cong \mathbf{I}_{\mathbf{x}_{even}} \cdot A$, where $A=A(n,n)$.

\end{lemma}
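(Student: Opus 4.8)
The plan is to compute the iterated box tensor product
\[ \mathsf{OS}_{A}(\vee(n)) = \mathsf{OS}_{A}(\vee_1) \boxtimes \mathsf{OS}_{DA}(\vee_1) \boxtimes \cdots \boxtimes \mathsf{OS}_{DA}(\vee_1) \]
by induction on the number of cups, in exact parallel with the computation of $\mathsf{OS}_{D}(\wedge(n))$ above. For $1\le j\le n$ write $A_j=A(j,j)$ and let $\mathbf{x}^{(j)}_{even}=\{2,4,\dots,2j\}$ be the corresponding idempotent state of $A_j$ (so $\mathbf{x}^{(n)}_{even}=\mathbf{x}_{even}$), and let $\mathsf{OS}_{A}(\vee(j))$ denote the box tensor product of the first $j$ factors above, a type A module over $A_j$. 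The inductive claim is that $\mathsf{OS}_{A}(\vee(j))\cong\mathbf{I}_{\mathbf{x}^{(j)}_{even}}\cdot A_j$ as type A modules; the case $j=n$ is the lemma. Throughout we use the honest $AA$--bimodule presentation ${}_{A_{j}}\mathsf{OS}_{AA}(\vee_1)_{A_{j+1}}=\mathbf{I}\cdot A_{j+1}/(L_1L_2\cdot A_{j+1})$ of a cup from Section 9.2 of \cite{ozsvath2018kauffman} (this agrees with the $DA$--bimodule presentation used in the displayed formula, via the identity $DD$--bimodule, which is quasi-invertible); since both the running module and $\mathsf{OS}_{AA}(\vee_1)$ are genuine DG (bi)modules with no higher $A_\infty$ operations, each box tensor product is the underived tensor product $\otimes_{A_j}$ and is again genuine, so it suffices to identify the underlying right module.

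The base case $j=1$ is immediate: over $A_1=A(1,1)=B(2,1)$ we have $\mathbf{I}=\sum_{1\notin\mathbf{x},\,2\in\mathbf{x}}\mathbf{I_x}=\mathbf{I}_{\{2\}}$, and there are no index--$2$ generators, so the relation $L_1L_2\cdot A_1$ is trivial and $\mathsf{OS}_{A}(\vee_1)=\mathbf{I}_{\{2\}}\cdot A_1=\mathbf{I}_{\mathbf{x}^{(1)}_{even}}\cdot A_1$. For the inductive step, assume $\mathsf{OS}_{A}(\vee(j))\cong\mathbf{I}_{\mathbf{x}^{(j)}_{even}}\cdot A_j$. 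Using the standard identification $\mathbf{I}_{\mathbf{x}}\cdot A_j\otimes_{A_j}X\cong\mathbf{I}_{\mathbf{x}}\cdot X$ for a left $A_j$--module $X$, together with the fact that $A_j$ acts on $\mathbf{I}\cdot A_{j+1}$ through the shift homomorphism $\phi$ with $\phi(\mathbf{I_x})=\mathbf{I}_{\{2\}\cup(\mathbf{x}+2)}$ (which lands in the support of $\mathbf{I}$), and since $\phi(\mathbf{x}^{(j)}_{even})=\{2\}\cup\{4,6,\dots,2j+2\}=\mathbf{x}^{(j+1)}_{even}$, we obtain
\[ \mathsf{OS}_{A}(\vee(j+1)) \;\cong\; \mathbf{I}_{\mathbf{x}^{(j)}_{even}}\cdot\bigl(\mathbf{I}\cdot A_{j+1}/(L_1L_2\cdot A_{j+1})\bigr) \;=\; \mathbf{I}_{\mathbf{x}^{(j+1)}_{even}}\cdot A_{j+1}\,\big/\,\bigl(\mathbf{I}_{\mathbf{x}^{(j+1)}_{even}}\cdot L_1L_2\cdot A_{j+1}\bigr). \]

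It remains to observe that $\mathbf{I}_{\mathbf{x}^{(j+1)}_{even}}\cdot L_1L_2=0$. Indeed $\mathbf{I}_{\mathbf{x}^{(j+1)}_{even}}\cdot L_1=L_1^{\mathbf{x}^{(j+1)}_{even}}$ has right idempotent $\mathbf{x}^{(j+1)}_{even}\setminus\{2\}\cup\{1\}=\{1,4,6,\dots,2j+2\}$, which does not contain $3$; hence composing on the right with $L_2$, whose nonzero left idempotents all contain $3$, gives $0$. Therefore the quotient above is trivial, $\mathsf{OS}_{A}(\vee(j+1))\cong\mathbf{I}_{\mathbf{x}^{(j+1)}_{even}}\cdot A_{j+1}$, and the induction is complete; since no higher operations were ever introduced, this is an isomorphism of type A modules. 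As a consistency check, this also recovers the description in the convention above that $\mathsf{OS}_{A}(\vee(n))$ is obtained from $\mathsf{OS}_{D}(\wedge(n))\cong A\cdot\mathbf{I}_{\mathbf{x}_{even}}$ by converting the left action into a right action.

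The main obstacle is bookkeeping rather than anything deep: one has to (i) pin down the index conventions relating the $\mathsf{OS}_{AA}(\vee_1)$--presentation (the shift $\phi$, the set $\mathbf{I}$, and the new strand occupying slot $2$) to the ordered plat picture of Figure \ref{orderedcups}; (ii) justify passing between the $DA$-- and $AA$--bimodule presentations of $\vee_1$ and confirm that no higher $A_\infty$ operations survive the iterated box tensor product; and (iii) keep careful track of the nonstandard convention for the global minimum --- treating the absolute minimum exactly like the relative minima is precisely what makes the base case $j=1$ fit the pattern, so that the uniform induction goes through.
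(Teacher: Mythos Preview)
Your proof is correct and follows the same idea as the paper's: identify the iterated box tensor product as $\mathbf{I}_{\mathbf{x}_{even}}\cdot A$ modulo the relations coming from the $L_1L_2$ quotients, and then observe that these relations vanish for idempotent reasons. The paper simply asserts the quotient description $\mathbf{I}_{\mathbf{x}_{even}}\cdot A/(L_1L_2\cdot A,\dots,L_{2n-1}L_{2n}\cdot A)$ in one step and checks all $n$ relations at once, whereas you carry out the same computation inductively, killing the single new relation $L_1L_2$ at each stage; your version is more carefully justified but not genuinely different.
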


\begin{proof}

It follows from the definition of $\mathsf{OS}_{DA}(\vee_1)$ that
\[\mathsf{OS}_{A}(\vee(n)) \cong \mathbf{I}_{\mathbf{x}_{even}} \cdot A /( L_{1}L_{2} \cdot A,L_{3}L_{4} \cdot A,..., L_{2n-1}L_{2n} \cdot A ) \]

\noindent
But for $i=1,...,n$, we already have $\mathbf{I}_{\mathbf{x}_{even}} \cdot L_{2i-1} L_{2i} =0$ for idempotent reasons. Thus, the relations are redundant, and 
\[\mathsf{OS}_{A}(\vee(n)) \cong \mathbf{I}_{\mathbf{x}_{even}} \cdot A \]
as desired.
\end{proof}

\section{Background II: Our bimodules and Khovanov homology}

In this section, we will describe the bimodules from \cite{alishahi2018link} and their relationship with Khovanov homology. The objects in this section are true bimodules (as opposed to $DA$- or $DD$-bimodules) over an algebra $\mathcal{A}$ which is isomorphic to $A$.

\subsection{The algebra $\mathcal{A}$}

The algebra $\mathcal{A}$ is isomorphic to $A=A(n,k)$, but will be described in terms of dual generators. In particular, for each idempotent state $\mathbf{x}$ we have an idempotent
\[  \iota_{\mathbf{x}} = \mathbf{I_{\overline{x}}}  \]
where $\mathbf{\overline{x}}$ is the complement of $\mathbf{x}$. With respect to these complementary idempotents, there are generators $\mathcal{L}_{i}, \mathcal{R}_{i}$ given by 
\[ \mathcal{L}_{i} = R_i \hspace{2cm} \mathcal{R}_i = L_i \]

The algebra $\mathcal{A}$ is generated by the idempotents $\iota_{\mathbf{x}}$ together with the $\mathcal{L}_i, \mathcal{R}_i, U_{i}$, modulo the relations
\[ \mathcal{R}_{i+1} \cdot \mathcal{R}_{i} = 0 \] \[\mathcal{L}_{i} \cdot \mathcal{L}_{i+1} = 0 \]
\[ \mathcal{L}_{i} \cdot \mathcal{R}_{i} = \iota_{\mathbf{x(i+1)}} \cdot U_{i} \cdot \iota_{\mathbf{x(i+1)}}  \]
\[ \mathcal{R}_{i} \cdot \mathcal{L}_{i} = \iota_{\mathbf{x(i)}} \cdot U_{i} \cdot \iota_{\mathbf{x(i)}} \]

\subsection{Interpretation as strands algebras}

The Ozsv\'{a}th-Szab\'{o} idempotent states correspond to local Kauffman states, so $i \in \mathbf{x}$ means that the $i$th slot between the strands is occupied. Our algebras came from a planar Heegaard diagram where it is more natural to view the \emph{strands} as being occupied, so in $\A$, $i \in \mathbf{x}$ will be interpreted as the $i$th strand is occupied (see Figures \ref{R1OS} and \ref{R1AD}).

\begin{figure}[ht]
\centering
\def\svgwidth{8cm} \scriptsize
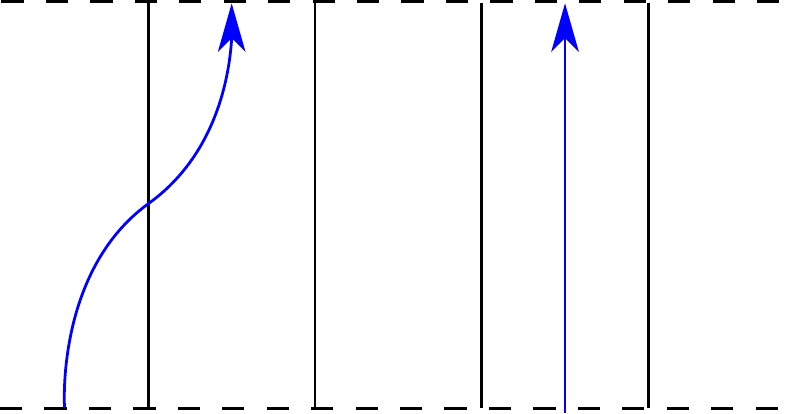
\caption{Strands representation of the element $R_{1}^{\mathbf{x}} \in A(2,2)$ for $\mathbf{x} = \{1,4\}$.}\label{R1OS}
\end{figure}

\begin{figure}[ht]
\centering
\def\svgwidth{8cm} \scriptsize
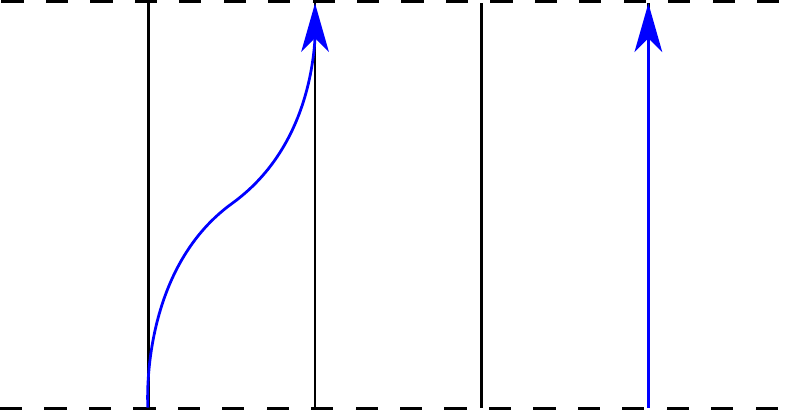
\caption{Strands representation of the element $\mathcal{R}_{1}^{\mathbf{x}} \in \A(2,2)$ for $\mathbf{x} = \{1,4\}$.}\label{R1AD}
\end{figure}

\subsection{The identity bimodule}

Let $\id$ denote the identity tangle on $2n$ strands. Then the bimodule $\M(\id)$ is the free $\A$-module of rank $1$, where the left and right actions are the standard left and right multiplication in $\A$.

This bimodule has the following geometric interpretation, which will be useful for understanding more complicated tangles. Viewing $\id$ as an oriented graph with boundary, let $Z$ denote an oriented $k$-component cycle in $\id$.

\begin{definition}

Given a $k$-component cycle $Z$ in an open braid $T$, let $b(Z)$ (resp. $t(Z)$) denote the idempotent state consisting of incoming strands (resp. outgoing strands) which belong to $Z$.

\end{definition}

Each cycle $Z$ gives a generator $x_{Z}$ of $\M(\id)$ with left idempotent $\iota_{b(Z)}$ and right idempotent $\iota_{t(Z)}$, $x_{Z} = \iota_{b(Z)} \cdot x_{Z} \cdot \iota_{t(Z)}$. Note that in the case of the identity tangle, $b(Z)=t(Z)$. However, this will not be true in general.

The bimodule $\M(\id)$ generated by the $x_{Z}$ over all cycles $Z$ modulo the relations necessary to make the following map an isomorphism:
\[ f: \M(\id) \to \A \quad  \text{defined by} \quad f(x_{Z}) = \iota_{b(Z)}\]

\noindent
The interpretation of these relations in terms of the strands algebras are given in Figure \ref{AlgebraMultiplicationFig}. They are: 

\begin{itemize}
    \item Pushing strands through the diagram.
    \item No self-intersection.
    \item Multiplication on a strand.
\end{itemize}

\begin{figure}[ht]
\centering
\def\svgwidth{12cm} 
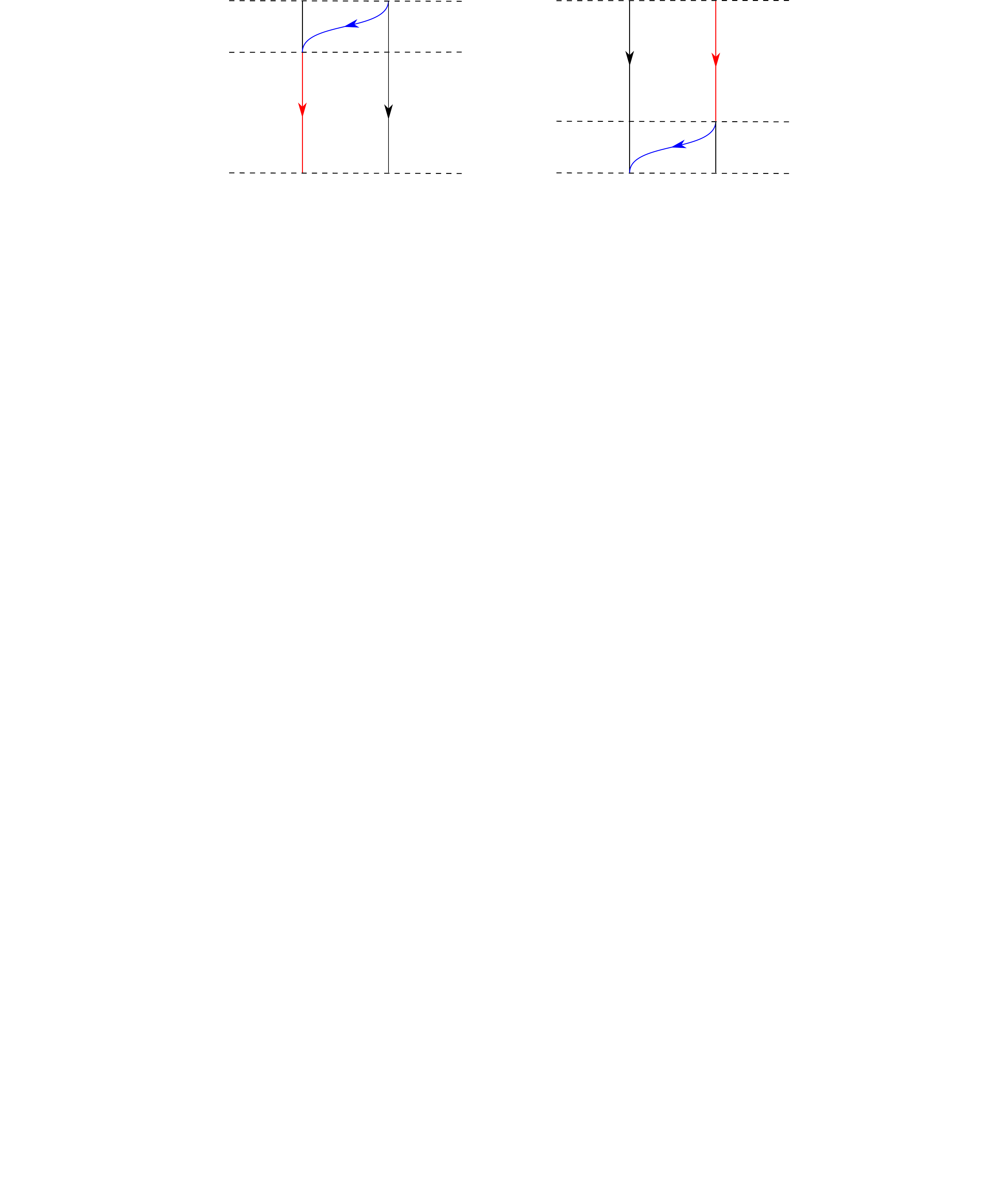
\caption{A diagrammatic description of some of the relations in the strands algebra.}\label{AlgebraMultiplicationFig}
\end{figure}

\subsection{Our bimodule for singularization}
\label{ex:sing}
 Let $\mathsf{X}_i$ denote the elementary singular braid on $2n$ strands with a singularization between strands $i$ and $i+1$. Let $e_1$ and $e_2$ (resp. $e_3$ and $e_4$) denote the left and right incoming (resp. outgoing) edges at the only $4$-valent vertex of $\mathsf{X}_i$, respectively (see Figure \ref{xi}). 
 
Let $Z$ be a $k$-component cycle in $\mathsf{X}_i$ which does not include all four edges $e_1,e_2,e_{3},e_4$. The bimodule $\M(\mathsf{X}_i)$ is generated by $x_{Z}$ over all such $Z$, modulo the relations below. For each subset $I\subset\{1,2,3,4\}$, let $\mathcal{Z}I$ denote the set of cycles that locally consist of the edges labelled with elements in $I$.

\begin{figure}[ht]
\centering
\def\svgwidth{12cm} \scriptsize
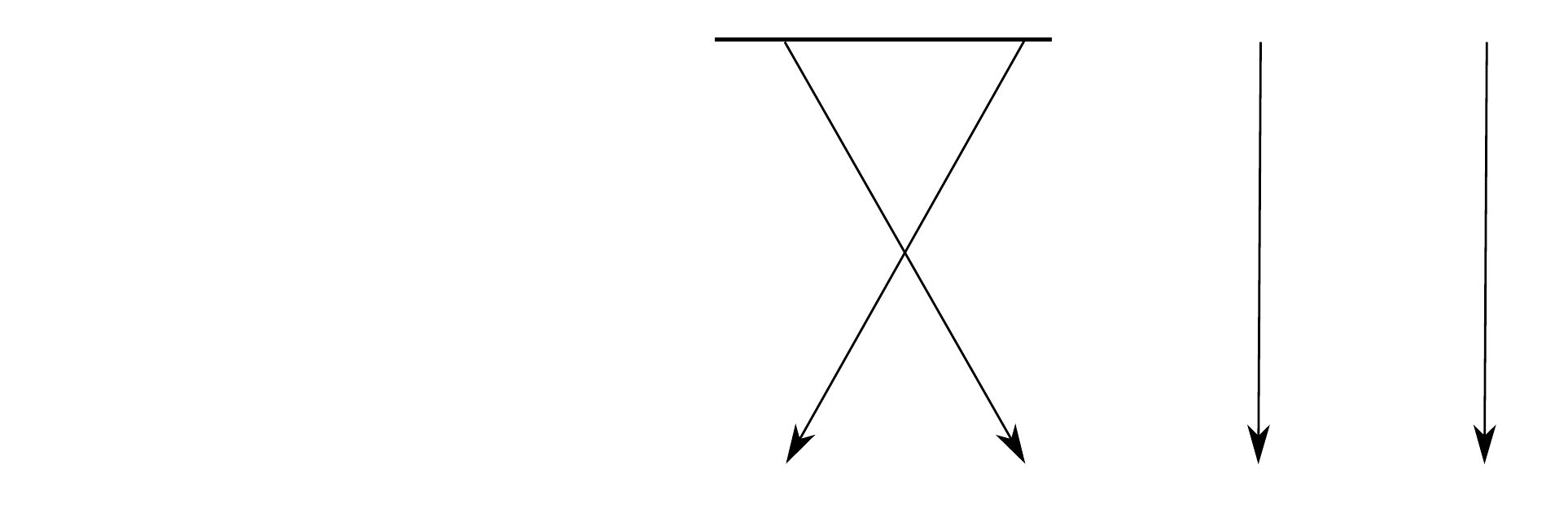
\caption{The elementary singular braid $\mathsf{X}_{i}$.}\label{xi}
\end{figure}

 \begin{enumerate}
 \item $x_{Z}=\iota_{b(Z)} \cdot x_{Z} \cdot \iota_{t(Z)}$,
 \item For $j<i$ or $j>i+1$, $U_j \cdot x_Z=x_Z \cdot U_j$, 
 \item For $j<i-1$ or $j>i+1$, if $j+1\in b(Z)$, then $\mathcal{L}_{j}x_{l_j(Z)}=x_{Z} \mathcal{L}_{j}$,
 \item For $j<i-1$ or $j>i+1$, if $j\in b(Z)$, then $\mathcal{R}_{j}x_{r_{j}(Z)}=x_{Z} \mathcal{R}_{j}$,
 \item $(U_i+U_{i+1})\cdot x_Z=x_Z \cdot (U_i+U_{i+1})$,
 \item $(U_i U_{i+1}) \cdot x_{Z}=x_Z \cdot (U_i U_{i+1})$,
 \item For $Z\in\mathcal{Z}\emptyset$, if $i-1\in b(Z)$, then 
 \[ \mathcal{L}_{i-1} \cdot x_{Z}=x_{Z(1,3)} \cdot \mathcal{L}_{i-1} \ \ \ \ \text{and}\ \ \ \ x_Z\cdot \mathcal{R}_{i-1}= \mathcal{R}_{i-1} \cdot x_{Z(1,3)},\]
  where $Z(1,3)\in\mathcal{Z}\{1,3\}$ denotes the cycle obtained from $Z$ by replacing the $(i-1)$-th strand with local cycle $e_1e_3$.
 \item For $Z\in\mathcal{Z}\{1,3\}$, \[\mathcal{L}_i \cdot x_Z=x_{Z(2,3)}\ \ \ \text{and}\ \ \ x_{Z} \cdot \mathcal{R}_i=x_{Z(1,4)},\]
 where $Z(2,3)=(Z\setminus e_1)\cup e_2$ and $Z(1,4)=(Z\setminus e_3)\cup e_4$.
 \item For $Z\in\mathcal{Z}\{1,4\}$, $\mathcal{L}_i \cdot x_Z=x_{Z(2,4)}$ where $Z(2,4)=(Z\setminus e_1)\cup e_2$.
 \item For $Z\in\mathcal{Z}\{2,3\}$, $x_Z \cdot \mathcal{R}_i=x_{Z(2,4)}$ where $Z(2,4)=(Z\setminus e_3)\cup e_4$.
 \item For each $Z\in\mathcal{Z}\emptyset$, if $i+2\in b(Z)$ then, 
 \[ \begin{split}
 &x_Z \cdot \mathcal{L}_{i+1} \mathcal{L}_i=\mathcal{L}_{i+1} \cdot x_{Z}(2,3), \ \ \ \ x_Z \cdot U_{i} \mathcal{L}_{i+1}=\mathcal{L}_{i+1} \cdot x_{Z(2,4)}\\
 & \mathcal{R}_i \mathcal{R}_{i+1} \cdot x_Z =  x_{Z(1,4)} \cdot \mathcal{R}_{i+1}\ \ \ \ \mathcal{R}_{i+1} U_i \cdot x_Z=x_{Z(2,4)}R_{i+1}.
 \end{split}
 \]
where $Z(i,j)$ denotes the cycle obtained from $Z$ by replacing the $(i+2)$-th strand with local cycle $e_ie_j$. 
\end{enumerate}

We will now give a description of $\M(\mathsf{X}_i)$ as a free left $\A$-module. 

\begin{definition}
Let $x_{\emptyset}=\sum_{Z\in \mathcal{Z}\emptyset}x_Z$ and $x_{13}=\sum_{Z\in\mathcal{Z}\{1,3\}}x_Z$. We define, 
\[N_{+}=x_{\emptyset}+x_{13},\quad N_{-}=N_{+}U_{i}-U_{i+1}N_{+}, \quad E=N_{+}L_{i+1}, \quad W=N_{+}R_i.\]  

\end{definition}

\begin{lemma}\label{lem:gen}
Consider the generators 
\[ \mathcal{G} = \{ N_{+}, N_{-}, W, E\}.\] Then $\A \mathcal{G} = \mathsf{M}(\mathsf{X}_i)$, i.e. these four generators generate $\mathsf{M}(\mathsf{X}_i)$ as a left module over $\A$. 

\end{lemma}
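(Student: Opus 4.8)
The plan is to show that every generator $x_Z$ of $\M(\mathsf X_i)$ lies in $\A\mathcal G$, by sorting the cycles $Z$ according to their local type at the unique $4$-valent vertex. Since a cycle that meets the vertex enters along one of $\{e_1,e_2\}$ and leaves along one of $\{e_3,e_4\}$, and may also miss the vertex entirely, the only local types are $\mathcal Z\emptyset$, $\mathcal Z\{1,3\}$, $\mathcal Z\{1,4\}$, $\mathcal Z\{2,3\}$, $\mathcal Z\{2,4\}$ (the case using all four edges being excluded by the definition of $\M(\mathsf X_i)$). Two facts drive the argument. First, by relation (1), for any idempotent $\iota_{\mathbf w}$ one has $\iota_{\mathbf w}\cdot x_Z = x_Z$ if $\mathbf w = b(Z)$ and $\iota_{\mathbf w}\cdot x_Z = 0$ otherwise; hence whenever an element $g\in\M(\mathsf X_i)$ can be written as a sum $\sum_{Z\in\mathcal F}x_Z$ over a family $\mathcal F$ of cycles with pairwise distinct bottom idempotents, every $x_Z$ with $Z\in\mathcal F$ is recovered as $\iota_{b(Z)}\cdot g\in\A g$. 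Second, within a single local type a cycle is determined by which of the $2n-2$ strands other than $i,i+1$ it occupies, so the bottom idempotents are distinct there; moreover cycles of type $\emptyset$ have $i,i+1\notin b(Z)$, those of type $\{1,3\}$ or $\{1,4\}$ have $i\in b(Z)\not\ni i+1$, and those of type $\{2,3\}$ or $\{2,4\}$ have $i+1\in b(Z)\not\ni i$.

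With these in hand I would argue in three steps. \emph{Types $\emptyset$ and $\{1,3\}$:} by definition $N_+ = x_\emptyset + x_{13} = \sum_{Z\in\mathcal Z\emptyset\cup\mathcal Z\{1,3\}}x_Z$, and the bottom idempotents occurring here are pairwise distinct (the type-$\emptyset$ ones omit $i$ while the type-$\{1,3\}$ ones contain it, and within each type they are distinct), so all $x_Z$ of these two types lie in $\A N_+$. \emph{Types $\{2,3\}$ and $\{1,4\}$:} relation (8) gives $\mathcal L_i\cdot x_Z = x_{Z(2,3)}$ and $x_Z\cdot\mathcal R_i = x_{Z(1,4)}$ for $Z\in\mathcal Z\{1,3\}$, and $Z\mapsto Z(2,3)$, $Z\mapsto Z(1,4)$ are bijections onto $\mathcal Z\{2,3\}$ and $\mathcal Z\{1,4\}$ respectively. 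The first identity already puts every type-$\{2,3\}$ generator into $\A N_+$ by the previous step. For type $\{1,4\}$, expanding the defining expression for $W$ (a right multiple of $N_+$) and using relation (1) to discard the summands coming from $\mathcal Z\emptyset$ (whose right idempotent is incompatible with the algebra element being applied), one finds $W = \sum_{Z'\in\mathcal Z\{1,4\}}x_{Z'}$, again a sum over cycles with pairwise distinct bottom idempotents — these agree with the bottom idempotents of the $\mathcal Z\{1,3\}$ cycles, since passing from $Z$ to $Z(1,4)$ alters only the top — so every type-$\{1,4\}$ generator lies in $\A W$. The generator $E$ yields the same conclusion by the mirror-symmetric computation, and $N_-$ is the remaining element of $\mathcal G$; it is harmless for spanning and is recorded here because it is needed for the finer statement, proved afterwards, that $\mathcal G$ is a basis over the relevant idempotent subalgebra.

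\emph{Type $\{2,4\}$:} relation (9) gives $\mathcal L_i\cdot x_Z = x_{Z(2,4)}$ for $Z\in\mathcal Z\{1,4\}$, with $Z\mapsto Z(2,4)$ a bijection onto $\mathcal Z\{2,4\}$; since every type-$\{1,4\}$ generator already lies in $\A\mathcal G$, so does every type-$\{2,4\}$ generator. Having exhausted all five types, $\A\mathcal G$ contains every $x_Z$ and therefore equals $\M(\mathsf X_i)$. The only genuinely delicate point in carrying this out is the bookkeeping in the second step: one must verify that the products defining $W$ (and $E$) collapse exactly to the claimed sums — that is, that the would-be contributions from cycles in $\mathcal Z\emptyset$ really do vanish for idempotent reasons — and one must keep straight the identifications $\mathcal L_i = R_i$, $\mathcal R_i = L_i$ between the $\A$-generators and the Ozsv\'{a}th--Szab\'{o} generators when reading off which algebra element is applied. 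Everything else is a direct application of relations (1) and (8)--(11).
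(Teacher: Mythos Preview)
Your treatment of the five local types is fine and matches the paper's computation of part (a): once you pick off the individual $x_Z$'s from $N_+$, $\mathcal L_iN_+$, $W$, and $\mathcal L_iW$ by hitting with the appropriate idempotent on the left, you have indeed shown $x_Z\in\A\mathcal G$ for every allowed cycle $Z$.

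The gap is in the final sentence. The bimodule $\M(\mathsf X_i)$ is presented by the $x_Z$ as an $\A$--$\A$-\emph{bimodule}, so a general element has the form $\sum_j a_j\, x_{Z_j}\, b_j$ with $a_j,b_j\in\A$; knowing that each $x_Z$ lies in the left submodule $\A\mathcal G$ does not by itself force $x_Z\cdot b$ to lie there as well. Concretely, relations (5) and (6) only tell you how the symmetric functions $U_i+U_{i+1}$ and $U_iU_{i+1}$ pass through $x_Z$ from right to left; they do not directly express $x_Z\cdot U_i$ (or $x_Z\cdot U_{i+1}$) as a left $\A$-combination of generators $x_{Z'}$. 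So ``$\A\mathcal G$ contains every $x_Z$, therefore equals $\M(\mathsf X_i)$'' is not justified.

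The paper closes this gap by proving a second statement: for each $x\in\mathcal G$ and each algebra generator $a\in\{U_i,U_{i+1},\mathcal L_i,\mathcal L_{i+1},\mathcal R_i,\mathcal R_{i+1}\}$, one has $x\cdot a\in\A\mathcal G$ (for $j\neq i,i+1$ the generators commute past $x$). This is verified by writing out the full right-multiplication table for the four elements $N_+,N_-,E,W$; for instance $N_+\cdot U_i = N_- + U_{i+1}N_+$ and $E\cdot U_i = \mathcal L_{i+1}\mathcal L_i W$, which is exactly where $N_-$ earns its place in $\mathcal G$. With that closure established, $\A\mathcal G$ is a sub-bimodule containing every $x_Z$, hence all of $\M(\mathsf X_i)$. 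You should add this right-multiplication check to complete the argument.
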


\begin{proof}

Consider the generators $x_{Z}$. Since $\A \{x_{Z}\} \A =\mathsf{M}(\mathsf{X}_i)$, it suffices to show that:

\begin{enumerate}[label=(\alph*)]
\item Each $x_{Z}$ is in $\A \mathcal{G}$, and
\item That it is closed under right multiplication, i.e. that for any $x \in \mathcal{G}$ and any generator $a$ of $\A$, $xa=by$ for some $y \in \mathcal{G}$ and some $b \in \A$.
\end{enumerate}

For $(a)$, the generators $x_{Z}$ where $Z\in\mathcal{Z}\emptyset\cup\mathcal{Z}\{1,3\}\cup\mathcal{Z}\{1,4\}$ are already included in $\mathcal{G}$. Since $\mathcal{L}_{i}N_{+} = \sum_{Z\in\mathcal{Z}\{2,3\}}x_Z$ and $\mathcal{L}_{i}W= \sum_{Z\in\mathcal{Z}\{2,4\}}x_{Z}$, this covers all cycles $Z$.

For $(b)$, let $x \in \mathcal{G}$ and let $a$ be a generator of $\A$, so $a = \mathcal{L}_{j}$, $\mathcal{R}_{j}$, or $U_{j}$ for some $j$. 

%If $j < i-1$ or $j>i+1$, $x_{Z} a = a x_{Z'}$, where $Z$ and $Z'$ have the same edges at the singularization, so $x_{Z'} \in \mathcal{G}$. 

If $j<i$ or $j>i+1$, $xa=ax$ for each $x\in\mathcal{G}$. For $j=i$ and $j=i+1$, the right multiplication is given by Table \ref{SingTable}, where the left column indicates the input generator and the top row is the algebra element which is multiplying on the right. For example, the element in the second column, fourth row is saying that $EU_{i} = \mathcal{L}_{i+1}\mathcal{L}_{i} W$.

\begin{table}[!h]
\resizebox{\textwidth}{!}{%
\begin{tabular}{|l|l|l|l|l|l|l|} 
\hline 
 & $U_{i}$                        & $U_{i+1}$                        & $\L_{i}$            & $\L_{i+1}$              & $\R_{i}$               & $\R_{i+1}$              \\ \hline
$N_{+}$   & $N_{-}+U_{i+1}N_{+}$             & $-N_{-}+U_{i}N_{+}$            & $0$                & $E$                  & $W$                   & $0$                  \\ \hline
$N_{-}$   & $U_{i}N_{-}$                   & $U_{i+1}N_{-}$                   & $0$                & $\L_{i+1}\L_{i}W-U_{i+1}E$ & $-\R_{i}\R_{i+1}E+U_{i}W$ & $0$                  \\ \hline
$E$       & $\L_{i+1}\L_{i}W$                  & $(U_{i}+U_{i+1})E - \L_{i+1}\L_{i}W$ & $\L_{i+1}\L_{i}N_{+}$  & $0$                  & $0$                   & $U_{i}N_{+} - N_{-}$ \\ \hline
$W$       & $(U_{i}+U_{i+1})W - \R_{i}\R_{i+1}E$ & $\R_{i}\R_{i+1}E$                  & $N_{-}+U_{i+1}N_{+}$ & $0$                  & $0$                   & $\R_{i}\R_{i+1}N_{+}$    \\ \hline
\end{tabular}}
\vspace{2mm}
\caption{Right multiplication table for $\M(\mathsf{X}_i)$}\label{SingTable}
\end{table}

\end{proof}

\begin{lemma}\label{lem:rightu}
The left submodules $\Alg \cdot N_{+}$ and $\Alg \cdot N_{+}U_i$ are disjoint. 
\end{lemma}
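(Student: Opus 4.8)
Observe that $\A\cdot N_+U_i=(\A\cdot N_+)\cdot U_i$, so the claim is that the left submodule $\A N_+$ and its image under right multiplication by $U_i$ meet only in $0$. The content of the statement is that the left and right actions of $U_i$ on $\M(\mathsf X_i)$ genuinely differ: by Table~\ref{SingTable}, $N_+U_i=N_-+U_{i+1}N_+\neq U_iN_+$, so right multiplication by $U_i$ pushes $N_+$ into the ``$N_-$'' direction, and one must show that this direction cannot be recovered from $N_+$ by left multiplication.

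The plan is to use the explicit presentation of $\M(\mathsf X_i)$. First, relations (5)--(6) say that $e_1:=U_i+U_{i+1}$ and $e_2:=U_iU_{i+1}$ act with coinciding left and right actions; together with the $U_j$ $(j\ne i,i+1)$ they generate a central polynomial subalgebra $\Lambda\subseteq\A$, over which $\M(\mathsf X_i)$ is (piecewise) free on the cycle generators and over which left multiplication by $\A$ is described by explicit matrices coming from relations (1)--(11) and Table~\ref{SingTable}; note that over $\Lambda$ the element $U_i$ satisfies $U_i^2-e_1U_i+e_2=0$. Next I would decompose by idempotents and reduce the statement, pair of idempotents by pair of idempotents, to a finite linear-algebra check; only the blocks with $i+1$ absent from the right idempotent occur, since $N_+=N_+\iota_P$ with $\iota_P=\sum_{i+1\notin\mathbf z}\iota_{\mathbf z}$. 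Then I would read off from the relation list a normal form for $\A N_+$: by Lemma~\ref{lem:gen} it is generated over $\Lambda$ by $N_+$ together with its transports under the $\mathcal L_j,\mathcal R_j$ that stay inside the ``$N_+$-column'' (for instance $\mathcal L_iN_+=\sum_{Z\in\mathcal Z\{2,3\}}x_Z$), and one checks that none of these transports has a component along the generator $N_-$. Finally, comparing the $N_-$-component of an element $v\in\A N_+\cap(\A N_+)U_i$ --- which is $0$ from the first description, but a nonzero $\Lambda$-multiple of the $N_-$-generator in $N_+U_i=N_-+U_{i+1}N_+$ --- together with a comparison of the remaining ($N_+$-, $E$-, $W$-) components, forces $v=0$.

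The step I expect to be the main obstacle is making the descriptions of $\A N_+$ and $(\A N_+)U_i$ precise enough to compare in the blocks where $N_+$ and $N_-$ carry the \emph{same} pair of idempotents (this happens precisely because $U_i$ does not change idempotents): there $N_+$ and $N_+U_i$ span the same one-block piece of $\M(\mathsf X_i)$, so disjointness of the \emph{left} submodules they generate can only be detected after transporting both to neighbouring idempotents via the $\mathcal L_j,\mathcal R_j$. Equivalently, the crux is to rule out a nonzero $b\in\A$ with $bN_-=0$ but $bN_+U_i\ne 0$, and this is exactly where relations (7)--(11), which couple the left and right actions across the $4$-valent vertex, must be used with care. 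A convenient way to organize the bookkeeping is to work instead with the honest $\mathbb Q$-basis of $\M(\mathsf X_i)$ coming from the cube-of-resolutions/Kauffman-states description in \cite{alishahi2018link} and to compare the $U_i$-adic leading terms of elements of the two submodules; with that model in hand the disjointness becomes a direct inspection rather than a manipulation of relations.
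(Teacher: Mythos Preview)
Your proposal is a sketch rather than a proof: you identify the approach (direct computation via the presentation and an explicit $\mathbb Q$-basis) and you correctly locate where the difficulty lies, but the crucial step---the ``direct inspection'' of the two submodules in the blocks where $N_+$ and $N_-$ share idempotents---is not actually carried out. More seriously, the argument as phrased risks circularity: speaking of the ``$N_-$-component'' of an element of $\M(\mathsf X_i)$ presupposes a direct-sum decomposition $\A N_+\oplus\A N_-\oplus\A E\oplus\A W$, but that decomposition (Lemma~\ref{freesingular}) is established only \emph{after} the present lemma and its proof uses this lemma. Your suggested fix---passing to an honest $\mathbb Q$-basis built from the cycle generators $x_Z$---could in principle rescue this, but you would then need to exhibit that basis and perform the comparison, and relations~(1)--(11) make this nontrivial bookkeeping that you have not done.

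The paper's argument is entirely different and sidesteps all of these issues. It invokes the MOY~II relation from~\cite{alishahi2018link}, which gives an isomorphism of $\A$-bimodules
\[
\M(\mathsf X_i)\otimes_{\A}\M(\mathsf X_i)\;\cong\;\M(\mathsf X_i)\{1\}\oplus\M(\mathsf X_i)\{-1\},
\]
under which $N_+\otimes N_+$ lands as $N_+$ in one summand and $N_+U_i\otimes N_+$ lands as $N_+$ in the other. Since the two summands are disjoint, so are $\A\cdot(N_+\otimes N_+)$ and $\A\cdot(N_+U_i\otimes N_+)$; pulling back along the left-module map $x\mapsto x\otimes N_+$ then gives the disjointness of $\A N_+$ and $\A N_+U_i$. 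This is a short structural argument that never touches the presentation directly. The tradeoff is that it imports a result (MOY~II) proved elsewhere, whereas your approach, if completed, would be self-contained---but as written it is not complete.
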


\begin{proof}
By the MOY \rom{2} relation from \cite{alishahi2018link}, we know that 
\[\mathsf{M}(\mathsf{X}_i)\otimes_{\A}\mathsf{M}(\mathsf{X}_i)\cong\mathsf{M}(\mathsf{X}_i)\{1\}\oplus\mathsf{M}(\mathsf{X}_i)\{-1\}. \]
Under this isomorphism, $N_{+} \otimes N_{+}$ and $N_{+} U_i\otimes N_{+}$ are mapped to the element $N_{+}$ in the summands $\mathsf{M}(\mathsf{X}_i)\{1\}$ and $\mathsf{M}(\mathsf{X}_i)\{-1\}$, respectively. Thus, the left submodules $\A \cdot  N_{+} \otimes N_{+}$ and $\A \cdot N_{+} U_i \otimes N_{+}$ and consequently $\Alg \cdot N_{+}$ and $\Alg \cdot N_{+} U_i$ are disjoint.
\end{proof}

\begin{corollary}
The left submodule $\A \cdot N_{+} U_{i+1}$ is also disjoint from $\A \cdot N_{+}$.
\end{corollary}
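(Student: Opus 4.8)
The plan is to run the argument of Lemma~\ref{lem:rightu} a second time, with $U_i$ replaced by $U_{i+1}$; the only genuinely new point is identifying the image of $N_+U_{i+1}\otimes N_+$ under the MOY~\rom{2} splitting. So first I would recall from \cite{alishahi2018link} the isomorphism
\[ \M(\mathsf{X}_i)\otimes_{\A}\M(\mathsf{X}_i)\;\cong\;\M(\mathsf{X}_i)\{1\}\oplus\M(\mathsf{X}_i)\{-1\}, \]
under which $N_+\otimes N_+\mapsto(N_+,0)$ and $N_+U_i\otimes N_+\mapsto(0,N_+)$. Then I would write $N_+U_{i+1}=N_+(U_i+U_{i+1})-N_+U_i$ and move the central factor $U_i+U_{i+1}$ across $\otimes_{\A}$ using the relation $(U_i+U_{i+1})N_+=N_+(U_i+U_{i+1})$ (item~(5) in the list defining $\M(\mathsf{X}_i)$), so that $N_+(U_i+U_{i+1})\otimes N_+=(N_+\otimes N_+)\cdot(U_i+U_{i+1})$ and hence
\[ N_+U_{i+1}\otimes N_+\;=\;(N_+\otimes N_+)\cdot(U_i+U_{i+1})\;-\;N_+U_i\otimes N_+\;\longmapsto\;\bigl(N_+(U_i+U_{i+1}),\,-N_+\bigr). \]

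The next step is to observe that the left submodules $\A\cdot(N_+\otimes N_+)$ and $\A\cdot(N_+U_{i+1}\otimes N_+)$ of $\M(\mathsf{X}_i)\{1\}\oplus\M(\mathsf{X}_i)\{-1\}$ are disjoint: an element in their intersection has the form $(aN_+,0)=\bigl(bN_+(U_i+U_{i+1}),-bN_+\bigr)$, so the $\{-1\}$-components force $bN_+=0$, whence $bN_+(U_i+U_{i+1})=(bN_+)(U_i+U_{i+1})=0$ and thus $aN_+=0$. Finally, exactly as in Lemma~\ref{lem:rightu}, this disjointness in $\M(\mathsf{X}_i)\otimes_{\A}\M(\mathsf{X}_i)$ descends to $\M(\mathsf{X}_i)$: given $\xi=aN_+=cN_+U_{i+1}$ in $\A\cdot N_+\cap\A\cdot N_+U_{i+1}$, we have $\xi\otimes N_+=a(N_+\otimes N_+)=c(N_+U_{i+1}\otimes N_+)$ lying in the trivial intersection, so $\xi\otimes N_+=0$; but $\xi\otimes N_+=a(N_+\otimes N_+)\mapsto(aN_+,0)=(\xi,0)$, so $\xi=0$.

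I expect this to be short, so the hard part is only bookkeeping: keeping track of how the central $U_j$'s and the idempotents move across $\otimes_{\A}$ when computing the image of $N_+U_{i+1}\otimes N_+$, and justifying the last implication, ``$\xi\otimes N_+=0\Rightarrow\xi=0$ for $\xi\in\A\cdot N_+$'', which is exactly the assertion that $N_+\otimes N_+$ freely generates the $\{1\}$-summand under MOY~\rom{2}. One could instead hope to read the statement off Lemma~\ref{lem:rightu} via the strand-interchange symmetry of $\mathsf{X}_i$, which fixes $N_+$ and swaps $N_+U_i$ with $N_+U_{i+1}$; but that symmetry also swaps $U_i$ and $U_{i+1}$ and so acts nontrivially on $\A$, and checking it is an honest twisted self-equivalence looks no easier than the computation above.
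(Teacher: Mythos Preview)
Your proof is correct and is essentially the argument the paper has in mind: the Corollary is stated there without proof, and the implicit justification is to re-run the MOY~\rom{2} computation from Lemma~\ref{lem:rightu} with $U_{i+1}$ in place of $U_i$, which is precisely what you do. Note that Lemma~\ref{lem:rightu} as a black box does not suffice---using $N_+U_{i+1}=(U_i+U_{i+1})N_+-N_+U_i$ only reduces the claim to showing that $cN_+U_i=0$ forces $cN_+=0$, which again needs the MOY~\rom{2} splitting---so your return to the isomorphism is the right move.
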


For simplicity, let 
\[\iota_{i\setminus i+1}=\sum_{ \mathbf{x} \cap\{i,i+1\}=\{i\}}\iota_{\mathbf{x}},\quad \iota_{\emptyset}=\sum_{\mathbf{x} \cap\{i,i+1\}=\emptyset}\iota_{\mathbf{x}},\quad \iota_{i}=\sum_{i\in  \mathbf{x}}\iota_{\mathbf{x}}\]

\begin{lemma}\label{lem:elem}
The elements of $\mathcal{G}$ generate left modules isomorphic to the following idempotent subalgebras:
\begin{itemize}
\item $\Alg \cdot N_{+} \cong \Alg \cdot N_{-} \cong \Alg \cdot \iota_{i\setminus i+1}\oplus \Alg \cdot \iota_{\emptyset}$,
\item $\Alg \cdot  W\cong \Alg \cdot \iota_{i\setminus i+1}$
\item $\Alg \cdot E\cong \Alg \cdot \iota_{i+2}$.
\end{itemize}
\end{lemma}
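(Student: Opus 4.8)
The plan is to realise each $\mathcal{A}\cdot g$, for $g\in\mathcal{G}$, as a cyclic quotient of the appropriate idempotent subalgebra and then prove the quotient map is an isomorphism. First I would pin down the two-sided idempotent of each generator by tracking which cycles $x_Z$ enter it through the defining relations (1)--(11) of $\mathsf{M}(\mathsf{X}_i)$: this gives $g=\iota_g\cdot g\cdot\iota_g$ with $\iota_{N_+}=\iota_{N_-}=\iota_{\emptyset}+\iota_{i\setminus i+1}$, $\iota_W=\iota_{i\setminus i+1}$, and $\iota_E=\iota_{i+2}$. For instance $N_+=x_{\emptyset}+x_{13}$, where $x_{\emptyset}$ is supported (on both sides) on idempotent states $\mathbf{x}$ with $\mathbf{x}\cap\{i,i+1\}=\emptyset$ and $x_{13}$ on those with $\mathbf{x}\cap\{i,i+1\}=\{i\}$, while $W=N_+\mathcal{R}_i=\sum_{Z\in\mathcal{Z}\{1,4\}}x_Z$ by relation~(8). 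With these idempotents in hand, $a\iota_g\mapsto a\cdot g$ is a surjection of left $\mathcal{A}$-modules $\pi_g\colon\mathcal{A}\iota_g\twoheadrightarrow\mathcal{A}\cdot g$, and the lemma is precisely the assertion that each $\pi_g$ is injective.

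Next I would dispatch $N_-$ by reducing it to the case of $N_+$. Since the $U_j$ are central in $\mathcal{A}$, right multiplication by $U_i$ and left multiplication by $U_{i+1}$ are both left $\mathcal{A}$-module endomorphisms of $\mathsf{M}(\mathsf{X}_i)$, so $\nu=r_{U_i}-l_{U_{i+1}}$ is one as well, and $\nu(N_+)=N_+U_i-U_{i+1}N_+=N_-$; hence $\nu$ restricts to a surjection $\mathcal{A}\cdot N_+\twoheadrightarrow\mathcal{A}\cdot N_-$. If $\nu(aN_+)=0$ then $aN_+U_i=aU_{i+1}N_+$, with the left side in $\mathcal{A}\cdot N_+U_i$ and the right side in $\mathcal{A}\cdot N_+$; by Lemma~\ref{lem:rightu} these submodules are disjoint, so $aU_{i+1}N_+=0$, and then — granting the $N_+$ case, so that $\pi_{N_+}$ is injective — we get $aU_{i+1}\iota_{N_+}=0$ in $\mathcal{A}$. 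Since no idempotent state in the support of $\iota_{N_+}$ contains $i+1$, left multiplication by $U_{i+1}$ is injective on $\mathcal{A}\iota_{N_+}$, forcing $a\iota_{N_+}=0$. Thus $\nu$ restricts to an isomorphism $\mathcal{A}\cdot N_+\cong\mathcal{A}\cdot N_-$, and the $N_-$ statement follows from the $N_+$ one. (The Corollary to Lemma~\ref{lem:rightu} plays the same role for the alternative presentation $N_-=U_iN_+-N_+U_{i+1}$.)

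It then remains to handle $N_+$, $W$, $E$. For $N_+$, since $\iota_{\emptyset}N_+=x_{\emptyset}$ and $\iota_{i\setminus i+1}N_+=x_{13}$ have disjoint right idempotents, $\mathcal{A}\cdot N_+=\mathcal{A}\cdot x_{\emptyset}\oplus\mathcal{A}\cdot x_{13}$, so it is enough to prove $\mathcal{A}\cdot x_{\emptyset}\cong\mathcal{A}\iota_{\emptyset}$, $\mathcal{A}\cdot x_{13}\cong\mathcal{A}\iota_{i\setminus i+1}$, $\mathcal{A}\cdot W\cong\mathcal{A}\iota_{i\setminus i+1}$, and $\mathcal{A}\cdot E\cong\mathcal{A}\iota_{i+2}$. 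I would argue these using the explicit $\mathbb{Q}$-basis of $\mathsf{M}(\mathsf{X}_i)$ from \cite{alishahi2018link} (cycles decorated by monomials in the $U_j$, taken in the quotients of $\mathbb{Q}[U_1,\dots,U_{2n}]$ prescribed by relations (2),(5),(6),(11)): one checks that left multiplication of $\mathcal{A}$ on each of $x_{\emptyset}$, $x_{13}$, $W$, $E$ sweeps out exactly a $\mathbb{Q}$-basis in bijection with a basis of the corresponding idempotent subalgebra. Equivalently — and this is the version I would actually write up — Lemma~\ref{lem:gen} furnishes a surjection $\bigoplus_{g\in\mathcal{G}}\mathcal{A}\iota_g\twoheadrightarrow\mathsf{M}(\mathsf{X}_i)$; both sides are finite-dimensional in each bigrading, so comparing graded dimensions (a direct count from the basis of \cite{alishahi2018link} against that of $2(\mathcal{A}\iota_{\emptyset}\oplus\mathcal{A}\iota_{i\setminus i+1})\oplus\mathcal{A}\iota_{i\setminus i+1}\oplus\mathcal{A}\iota_{i+2}$) shows this surjection is an isomorphism, which forces every $\pi_g$ to be injective simultaneously.

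I expect the main obstacle to be the bookkeeping in this last step: one must determine, near the singular vertex, exactly which products of $U_i$, $U_{i+1}$, $U_{i+2}$ survive on each family of cycles — the combinatorial content of relations (5)--(11) — and then verify that the resulting graded dimension of $\mathsf{M}(\mathsf{X}_i)$ agrees with that of the direct sum of idempotent subalgebras above. The $N_-$ reduction is the other delicate point, and is exactly why Lemma~\ref{lem:rightu} and its Corollary were isolated in advance: a priori $N_-$, being defined as a difference, could generate a strictly smaller left module than expected, and the disjointness statement (proved via the MOY~II relation) is precisely what rules this out.
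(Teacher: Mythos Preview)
Your proposal and the paper take genuinely different routes, and one step of yours has a real gap.

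The paper's argument for $N_+$ is a one–line retraction: the edge map $d^-:\mathsf{M}(\mathsf{X}_i)\to\mathsf{M}(\id)=\mathcal{A}$ satisfies $d^-(N_+)=\iota_\emptyset+\iota_{i\setminus i+1}$, so $d^-\circ\pi_{N_+}$ is the inclusion $\mathcal{A}\iota_{N_+}\hookrightarrow\mathcal{A}$ and $\pi_{N_+}$ is injective for free. For $W$ (and similarly $E$) the paper right–multiplies back into $\mathcal{A}\cdot N_+U_i$, e.g.\ $aW\mathcal{L}_i=a\iota_{i\setminus i+1}N_+U_i$, and then invokes the MOY~II proof of Lemma~\ref{lem:rightu}, which in fact gives the stronger statement $\mathcal{A}\cdot N_+U_i\cong\mathcal{A}\iota_{N_+}$. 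No dimension count is needed. Your approach~(b), comparing graded dimensions to turn the surjection of Lemma~\ref{lem:gen} into an isomorphism, is a legitimate alternative that proves all four cases (and Lemma~\ref{freesingular}) simultaneously, at the cost of the combinatorial check you flag.

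Your separate $N_-$ reduction, however, does not work as written. From disjointness you correctly get both $aN_+U_i=0$ and $aU_{i+1}N_+=0$, and then you use the second to conclude $aU_{i+1}\iota_{N_+}=0$ and claim that $U_{i+1}$ acts injectively on $\mathcal{A}\iota_{N_+}$ ``since no idempotent state in the support of $\iota_{N_+}$ contains $i+1$.'' That hypothesis controls only the \emph{right} idempotent. The relation $U_j\iota_{\mathbf{z}}=0$ for $\{j,j+1\}\subset\mathbf{z}$ can fire on the \emph{left} idempotent of $a$: for $m=4$, $i=1$, $k=2$, the piece $\iota_{\{2,3\}}\cdot\mathcal{A}\cdot\iota_{\{1,3\}}$ is nonzero (the complementary states $\{1,4\}$ and $\{2,4\}$ are close enough) and lies in $\mathcal{A}\iota_{N_+}$, yet $U_2\iota_{\{2,3\}}=0$ kills it. The paper sidesteps this by using the \emph{other} branch $aN_+U_i=0$ and the MOY~II upgrade of Lemma~\ref{lem:rightu}. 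If you run approach~(b) your $N_-$ paragraph is redundant anyway; if not, switch to the $aN_+U_i=0$ branch and cite $\mathcal{A}\cdot N_+U_i\cong\mathcal{A}\iota_{N_+}$.
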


\begin{proof}
First, we prove that $\A \cdot N_{+}\cong \A \cdot \iota_{i\setminus i+1}\oplus\A \cdot \iota_{\emptyset}$.
Consider the $\A$-module homomorphism 
\[\begin{split}
&f:\A \cdot \iota_{i\setminus i+1}\oplus\A \cdot \iota_{\emptyset}\to \A \cdot N_{+}\\
&f(a\iota_{i\setminus i+1}+b\iota_{\emptyset})=(a+b)N_{+}.
\end{split}
\] 
It is clearly surjective. Let $d^-$ be the edge bimodule homomorphism from $\mathsf{M}(\mathsf{X}_i)$ to $\A$. By definition, $d^-\circ f(\iota_{\bullet})=\iota_{\bullet}$ for $\bullet=\emptyset, i\setminus i+1$. Thus, $d^-\circ f$ is equal to the inclusion of $\A \iota_{i\setminus i+1}\oplus\A \iota_{\emptyset}$ into $\A$, and so $f$ is injective. From Lemma \ref{lem:rightu}, we also have $\A \cdot N_{+}U_i\cong \A \cdot \iota_{i\setminus i+1}\oplus\A \cdot \iota_\emptyset$. Since $N_{-} = N_{+}U_{i} - U_{i+1}N_{+}$, it follows that $\A \cdot N_{-} \cong \A \cdot \iota_{i\setminus i+1}\oplus\A \cdot \iota_\emptyset$ as well.

Similarly, we need to proof that if $aW=0$ for some $a\in \A \cdot \iota_{i\setminus i+1}$, then $a=0$. Since $W=N_+\mathcal{R}_i$ we have 
\[aW\mathcal{L}_i=a N_{+} \iota_{i \setminus i+1} U_i= a \iota_{i \setminus i+1} N_{+}  U_i = 0.\]
Therefore, $a=0$. The proof for $E$ is similar.

\end{proof}

\begin{lemma} \label{freesingular}
As a left $\Alg$-module $\mathsf{M}(\mathsf{X}_i)$ is a projective module isomorphic to \[\lsub{\Alg}\mathsf{M}(\mathsf{X}_i)\cong\Alg \cdot N_{+} \oplus\Alg \cdot N_{-} \oplus\Alg \cdot W\oplus\Alg \cdot E \]
\end{lemma}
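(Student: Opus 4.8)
The strategy is to exhibit $\M(\mathsf{X}_i)$ as the \emph{internal} direct sum of the four cyclic left submodules $\A N_+,\A N_-,\A W,\A E$. Projectivity is then automatic: by Lemma~\ref{lem:elem} each of these is isomorphic to a finite sum of idempotent subalgebras of $\A$, and every idempotent subalgebra $\A\iota_{\mathbf{x}}$ is a direct summand of the free module ${}_{\A}\A$. By Lemma~\ref{lem:gen} the four submodules already span $\M(\mathsf{X}_i)$, so the entire content is to show the sum is direct, equivalently that the natural surjection
\[
\pi\colon\ \A N_+\oplus\A N_-\oplus\A W\oplus\A E\ \longrightarrow\ \M(\mathsf{X}_i),\qquad (a_1,a_2,a_3,a_4)\longmapsto a_1N_++a_2N_-+a_3W+a_4E,
\]
from the external direct sum is injective.

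The first and most important sub-step will be the identity $\A N_+\cap\A N_-=0$. Since $N_-=N_+U_i-U_{i+1}N_+$, where $U_i$ acts on the right and $U_{i+1}$ on the left, an equality $\alpha N_+=\beta N_-$ rearranges to $\beta(N_+U_i)=(\alpha+\beta U_{i+1})N_+$, which lies in $\A N_+\cap\A(N_+U_i)=0$ by Lemma~\ref{lem:rightu}; hence $\beta(N_+U_i)=0$. Next I will show that right multiplication by $U_i$ is injective on $\A N_+$: it is a surjection of left $\A$-modules $\A N_+\twoheadrightarrow\A(N_+U_i)$, and by Lemma~\ref{lem:rightu} together with Lemma~\ref{lem:elem} both of these are isomorphic to the same module $\A\cdot(\iota_{i\setminus i+1}+\iota_\emptyset)$; composing with such an isomorphism yields a surjective endomorphism of a module that is finitely generated over the commutative Noetherian ring $\QQ[U_1,\dots,U_{2n}]$ sitting in the center of $\A$ (right and left actions commute, so the composite is $\QQ[U_1,\dots,U_{2n}]$-linear), and a surjective endomorphism of such a module is injective. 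Therefore $\beta N_+=0$, so $\beta N_-=\beta(N_+U_i)-U_{i+1}(\beta N_+)=0$ and $\alpha N_+=\beta N_-=0$.

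With this in hand I will peel the summands off, one at a time, from a hypothetical relation $aN_++bN_-+cW+dE=0$ (where, after absorbing idempotents, $a,b\in\A\cdot(\iota_{i\setminus i+1}+\iota_\emptyset)$, $c\in\A\iota_{i\setminus i+1}$, $d\in\A\iota_{i+2}$) by right-multiplying with the generators $\mathcal{L}_i,\mathcal{R}_i,\mathcal{L}_{i+1},\mathcal{R}_{i+1}$ and reading off the outcomes from Table~\ref{SingTable}. For instance, right multiplication by $\mathcal{L}_i$ annihilates the $N_+$ and $N_-$ terms and leaves $cN_-+(\text{an element of }\A)\cdot N_+=0$, so $cN_-\in\A N_+\cap\A N_-=0$, and then the injectivity encoded in Lemma~\ref{lem:elem} for $\A N_-$ forces $c=0$; a parallel computation using $\mathcal{R}_{i+1}$ (combined, if needed, with the identities produced by right-multiplication by $\mathcal{R}_i$ and $\mathcal{L}_{i+1}$, such as $(a+bU_i)W=b\mathcal{R}_i\mathcal{R}_{i+1}E$ and $(a-bU_{i+1})E=-b\mathcal{L}_{i+1}\mathcal{L}_iW$) eliminates $dE$, and the residual relation in $\A N_+\oplus\A N_-$ is killed by $\A N_+\cap\A N_-=0$ and the corollary following Lemma~\ref{lem:rightu}. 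Once all four coefficients are seen to vanish, $\pi$ is injective, which gives the claimed decomposition and hence projectivity.

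The part I expect to be delicate is precisely this last paragraph: $W$ and $E$ do not multiply cleanly against the crossing generators — each feeds back into $N_\pm$ (e.g.\ $W\mathcal{L}_i=N_-+U_{i+1}N_+$ and $E\mathcal{R}_{i+1}=U_iN_+-N_-$), so for each summand one must choose a sequence of right-multiplications whose composite genuinely isolates it, and verify the relevant idempotents match up. If the hand computation becomes unwieldy, a robust alternative is a Hilbert-series count: $\pi$ is a surjection of $\ZZ$-graded modules with finite-dimensional graded pieces, the Hilbert series of the domain is computed from the idempotent subalgebras via Lemma~\ref{lem:elem}, one checks it equals the Hilbert series of $\M(\mathsf{X}_i)$ obtained from the presentation of Section~\ref{ex:sing} (equivalently from the MOY relations of \cite{alishahi2018link}), and a surjection of finitely generated graded modules with equal Hilbert series is an isomorphism.
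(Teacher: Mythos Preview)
Your approach is correct and follows the same overall strategy as the paper—show that the four cyclic summands generate (Lemma~\ref{lem:gen}) and then verify the sum is direct by killing a hypothetical relation $a_1N_++a_2N_-+a_3W+a_4E=0$ term by term using right multiplications and Lemma~\ref{lem:rightu}. The main difference is in the order of operations: the paper first right-multiplies by the \emph{idempotent} $\iota_{i+1}$, which cleanly separates the problem into $a_1N_++a_2N_-=0$ and $a_3E+a_4W=0$ in one step (since $N_\pm\iota_{i+1}=0$ while $E\iota_{i+1}=E$, $W\iota_{i+1}=W$), and only then right-multiplies the $E,W$ piece by $\mathcal{R}_{i+1}$. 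This avoids the back-and-forth you anticipate in your final paragraph and makes the ``delicate'' part routine. Conversely, your Noetherian argument (a surjective endomorphism of a finitely generated module over a commutative ring is injective) is a genuine contribution: the paper's appeal to Lemmas~\ref{lem:elem} and~\ref{lem:rightu} at the step ``$a_2N_+U_i=0\Rightarrow a_2N_-=0$'' implicitly requires that the natural map $a\mapsto aN_+U_i$ be injective, which is exactly what your argument supplies (the paper's intended justification runs back through the MOY~II isomorphism used in Lemma~\ref{lem:rightu}, but this is not spelled out). Your Hilbert-series fallback would also work and is a clean independent check.
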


\begin{proof}
By Lemma \ref{lem:gen}, $\mathcal{G}$ generates $\mathsf{M}(\mathsf{X}_i)$ as a left $\Alg$-module. Suppose $a_{1}N_++a_{2}N_-+a_{3}E+a_{4}W=0$. Since $N_{+}\iota_{i+1}=N_{-}\iota_{i+1}=0$, while $W\iota_{i+1}=W$ and $E\iota_{i+1}=E$, we have $a_1 N_{+}+a_2N_{-}=0$. Thus, $(a_1-a_2U_{i+1})N_{+}+a_2N_{+}U_i=0$. Lemmas \ref{lem:elem} and \ref{lem:rightu} imply that  
$a_1 N_{+}=a_2 N_{-}=0$.

On the other hand, $(a_3E+a_4W)\mathcal{R}_{i+1}=a_3N_+U_{i+1}\iota_{i+2}+a_4\mathcal{R}_i\mathcal{R}_{i+1}N_+=0$. Again, lemmas \ref{lem:elem} and \ref{lem:rightu} imply that  $a_3\iota_{i+2}=a_3E=0$ and so $a_4W=0$.

\end{proof}

\subsection{Our bimodule for a plat maximum}

Let $\wedge(n)$ denote the plat maximum consisting of $n$ caps. Each cap consists of two edges oriented into a sink -- suppose the edges are numbered $e_{1},...,e_{2n}$ so that $e_{i}$ lies on strand $i$. The bimodule $\M(\wedge(n))$ is an $\A(n,n), \A(0,0)$-bimodule, where $\A(0,0)=\mathbb{Q}$.

The generators of $\M(\wedge(n))$ correspond to $n$-component cycles $Z$ relative to the boundary of $\wedge(n)$ union the bivalent vertices. For the $i$th maximum of $\wedge(n)$, $Z$ contains either $e_{2i-1}$ or $e_{2i}$, but not both. There are $2^n$ such cycles.

The module $\M(\wedge(n))$ is generated as a left module over $\A(n,n)$ by the $x_{Z}$ modulo the following relations:

\begin{enumerate}
    \item If $e_{2i-1} \in Z$ and $Z'=Z \setminus \{e_{2i-1} \} \cup \{e_{2i} \} $, then $\mathcal{L}_{2i-1} \cdot x_{Z} = x_{Z'}$.
    \item For all $i$ and $Z$, $\mathcal{L}_{2i} \cdot x_{Z}=0$.
\end{enumerate}

\noindent
Note that from the first relation, it follows that $\M(\wedge(n))$ is generated by the generator $x_{Z_{odd}}$, where $Z_{odd}$ is the cycle which has the left edge at each maximum:
\[Z_{odd}=\{e_1, e_3, ... e_{2n-1} \} \]

Let $\mathbf{x}_{odd}$ be the idempotent state $\{1,3,..., 2n-1\}$.

\begin{lemma}
The module $\M(\wedge(n))$ is isomorphic to $\A(n,n) \cdot \iota_{\mathbf{x}_{odd}}$.
\end{lemma}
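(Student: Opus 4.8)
The plan is to write down the evident surjection $\A(n,n)\cdot\iota_{\mathbf{x}_{odd}}\to\M(\wedge(n))$ and to construct an explicit left inverse. Since the cycle $Z_{odd}$ meets exactly the strands $1,3,\dots,2n-1$, we have $b(Z_{odd})=\mathbf{x}_{odd}$, so $x_{Z_{odd}}=\iota_{\mathbf{x}_{odd}}\cdot x_{Z_{odd}}$ and there is a well-defined left $\A(n,n)$-module homomorphism $f$ with $f(\iota_{\mathbf{x}_{odd}})=x_{Z_{odd}}$. By the remark preceding the statement, $\M(\wedge(n))$ is generated over $\A(n,n)$ by $x_{Z_{odd}}$, so $f$ is surjective. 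More precisely, writing $S(Z)\subseteq\{1,\dots,n\}$ for the set of caps at which $Z$ uses the right-hand edge $e_{2i}$, relation~(1) applied once at each $i\in S(Z)$ gives $x_Z=\bigl(\prod_{i\in S(Z)}\mathcal{L}_{2i-1}\bigr)\cdot x_{Z_{odd}}$, where the factors commute because their indices differ by at least $2$.

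For injectivity I would build $g\colon\M(\wedge(n))\to\A(n,n)\cdot\iota_{\mathbf{x}_{odd}}$ as follows. On the free left $\A(n,n)$-module on the generators $x_Z$, set $g_0(x_Z)=\bigl(\prod_{i\in S(Z)}\mathcal{L}_{2i-1}\bigr)\cdot\iota_{\mathbf{x}_{odd}}$ and extend $\A(n,n)$-linearly. Compatibility with relation~(1) is immediate, since $S(Z')=S(Z)\cup\{i\}$ and the $\mathcal{L}_{2j-1}$ commute. The one real point is compatibility with relation~(2): one must check $\mathcal{L}_{2i}\cdot g_0(x_Z)=0$ in $\A(n,n)$ for every $i$ and $Z$. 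Translating via $\mathcal{L}_j=R_j$ and $\iota_{\mathbf{x}_{odd}}=\mathbf{I}_{\mathbf{x}_{even}}$, and using that $R_{2i}\cdot\mathbf{I}_{\mathbf{z}}=0$ whenever $2i+1\notin\mathbf{z}$, this is a short case analysis. The factor $\mathcal{L}_{2i}$ commutes with every $\mathcal{L}_{2j-1}$ appearing in $g_0(x_Z)$ except possibly $\mathcal{L}_{2i-1}$ and $\mathcal{L}_{2i+1}$. If $i+1\in S(Z)$, then $\mathcal{L}_{2i+1}$ is present and, after commuting $\mathcal{L}_{2i}$ past the rest (and, if $i\in S(Z)$, past $\mathcal{L}_{2i-1}$, which commutes with $\mathcal{L}_{2i+1}$), the relation $\mathcal{L}_{2i}\mathcal{L}_{2i+1}=0$ kills the term. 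If $i+1\notin S(Z)$, then $\mathcal{L}_{2i}$ commutes all the way down either to $\iota_{\mathbf{x}_{odd}}$ or, when $i\in S(Z)$, to $\mathcal{L}_{2i-1}\cdot\iota_{\mathbf{x}_{odd}}$; in both cases the relevant left idempotent is supported off the slot $2i+1$, so $\mathcal{L}_{2i}=R_{2i}$ annihilates it. Hence $g_0$ descends to a left $\A(n,n)$-module map $g$ on $\M(\wedge(n))$.

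Finally, since $S(Z_{odd})=\emptyset$ we get $g\circ f(\iota_{\mathbf{x}_{odd}})=g(x_{Z_{odd}})=\iota_{\mathbf{x}_{odd}}$, so $g\circ f=\id$; and $f\circ g(x_Z)=\bigl(\prod_{i\in S(Z)}\mathcal{L}_{2i-1}\bigr)\cdot x_{Z_{odd}}=x_Z$ by iterating relation~(1), so $f\circ g=\id$. Thus $f$ is the desired isomorphism $\M(\wedge(n))\cong\A(n,n)\cdot\iota_{\mathbf{x}_{odd}}$. The main obstacle is the relation-(2) verification for $g_0$ (this is exactly what guarantees $g$ is well defined, hence that $\M(\wedge(n))$ does not collapse further than $\A(n,n)\cdot\iota_{\mathbf{x}_{odd}}$); everything else is formal bookkeeping. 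It is worth noting that, under $\iota_{\mathbf{x}}=\mathbf{I}_{\overline{\mathbf{x}}}$ with $\overline{\mathbf{x}_{odd}}=\mathbf{x}_{even}$, this recovers the earlier identification $\mathsf{OS}_{DA}(\wedge(n))\cong A\cdot\mathbf{I}_{\mathbf{x}_{even}}$ with $A=A(n,n)$, as one should expect.
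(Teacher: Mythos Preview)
Your proof is correct and follows essentially the same strategy as the paper's: both arguments observe that the surjection $\A(n,n)\cdot\iota_{\mathbf{x}_{odd}}\to\M(\wedge(n))$ sending $\iota_{\mathbf{x}_{odd}}\mapsto x_{Z_{odd}}$ is an isomorphism precisely because relation~(2) already holds for the images of the $x_Z$ in $\A(n,n)\cdot\iota_{\mathbf{x}_{odd}}$. The paper phrases this as a ``too far'' idempotent argument, while you package it as constructing an explicit inverse $g$ and checking well-definedness; the underlying computation is the same. Your case analysis for relation~(2) (splitting on whether $i+1\in S(Z)$, and using either $\mathcal{L}_{2i}\mathcal{L}_{2i+1}=0$ or an idempotent vanishing) is in fact more carefully spelled out than the paper's somewhat compressed justification.
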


\begin{proof}

Since the module $\M(\wedge(n))$ is generated by the element $x_{Z_{odd}}$ which has idempotent $\iota_{\mathbf{x}_{odd}}$, it is clearly a quotient of $\A(n,n) \cdot \iota_{\mathbf{x}_{odd}}$. To see that there are no additional relations in $\M(\wedge(n))$, we need to show that relation (2), $\mathcal{L}_{2i} \cdot x_{Z}=0$, is true in 
the identification with $\A(n,n) \cdot \iota_{\mathbf{x}_{odd}}$. In other words, for each idempotent state $\mathbf{x}'$ where $\mathbf{x}'\cap\{2i,2i+1\}=\{2i\}$ we have $\mathcal{L}_{2i} \cdot \phi^{\mathbf{x}',\mathbf{x}_{odd}}(1)=0$. But the left idempotent for $\mathcal{L}_{2j}\iota_{\mathbf{x}'}$ is too far from $\mathbf{x}_{odd}$, so this completes the proof.

%
%
%In other words, if $a \in \A(n,n)$ is a product of the $\mathcal{L}_{2i-1}$, then $\mathcal{L}_{2j} \cdot a \cdot \iota_{\mathbf{x}_{odd}} = 0$. But the left idempotent for $\mathcal{L}_{2j}$ is too far from $\mathbf{x}_{odd}$, so this completes the proof.
%

\end{proof}

\subsection{Our bimodule for a positive crossing}

Let $\sigma_i$ be an elementary positive braid with $2n$ strands where the crossing is between the $i$ and $i+1$ strands. The complex of bimodules $\mathsf{M}(\sigma_i)$ is defined to be the mapping cone of the $\Alg$-bimodule homomorphism
\[d^+:\M(\id) \to \mathsf{M}(\mathsf{X}_i)\]
defined as follows. 
\[d^+(\iota_{\mathbf{x}})=\begin{cases}
\begin{array}{ll}
x_{Z(\mathbf{x})}U_i-U_{i+1}x_{Z(\mathbf{x})}&\text{if}\ \mathbf{x}\cap\{i,i+1\}=\{i\}\\
x_{Z(\mathbf{x})}- \mathcal{R}_{i+1}x_{Z(\mathbf{y})} \mathcal{L}_{i+1}&\text{if}\ \mathbf{x}\cap\{i,i+1\}=\{i+1\}\\
- \mathcal{R}_{i+1}x_{Z(\mathbf{y})}\mathcal{L}_{i+1}&\text{if}\ \mathbf{x}\cap\{i,i+1\}=\{i,i+1\}
\end{array}
\end{cases}
\]
where, $Z(\mathbf{x})$ is the cycle where $b(Z(\mathbf{x}))=t(Z(\mathbf{x}))=\mathbf{x}$ and $\mathbf{y}=r_{i+1}(\mathbf{x})$. Note that if $\mathbf{y}$ is not defined, $x_{Z(\mathbf{y})}=0$. 

Note that we can split the algebra $\A$ along idempotents as \[ \mathcal{A} = \A \cdot \iota_{ i \setminus i+1} \oplus \A \cdot \iota_{\emptyset} \oplus \A \cdot \iota_{i+1} \]

\noindent
Let $N_{0} = \iota_{i \setminus i+1} + \iota_{\emptyset}$ and $S = \iota _{i+1}$ so that $\M(\id) = \A \cdot N_{0} \oplus \A \cdot S$. The generator $N_{0}$ has the same idempotent as $N_{+}$ and $N_{-}$, so that $\A \cdot N_{0} \cong \A \cdot N_{+} \cong \A \cdot N_{-}$.

We can give a description of the bimodule $\M(\sigma_i)$ as a left $\A$-module over the six generators $\{ N_{+},N_{-}, E, W, N_{0}, S \}$. The differential is given by 
\[ d^{+}(N_{0})=N_{-} \]
\[ d^{+}(S) = \mathcal{L}_{i} W- \mathcal{R}_{i+1} E   \]

\noindent
The relevant right multiplication maps are described in the following table. Note that for $j \ne i,i+1$, $\mathcal{R}_j$,  $\mathcal{L}_j$, and $U_j$ each commute with all six generators of $\M(\sigma_i)$ -- $\mathcal{R}_j$ and $ \mathcal{L}_j$ just change the idempotent away from $i,i+1$.

\begin{table}[!h]
\resizebox{\textwidth}{!}{%
\begin{tabular}{|l|l|l|l|l|l|l|} 
\hline 
 & $U_{i}$                        & $U_{i+1}$                        & $\L_{i}$            & $\L_{i+1}$              & $\R_{i}$               & $\R_{i+1}$              \\ \hline
$N_{+}$   & $N_{-}+U_{i+1}N_{+}$             & $-N_{-}+U_{i}N_{+}$            & $0$                & $E$                  & $W$                   & $0$                  \\ \hline
$N_{-}$   & $U_{i}N_{-}$                   & $U_{i+1}N_{-}$                   & $0$                & $\L_{i+1}\L_{i}W-U_{i+1}E$ & $-\R_{i}\R_{i+1}E+U_{i}W$ & $0$                  \\ \hline
$E$       & $\L_{i+1}\L_{i}W$                  & $(U_{i}+U_{i+1})E - \L_{i+1}\L_{i}W$ & $\L_{i+1}\L_{i}N_{+}$  & $0$                  & $0$                   & $U_{i}N_{+} - N_{-}$ \\ \hline
$W$       & $(U_{i}+U_{i+1})W - \R_{i}\R_{i+1}E$ & $\R_{i}\R_{i+1}E$                  & $N_{-}+U_{i+1}N_{+}$ & $0$                  & $0$                   & $\R_{i}\R_{i+1}N_{+}$    \\ \hline
$N_{0}$   & $U_{i}N_{0}$                   & $U_{i+1}N_{0}$                   & $0$                & $\L_{i+1}S$             & $\R_{i}S$              & $0$                  \\ \hline
$S$       & $U_{i}S$                       & $U_{i+1}S$                       & $\L_{i}N_{0}$       & $0$                  & $0$                   & $\R_{i+1}N_{0}$         \\ \hline
\end{tabular}}
\vspace{2mm}
\caption{The right multiplication maps for the six generators of $\M(\id) \oplus \M(\mathsf{X}_i)$.} \label{sixgentable}
\end{table}

\subsection{Our bimodule for a negative crossing}

Let $\sigma^{-1}_i$ be the elementary negative braid on $2n$ strands where the crossing is between the $i$ and $i+1$ strands. The complex of $\A$-bimodules $\M(\sigma_i^{-1})$ is given by the mapping cone 
\[ d^-: \M(\mathsf{X}_i) \to \M(\id) \]

\noindent
where $d^-$ is the map
\[
d^-(x_Z)=\begin{cases}
\begin{array}{lcl}
\iota_{t(Z)}&&\text{if}\ \{i+1\}\notin b(Z)\cup t(Z)\\
\mathcal{L}_i\iota_{t(Z)}&&\text{if}\ i+1\in b(Z)\ \text{and}\ i\in t(Z)\\
\mathcal{R}_i\iota_{t(Z)}&&\text{if}\ i\in b(Z)\ \text{and}\ i+1\in t(Z)\\
U_i \iota_{t(Z)}&&\text{if}\ \{i+1\}\subset b(Z)\cap t(Z).
\end{array}
\end{cases}
\]

As with the positive crossing, we will reinterpret this map in terms of the six generators $\{ N_{0}, S, N_{+},N_{-}, E, W \}$. The differential $d^-$ is given by
\[ 
\begin{split}
    & d^{-}(N_{+}) = N_{0} \\
    & d^{-}(N_{-}) = (U_{i} - U_{i+1})N_{0} \\
    & d^{-}(E) = \mathcal{L}_{i+1} S \\
    & d^{-}(W) = \mathcal{R}_i S \\
\end{split}
\]

\noindent
As a module, it is still given by $\M(\sigma_i) \oplus \M(\id)$, so the right multiplication is still described by Table \ref{sixgentable}.

\subsection{Our bimodule for the plat minimum}

Let $\vee(n)$ denote the plat minimum diagram consisting of $n$ cups. Each cup consists of two edges oriented out of a source -- suppose the edges are numbered $e_{1},...,e_{2n}$ so that $e_{i}$ lies on strand $i$. The bimodule $\M( \vee (n))$ is an $\A(0,0), \A(n,n)$-bimodule. 

The generators of $\M( \vee (n))$ correspond to $n$-component cycles $Z$ relative to the boundary of $\vee (n)$ union the bivalent vertices. For the $i$th minimum of $\vee(n)$, $Z$ contains either $e_{2i-1}$ or $e_{2i}$. There are $2^n$ such cycles.

The module $\M(\vee(n))$ is generated as a right module over $\A(n,n)$ by the $x_{Z}$ modulo the following relations:

\begin{enumerate}
    \item If $e_{2i-1} \in Z$ and $Z'=Z \setminus \{e_{2i-1} \} \cup \{e_{2i} \}$, then $x_{Z} \mathcal{R}_{2i-1} = x_{Z'}$.
    \item For all $i$ and $Z$, $X_{Z} R_{2i} =0$.
\end{enumerate}

\noindent
This bimodule is precisely the opposite bimodule to $\M(\wedge(n))$. In particular, it is also generated by the single generator $x_{Z_{odd}}$, where $Z_{odd}$ is the cycle which has the left edge at each minimum:

\[ Z_{odd} = \{e_1, e_3, ..., e_{2n-1} \} \]

\noindent
As in the plat maximum case, let $\mathbf{x}_{odd} = \{1,3,...,2n-1\}$.

\begin{lemma}
The module $\M(\vee(n))$ is isomorphic to $\iota_{\mathbf{x}_{odd}} \cdot \A(n,n)$.
\end{lemma}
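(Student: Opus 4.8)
The plan is to mirror the proof of the analogous statement for $\M(\wedge(n))$ almost verbatim, using the fact that $\M(\vee(n))$ is defined to be the opposite bimodule. First I would observe that since $\M(\vee(n))$ is generated as a right $\A(n,n)$-module by the single element $x_{Z_{odd}}$, whose right idempotent is $\iota_{\mathbf{x}_{odd}}$, there is a canonical surjection $\iota_{\mathbf{x}_{odd}} \cdot \A(n,n) \twoheadrightarrow \M(\vee(n))$ sending $\iota_{\mathbf{x}_{odd}}$ to $x_{Z_{odd}}$. The content of the lemma is that this map is injective, i.e. that the only relations imposed in the definition of $\M(\vee(n))$ are already satisfied in $\iota_{\mathbf{x}_{odd}} \cdot \A(n,n)$.

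The key step is to check that relation (2), $x_Z \cdot \mathcal{R}_{2i} = 0$, holds automatically in $\iota_{\mathbf{x}_{odd}} \cdot \A(n,n)$. Using relation (1) to write every generator $x_Z$ as $\iota_{\mathbf{x}_{odd}} \cdot (\text{product of }\mathcal{R}_{2j-1}\text{'s})$, it suffices to examine $\iota_{\mathbf{x}_{odd}} \cdot b \cdot \mathcal{R}_{2i}$ for the relevant algebra elements $b$; equivalently, for each idempotent state $\mathbf{x}'$ with $\mathbf{x}' \cap \{2i, 2i+1\} = \{2i+1\}$ one checks that $\phi^{\mathbf{x}_{odd}, \mathbf{x}'}(1) \cdot \mathcal{R}_{2i} = 0$. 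But multiplying on the right by $\mathcal{R}_{2i}$ moves the occupied slot from $2i+1$ to $2i$ (recall $\mathcal{R}_i = L_i$ under the dictionary of Section 3.1), producing an element whose right idempotent state $\mathbf{x}''$ has $2i \in \mathbf{x}''$, $2i+1 \notin \mathbf{x}''$; one then verifies that $\mathbf{x}_{odd}$ and $\mathbf{x}''$ fail to be ``close enough'' in the sense of Section 2.1, so the corresponding piece of $\A(n,n)$ is zero. This is exactly the idempotent/too-far argument used in the $\M(\wedge(n))$ lemma, transposed to the right action.

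The main obstacle — really just a bookkeeping point rather than a genuine difficulty — is keeping the conventions straight: one must remember that $\M(\vee(n))$ is a right module, that $\mathcal{R}_i$ corresponds to $L_i$ under the identification $\A \cong A$, and that the ``too far'' condition is the one controlling vanishing of $\iota_{\mathbf{x}} \cdot \A \cdot \iota_{\mathbf{y}}$. Given the explicit statement that $\M(\vee(n))$ is the opposite bimodule of $\M(\wedge(n))$, the cleanest writeup is to cite the $\M(\wedge(n))$ lemma and the opposite-bimodule identification directly, so I would phrase the proof as: ``$\M(\vee(n))$ is the opposite of $\M(\wedge(n)) \cong \A(n,n) \cdot \iota_{\mathbf{x}_{odd}}$; taking opposites and using that $\A(n,n)$ is isomorphic to its opposite algebra via the identification swapping $\mathcal{L}_i \leftrightarrow \mathcal{R}_i$, we get $\M(\vee(n)) \cong \iota_{\mathbf{x}_{odd}} \cdot \A(n,n)$,'' and then include the short direct check of relation (2) as confirmation.
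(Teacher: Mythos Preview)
Your proposal is correct and matches the paper's approach exactly: the paper's proof reads in its entirety ``the proof is the same as in the plat maximum case, and we leave it to the reader.'' One bookkeeping slip to fix before you write it up: in $\A$ the element $\mathcal{R}_{2i}$ has \emph{left} idempotent containing $2i$ (not $2i+1$), so the relevant condition is $\mathbf{x}'\cap\{2i,2i+1\}=\{2i\}$ and right multiplication by $\mathcal{R}_{2i}$ moves the occupied slot from $2i$ to $2i+1$; with that correction your ``too far'' argument (equivalently, the vanishing $\mathcal{R}_{2i+1}\mathcal{R}_{2i}=0$ after commuting the $\mathcal{R}_{2j-1}$ factors) goes through verbatim.
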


\begin{proof}
The proof is the same as in the plat maximum case, and we leave it to the reader.
\end{proof}

\subsection{Relationship with Khovanov homology}

Let $b$ be a braid diagram on $2n$ strands with all strands oriented downwards. If $D$ has $m$ crossings, we can write $D$ as a product of elementary braids
\[ b = \prod_{i=1}^{m} \sigma_{j(i)} \]
with $j(i) \in \{1,-1,2,-2,...,n-1,-n+1\}$ and $\sigma_{-j}=\sigma_{j}^{-1}$.

Let $D=\mathsf{p}(b)$ be the plat closure of $D$. 

\begin{definition}

The module $\M(D)$ is given by tensor product
\[
\M(D) = \M(\vee(n)) \otimes_{\A} \M(\sigma_{j(1)}) \otimes_{\A} \M(\sigma_{j(2)})  \otimes_{\A} \cdot \cdot \cdot \otimes_{\A} \M(\sigma_{j(m)}) \otimes_{\A} \M(\wedge(n))
\]
where $\A=\A(n,n)$.
\end{definition}

Viewing $D$ as an oriented graph with $m$ 4-valent vertexes, $n$ bivalent sources, and $n$ bivalent sinks, let $e_{1},...,e_{2n+2m}$ denote the edges of $D$. We will define an action of $\mathbb{Q}[\mathcal{U}_1,...,\mathcal{U}_{2m+2n}]$ on $\M(D)$ as follows.

Between any two adjacent crossings, we can slice the diagram $D$ into two tangles $D=T_{1} \ast T_{2}$ where $T_{1}$ is a $(0,2n)$ tangle and $T_{2}$ is a $(2n,0)$ tangle and 
\[\M(T_{1}) \otimes_{\A} \M(T_2) = \M(D) \]

\noindent
Let $e_{k}$ denote the edge in $D$ corresponding to the $i$th strand where $D$ is sliced into $T_1$ and $T_2$. We define the action of $\mathcal{U}_k$ on $\M(D)$ by 
\[ \mathcal{U}_k (a \otimes b) = aU_i \otimes b = a \otimes U_i b \]

Let $w_{1}^+,...,w_{n}^+$ denote the bivalent maxima of $D$ (the sinks) ordered from left to right, and let $w_{1}^-,...,w_{n}^-$ denote the bivalent minima.

\begin{definition}

For each pair $w_{k}^+, w_k^-$, define \[L_{w_k^{\pm}} = U_{i_1(k)} + U_{i_2(k)} - U_{j_1(k)} - U_{j_2(k)} \quad \quad L'_{w_k^{\pm}} = U_{i_1(k)} + U_{i_2(k)} + U_{j_1(k)} + U_{j_2(k)},\]
where $e_{i_1(k)},e_{i_2(k)}$ are the edges adjacent to $w_{i}^-$ and $e_{j_1(k)},e_{j_2(k)}$ are the edges adjacent to $w_{i}^+$.

\end{definition}

Let $\mathsf{K}(D)$ denote the Koszul complex
\[ \mathsf{K}(D)=
\big{(}
\xymatrix{R\ar@<1ex>[r]^{L_{w_1^\pm}}&R\ar@<1ex>[l]^{L'_{w_1^{\pm}}}}\big{)}\otimes\big{(}
\xymatrix{R\ar@<1ex>[r]^{L_{w_2^\pm}}&R\ar@<1ex>[l]^{L'_{w_2^{\pm}}}}\big{)}\otimes...\otimes\big{(}
\xymatrix{R\ar@<1ex>[r]^{L_{w_n^\pm}}&R\ar@<1ex>[l]^{L'_{w_n^{\pm}}}}\big{)}\]

\begin{definition}

The complex $C_{1 \pm 1}(D)$ is defined to be the tensor product
\[C_{1 \pm 1}(D) = \M(D) \otimes \mathsf{K}(D). \]

\end{definition}

\begin{theorem}[\cite{alishahi2018link}]

Let $E_{k}(D)$ denote the spectral sequence induced by the cube filtration on $C_{1 \pm 1}(D)$. Then $E_{2}(D) \cong Kh(D)$, and the total homology $E_{\infty}(D)$ is a link invariant.

\end{theorem}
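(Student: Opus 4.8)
The plan is to analyze the spectral sequence $E_k(D)$ one page at a time, matching $E_1$ together with its differential $d_1$ to the Khovanov--Rozansky $\mathfrak{sl}_2$ cube-of-resolutions complex, and then to prove homotopy invariance of the total complex under plat moves. First I would describe the associated graded of the cube filtration. Each crossing bimodule is a mapping cone, $\M(\sigma_i) = (\M(\id) \xrightarrow{d^+} \M(\mathsf{X}_i))$ and $\M(\sigma_i^{-1}) = (\M(\mathsf{X}_i) \xrightarrow{d^-} \M(\id))$, so after tensoring all the $\M(\sigma_{j(\ell)})$ with $\M(\vee(n))$ and $\M(\wedge(n))$ over $\A$, the complex $\M(D)$ carries a filtration indexed by vertices $v$ of the cube $\{0,1\}^m$ (the filtration degree being the number of singularized crossings), with associated graded $\bigoplus_v \M(D_v)$, where $D_v$ is the oriented MOY graph obtained by taking the oriented smoothing or the singularization at each crossing according to $v$. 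Tensoring with the Koszul complex $\mathsf{K}(D)$ respects this filtration, so $E_0(D) = \bigoplus_v \M(D_v)\otimes\mathsf{K}(D)$ with internal differentials, and therefore $E_1(D) = \bigoplus_v H_*\!\left(\M(D_v)\otimes \mathsf{K}(D)\right)$.

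The technical heart is to identify $H_*(\M(D_v)\otimes \mathsf{K}(D))$, with all of its gradings, with the $\mathfrak{sl}_2$ state space (the Khovanov--Rozansky matrix-factorization cohomology) of the closed MOY graph $D_v$. Since $\M$ is built by gluing the local bimodules $\M(\id)$, $\M(\mathsf{X}_i)$, $\M(\vee(n))$, $\M(\wedge(n))$ over $\A$, and $\mathsf{K}(D)$ factors over the cups and caps of the plat, this reduces to two points: (i) that the local bimodules agree with the corresponding local matrix factorizations, which one checks against the explicit generator-and-relation descriptions (Lemma~\ref{freesingular} and the cup/cap lemmas) together with the MOY~II relation $\M(\mathsf{X}_i)\otimes_\A\M(\mathsf{X}_i)\cong \M(\mathsf{X}_i)\{1\}\oplus \M(\mathsf{X}_i)\{-1\}$ already recorded around Lemma~\ref{lem:rightu}; and (ii) that tensoring with the curved Koszul complex $\mathsf{K}(D)$ cancels the curvature $\omega$ of $\M(D_v)$ and leaves exactly the MOY state space. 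I expect this local-to-global bookkeeping --- tracking the potential $\omega$, the quantum grading, and the homological grading simultaneously --- to be the main obstacle. With this in hand, the induced differential $d_1$ is computed from the edge maps $d^{\pm}$, which on homology become the standard ``zip'' and ``unzip'' chain maps between MOY state spaces; hence $(E_1(D), d_1)$ is exactly the Khovanov--Rozansky $\mathfrak{sl}_2$ cube complex of $D$. Therefore $E_2(D) = H_*(E_1(D), d_1)$ is KR $\mathfrak{sl}_2$ homology, which is isomorphic to $Kh(D)$ by \cite{hughes2014note, KhovanovRozansky08:MatrixFactorizations}.

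Finally, for the statement that $E_\infty(D)$ is a link invariant, I would show that every Reidemeister move and every plat/stabilization move relating two plat diagrams of the same link $L$ induces a filtered chain homotopy equivalence of the total complexes $C_{1\pm1}$, compatible with the cube filtration up to an overall shift determined by the change in writhe. Such an equivalence induces isomorphisms on all pages $E_k$ for $k\ge 2$ (the shift only relabels filtration levels), hence on $E_\infty$. The braid-like moves R2 and R3 follow from the MOY~II and MOY~III relations among the $\M(\mathsf{X}_i)$, combined with the mapping-cone descriptions of $\M(\sigma_i^{\pm1})$, by Gaussian elimination of acyclic summands; the R1 move and the moves involving the cups and caps are the delicate cases, handled by writing down the explicit contracting homotopies on the small local bimodules and invoking the cup/cap lemmas. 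Since $E_2(D)\cong Kh(D)$ is already manifestly a link invariant by the classical invariance of Khovanov homology, the only genuinely new content in this last part is the invariance of the higher pages and of the limit $E_\infty(D)$.
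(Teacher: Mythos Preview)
The present paper does not contain a proof of this theorem: it is stated twice (once in the introduction and once in Section~3) purely as background, with the proof deferred entirely to the cited reference \cite{alishahi2018link}. There is therefore no ``paper's own proof'' to compare your proposal against.

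That said, your outline is the expected shape of such an argument and is broadly consistent with how cube-of-resolutions spectral sequences of this type are handled: identify the associated graded with $\bigoplus_v \M(D_v)\otimes\mathsf{K}(D)$, compute $E_1$ as the MOY/KR $\mathfrak{sl}_2$ state spaces via the local bimodule descriptions and the Koszul cancellation of the curvature, identify $d_1$ with the zip/unzip edge maps so that $E_2\cong Kh$, and then establish invariance of $E_\infty$ by producing filtered homotopy equivalences for the Reidemeister and plat moves. One caution: your claim that a filtered homotopy equivalence ``up to an overall shift'' automatically induces isomorphisms on all pages $E_k$ for $k\ge 2$ needs care --- what one actually needs is that the maps realizing the Reidemeister moves shift the cube filtration by a bounded amount and induce the known Khovanov invariance isomorphism on $E_2$; the standard comparison theorem then gives isomorphisms on all later pages. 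Whether this matches the details in \cite{alishahi2018link} cannot be judged from the present paper.
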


\section{Homotopy equivalences between the two types of bimodules}

In the previous two sections, we described two bimodules that could be ascribed to a plat tangle $T$, where a plat tangle is defined to be a horizontal slice of the plat closure of a braid: the Ozsv\'{a}th-Szab\'{o} $DA$ bimodule $\mathsf{OS}_{DA}(T)$ and our differential bimodule $\M(T)$. Our bimodule can also be viewed as a $DA$ bimodule $\M_{DA}(T)$ with  $\delta^{1}_{1}$ given by the differential, $\delta^{1}_{2}$ given by right multiplication, and $\delta^{1}_{i}=0$ for $i \ge 3$. Moreover, the $DA$ bimodules $\mathsf{OS}_{DA}(T)$ and $\M_{DA}(T)$ are defined over isomorphic algebras, where the isomorphism
\[ f: \A(n,k) \to A(n,2n-k) \]

\noindent
is given by taking the complement of each idempotent:
\[
\begin{split}
   & f(\iota_{\mathbf{x}}) = \mathbf{I_{\overline{x}}} \\
   & f(\mathcal{R}_{i}) = L_i \\
   & f(\mathcal{L}_{i}) = R_i \\
   & f(U_i) = U_{i}
\end{split}
\]

\begin{remark}

In our plat diagrams, we require the minima (resp. maxima) to have the same vertical coordinate, so that a plat tangle $T$ either includes all of the minima (resp. maxima) or none of them.

\end{remark}

\begin{theorem} \label{thm4.2}

The $DA$ bimodules $\mathsf{OS}_{DA}(T)$ and $\M_{DA}(m(T))$ are homotopy equivalent, where $m(T)$ is the mirror of $T$.

\end{theorem}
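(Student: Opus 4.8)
The plan is to reduce Theorem \ref{thm4.2} to the three building blocks that generate all plat tangles — the elementary crossings $\sigma_i^{\pm 1}$, the plat maximum $\wedge(n)$, and the plat minimum $\vee(n)$ — and then to invoke functoriality of the box tensor product under homotopy equivalence. Concretely, since any plat tangle $T$ decomposes (up to isotopy, and respecting the convention in the preceding Remark that maxima and minima come in a single slice) as a vertical composition $T = \vee(n) \ast b \ast \wedge(n)$ with $b$ an open braid, and $b$ itself factors as a product of elementary braids $\prod_i \sigma_{j(i)}$, it suffices to prove the homotopy equivalences on each elementary piece and then box-tensor. The mirror $m(T)$ reverses the word and negates each crossing, which is exactly the statement $m(b) = \overline{b}$ and $m(\wedge(n)) = \vee(n)$, $m(\vee(n)) = \wedge(n)$, so the three elementary statements we need are precisely Theorem \ref{crossinghomotopy} (for $\sigma_i^{\pm 1}$) and the isomorphisms $\M_A(\vee(n)) \cong \mathsf{OS}_A(\vee(n))$, $\M_D(\wedge(n)) \cong \mathsf{OS}_D(\wedge(n))$ stated in the plat-closure theorem. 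Thus the logical skeleton is: (i) establish the elementary-piece equivalences; (ii) show the box tensor product of $DA$ bimodules preserves homotopy equivalence; (iii) assemble.

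The heart of step (i), for the crossings, is the reduction to $DD$ bimodules described in the Remark following Figure \ref{SixGeneratorsHD}. I would first verify that $\M(\sigma_i)$ and $\M(\sigma_i^{-1})$ really are free as left modules over the relevant idempotent subalgebras with basis $\{N_+, N_-, E, W, N_0, S\}$ — this follows from Lemma \ref{freesingular}, the splitting $\M(\id) = \A \cdot N_0 \oplus \A \cdot S$, and the fact that the crossing bimodule is a mapping cone of these two — so that $\M(\sigma_i)$ genuinely defines a $DA$ bimodule $\M_{DA}(\sigma_i)$ with $\delta^1_1$ the differential and $\delta^1_2$ the right-multiplication from Table \ref{sixgentable}. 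Next I would box-tensor $\M_{DA}(\sigma_i)$ with the canonical $DD$ bimodule $\mathsf{OS}_{DD}(\id)$ (using the algebra isomorphism $f$ to make sense of this over $A - A'$), obtaining an explicit $DD$ bimodule with twelve-ish generators, and similarly expand $\mathsf{OS}_{DD}(\sigma_i^{-1})$ — which has eight generators $N, S, E, W$ (times idempotent pairs). The map $d^+$ sends $N_0 \xrightarrow{\sim} N_-$, so after box-tensoring this becomes a $\delta^1$ arrow $1 \otimes 1 : N_0 \to N_-$ in the $DD$ bimodule; I would apply the homological perturbation lemma (cancellation lemma for $DD$ structures) along this arrow, killing the pair $N_0, N_-$, and check that the induced differentials on the surviving generators $\{N_+, S, E, W\}$ match, term by term, the differential in the Ozsváth–Szabó $DD$ bimodule for a negative crossing drawn in the diagram after the positive-crossing one. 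Then, because $\mathsf{OS}_{DD}(\id)$ is quasi-invertible (the cited lemma), box-tensoring with its quasi-inverse recovers the $DA$-level equivalence $\M_{DA}(\sigma_i) \simeq \mathsf{OS}_{DA}(\sigma_i^{-1})$. The negative-crossing case is symmetric: $d^-$ gives $N_+ \to N_0$, cancel, and match against $\mathsf{OS}_{DD}(\sigma_i)$.

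For the plat maximum and minimum in step (i), the work is purely algebraic and in fact gives honest isomorphisms rather than mere homotopy equivalences: by the lemma identifying $\M(\wedge(n)) \cong \A(n,n) \cdot \iota_{\mathbf{x}_{odd}}$ and the Ozsváth–Szabó lemma identifying $\mathsf{OS}_{DA}(\wedge(n)) \cong A(n,n) \cdot \mathbf{I}_{\mathbf{x}_{even}}$, together with the fact that under $f$ the complementary idempotent $\iota_{\mathbf{x}_{odd}} = \mathbf{I}_{\overline{\mathbf{x}_{odd}}} = \mathbf{I}_{\mathbf{x}_{even}}$, the two type $D$ modules are literally the same cyclic module over identified algebras; the compatibility of the curvatures was already checked in the Conventions. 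The minimum is the opposite module, handled the same way. Finally, step (ii) — that $\boxtimes$ over $A$ descends to the homotopy category of $DA$ bimodules that are bounded (here, projective over the idempotent ring, which all our pieces are by the freeness lemmas) — is standard bordered algebra bookkeeping (it is in the references \cite{ozsvath2017bordered,ozsvath2018kauffman}), so I would cite it rather than reprove it.

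\textbf{Main obstacle.} The delicate part is step (i) for the crossings: carrying out homological perturbation on the box-tensored $DD$ bimodule and verifying that \emph{every} arrow — including the ones with $U$-coefficients and the composite arrows $S \to W \to N$, $N \to E \to S$, etc., that get created by cancellation — reproduces exactly the Ozsváth–Szabó differential with the correct signs and the correct $L', R'$ versus $L, R$ generators. Getting the signs right is where the $f(i,\mathbf{x})$ twist between $R_i, L_i$ and $R'_i, L'_i$ (Section on alternate generators) has to be threaded through carefully, and the right-multiplication Table \ref{sixgentable} must be translated into $\delta^1_2$ actions and then fed through $\boxtimes$ consistently. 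I expect essentially all the genuine content of the proof to live in this sign- and coefficient-matching computation; everything else is formal.
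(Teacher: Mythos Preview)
Your strategy is exactly the paper's: reduce to the elementary pieces, pass to the $DD$ side via $\boxtimes\,\mathsf{OS}_{DD}(\id)$, cancel the $N_0\leftrightarrow N_{\mp}$ arrow by homological perturbation, and compare. Two corrections worth flagging. First, the mirror $m(T)$ only switches crossing signs; it does \emph{not} swap $\wedge(n)$ with $\vee(n)$, so $m(\wedge(n))=\wedge(n)$ and $m(\vee(n))=\vee(n)$, which is why the paper proves $\mathsf{OS}_D(\wedge(n))\cong\M_D(\wedge(n))$ and $\mathsf{OS}_A(\vee(n))\cong\M_A(\vee(n))$ rather than a cross-comparison. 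Second, and more substantively, after you cancel $N_0\to N_-$ the resulting four-generator $DD$ bimodule does \emph{not} match $\mathsf{OS}_{DD}(\sigma_i^{-1})$ term by term: the self-loops differ (e.g.\ $-(U_i+U_{i+1})\otimes E_i$ on $W$ versus $-U_i\otimes E_{i+1}-U_{i+1}\otimes E_i$), there are extra $E\leftrightarrow W$ arrows with $E_i-E_{i+1}$ coefficients, and there is an $N_+\to S$ arrow absent from the Ozsv\'ath--Szab\'o side. The paper closes this gap with an explicit change of basis, e.g.\ $\mathbf{E}\mapsto E+R_{i+1}\otimes(E_i-E_{i+1})\cdot S$ and $\mathbf{W}\mapsto W+L_i\otimes(E_i-E_{i+1})\cdot S$, so your ``match term by term'' expectation should be replaced by ``find an explicit $DD$ isomorphism''. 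With these adjustments your outline is the paper's proof.
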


\noindent
This theorem will be proved by showing that it is true for $T=\sigma_{i}$, $T=\sigma_{i}^{-1}$, $T = \wedge(n)$, and $T=\vee(n)$. The result then follows from the fact the in both cases, gluing corresponds to box tensor product.

\subsection{The plat maximum and minimum}

We start with the easiest cases: the plat maximum $\wedge(n)$ and the plat minimum $\vee(n)$.

\begin{theorem} \label{thm4.3}

There are isomorphisms of type D and type A modules
\[
{}^{A}\mathsf{OS}_{D}(\wedge(n)) \cong {}^{A}\mathsf{M}_{D}(\wedge(n))
\]
\[
\mathsf{OS}_{A}(\vee(n))_{A} \cong \mathsf{M}_{A}(\vee(n))_{A}
\]
induced by the algebra isomorphism $f$.

\end{theorem}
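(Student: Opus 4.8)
The plan is to deduce the theorem directly from the four structural lemmas already proved above: the identifications $\mathsf{OS}_{D}(\wedge(n)) \cong A\cdot\mathbf{I}_{\mathbf{x}_{even}}$, $\mathsf{OS}_{A}(\vee(n)) \cong \mathbf{I}_{\mathbf{x}_{even}}\cdot A$, $\M(\wedge(n)) \cong \A\cdot\iota_{\mathbf{x}_{odd}}$, and $\M(\vee(n)) \cong \iota_{\mathbf{x}_{odd}}\cdot\A$, with $A=A(n,n)$ and $\A=\A(n,n)$. First I would observe that the complement of the idempotent state $\mathbf{x}_{odd}=\{1,3,\dots,2n-1\}$ inside $\{1,\dots,2n\}$ is exactly $\mathbf{x}_{even}=\{2,4,\dots,2n\}$, so the algebra isomorphism $f$, which sends $\iota_{\mathbf{x}}\mapsto\mathbf{I}_{\overline{\mathbf{x}}}$, carries $\iota_{\mathbf{x}_{odd}}$ to $\mathbf{I}_{\mathbf{x}_{even}}$. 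Since $f$ is an isomorphism of algebras, it induces an equivalence between left (resp.\ right) $\A$-modules and left (resp.\ right) $A$-modules sending $\A\cdot\iota_{\mathbf{x}_{odd}}$ to $A\cdot\mathbf{I}_{\mathbf{x}_{even}}$ and $\iota_{\mathbf{x}_{odd}}\cdot\A$ to $\mathbf{I}_{\mathbf{x}_{even}}\cdot A$. Composing with the four lemmas yields module isomorphisms $\M_{D}(\wedge(n))\cong\mathsf{OS}_{D}(\wedge(n))$ and $\M_{A}(\vee(n))\cong\mathsf{OS}_{A}(\vee(n))$, each manifestly induced by $f$.

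It then remains to upgrade these to isomorphisms of type D, resp.\ type A, structures, which is where the genuine (if small) effort lies. On our side, $\wedge(n)$ and $\vee(n)$ contain no crossings, so $\M(\wedge(n))$ and $\M(\vee(n))$ carry no differential; hence $\M_{D}(\wedge(n))$ has $\delta^{1}=0$, while $\M_{A}(\vee(n))$ has $m_{1}=0$, $m_{2}$ equal to right multiplication, and $m_{k}=0$ for $k\ge 3$ by the definition of the one-sided analogue of $\M_{DA}$. On the Ozsv\'{a}th--Szab\'{o} side, the single generator $\mathbf{Z}$ surviving in the box tensor product defining $\mathsf{OS}_{D}(\wedge(n))$ has $\delta^{1}(\mathbf{Z})=0$, and the computation establishing $\mathsf{OS}_{A}(\vee(n)) \cong \mathbf{I}_{\mathbf{x}_{even}}\cdot A$ shows that once the relevant box tensors collapse, the only surviving $\mathcal{A}_{\infty}$-operation is $m_{2}$, again given by right multiplication. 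Thus on both sides every higher structure map vanishes and the remaining operation is ordinary module multiplication, which $f$ intertwines by construction; the module isomorphisms above are therefore automatically isomorphisms of type D and type A structures.

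I expect the main obstacle to be this last point in the plat minimum case: one must confirm that no higher $\mathcal{A}_{\infty}$-operation survives on $\mathsf{OS}_{A}(\vee(n))$, and that the sign conventions and the passage from $\M(\wedge(n))$ to its opposite $\M(\vee(n))$ (and likewise on the Ozsv\'{a}th--Szab\'{o} side) remain compatible with $f$. Both of these are essentially settled inside the proofs of the two lemmas identifying the plat minimum modules with idempotent corners of the respective algebras; granting them, the theorem follows by transport of structure along $f$.
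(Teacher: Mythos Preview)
Your proposal is correct and follows essentially the same approach as the paper: the paper's proof is a single sentence invoking the identifications of the four modules with the idempotent corners $A\cdot\mathbf{I}_{\mathbf{x}_{even}}$, $\mathbf{I}_{\mathbf{x}_{even}}\cdot A$, $\A\cdot\iota_{\mathbf{x}_{odd}}$, $\iota_{\mathbf{x}_{odd}}\cdot\A$, together with $f(\iota_{\mathbf{x}_{odd}})=\mathbf{I}_{\mathbf{x}_{even}}$. Your version simply spells out the details the paper leaves implicit, including the verification that the type~D and type~A structure maps are trivial beyond ordinary multiplication.
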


\begin{proof}

This follows from the description of these modules in terms of $\mathbf{I}_{\mathbf{x}_{even}}$ and $\iota_{\mathbf{x}_{odd}}$, respectively, together with the fact that $f(\iota_{\mathbf{x}_{odd}})=\mathbf{I}_{\mathbf{x}_{even}}$.

\end{proof}

\subsection{The positive crossing $\sigma_i$}
By definition $\mathsf{M}(\sigma_i)$ is the mapping cone of the $\A \text{--}\A$-bimodule homomorphism
\[d^+:\M(\id) \to \mathsf{M}(\mathsf{X}_i) . \]

By Lemma \ref{freesingular}, the differential bimodule $\M(\sigma_{i})$ is a free left module generated by $\{N_{+}, N_{-}, E, W, N_{0}, S \}$ over the corresponding idempotent subalgebras of $\A$. Thus, it can be viewed as a $DA$ bimodule over $\A$ with these $6$ generators, where $\delta^{1}_{1}$ is the differential, $\delta^{1}_{2}$ is right multiplication, and there are no higher maps.

With respect to this basis, $\delta^{1}_{1}$ is given by 
\[ \delta^{1}_{1}(N_{0}) = N_{-} \]
\[ \delta^{1}_{1}(S) =\L_{i} \otimes W- \R_{i+1} \otimes E  . \]

\noindent
and the right multiplication $\delta^1_2$ is given by Table \ref{sixgentable}.

Recall that $\A$ is isomorphic to $A$ under the map $f$. Thus, we can define the $DA$ bimodule for $\sigma_i$ over $A$ by
\[ \delta^{1}_{1}(N_{0}) = N_{-} \]
\[ \delta^{1}_{1}(S) = R_{i} \otimes W-L_{i+1} \otimes E . \]

\noindent
and $\delta^{1}_{2}$ given by Table \ref{TableoverA}.

\begin{table}[!h]
\resizebox{\textwidth}{!}{%
\begin{tabular}{|l|l|l|l|l|l|l|} 
\hline 
 & $U_{i}$                        & $U_{i+1}$                        & $R_{i}$            & $R_{i+1}$              & $L_{i}$               & $L_{i+1}$              \\ \hline
$N_{+}$   & $N_{-}+U_{i+1}N_{+}$             & $-N_{-}+U_{i}N_{+}$            & $0$                & $E$                  & $W$                   & $0$                  \\ \hline
$N_{-}$   & $U_{i}N_{-}$                   & $U_{i+1}N_{-}$                   & $0$                & $R_{i+1}R_{i}W-U_{i+1}E$ & $-L_{i}L_{i+1}E+U_{i}W$ & $0$                  \\ \hline
$E$       & $R_{i+1}R_{i}W$                  & $(U_{i}+U_{i+1})E - R_{i+1}R_{i}W$ & $R_{i+1}R_{i}N_{+}$  & $0$                  & $0$                   & $U_{i}N_{+} - N_{-}$ \\ \hline
$W$       & $(U_{i}+U_{i+1})W - L_{i}L_{i+1}E$ & $L_{i}L_{i+1}E$                  & $N_{-}+U_{i+1}N_{+}$ & $0$                  & $0$                   & $L_{i}L_{i+1}N_{+}$    \\ \hline
$N_{0}$   & $U_{i}N_{0}$                   & $U_{i+1}N_{0}$                   & $0$                & $R_{i+1}S$             & $L_{i}S$              & $0$                  \\ \hline
$S$       & $U_{i}S$                       & $U_{i+1}S$                       & $R_{i}N_{0}$       & $0$                  & $0$                   & $L_{i+1}N_{0}$         \\ \hline
\end{tabular}}
\caption{} \label{TableoverA}
\end{table}

\begin{definition}
Let ${}^{A}\M_{DA}(\sigma_{i})_{A}$ denote this $DA$ bimodule.
\end{definition}

\begin{theorem} \label{thm4.5} There is a homotopy equivalence of $DA$ bimodules
\[ {}^{A}\M_{DA}(\sigma_{i})_{A} \simeq {}^{A} \mathsf{OS}_{DA}(\sigma^{-1}_i)_{A} .\] 

\end{theorem}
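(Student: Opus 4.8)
The plan is to follow the strategy already announced in the introduction and the remark preceding this theorem: rather than work directly with the $DA$ bimodules, I will box-tensor both sides with the canonical $DD$-bimodule ${}^{A,A'}\mathsf{OS}_{DD}(\id)$, which is quasi-invertible (so a homotopy equivalence of $DD$-bimodules pulls back to one of $DA$ bimodules), and then run homological perturbation. So the first step is to compute the $DD$-bimodule ${}^{A,A'}\M_{DD}(\sigma_i) := {}^{A}\M_{DA}(\sigma_i)_A \boxtimes {}^{A,A'}\mathsf{OS}_{DD}(\id)$. Concretely, the generators become the six families $N_+, N_-, N_0, E, W, S$ each tensored with the pairs of complementary idempotents from $\mathbf{K}$; the differential $\delta^1$ picks up (i) the internal differential $\delta^1_1$ of $\M_{DA}(\sigma_i)$, namely $N_0 \mapsto N_-$ and $S \mapsto R_i\otimes W - L_{i+1}\otimes E$ (rewritten over $A$ via $f$), and (ii) the terms coming from $\delta^1_2$ applied to the algebra element $a = \sum_i(R_i\otimes L_i' + L_i\otimes R_i') - \sum_i U_i\otimes E_i$ of the identity $DD$-bimodule. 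The $\delta^1_2$ contributions are read off directly from Table~\ref{TableoverA}: for each generator $g$ and each algebra generator $b \in \{U_j, R_j, L_j\}$ one substitutes $\delta^1_2(g,b)$ and pairs it with the corresponding $A'$-factor from $a$.

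The second step is the homological-perturbation reduction. The internal arrow $N_0 \to N_-$ is an isomorphism onto one copy of $\mathbf{N}$ (both have idempotent $\iota_{i}$, hence $\mathbf{I}_{\overline{\mathbf{x}}}$ with $i+1\in\mathbf{x}$), so it is an acyclic piece; I will apply the standard cancellation/perturbation lemma for $DD$-bimodules to delete the pair $(N_0, N_-)$. This introduces zig-zag corrections: for any incoming arrow into $N_0$ and outgoing arrow out of $N_-$, we add the composite through the cancelled isomorphism. The key computation is to track all arrows into $N_0$ (these come from $d^-$-type terms, e.g. from $W$ via $L_i$ and from $E$ via $R_{i+1}$, per the $N_0$ row/column of the table) and all arrows out of $N_-$ (e.g. $N_- \mapsto U_i\cdot(\cdots)$, $N_-\mapsto R_{i+1}R_iW - U_{i+1}E$, etc.), and to verify that after adding the zig-zags and deleting the pair, the resulting $DD$-bimodule on generators $N_+, S, E, W$ has exactly the arrows and coefficients listed in the diagram defining ${}^{A,A'}\mathsf{OS}_{DD}(\sigma_i^{-1})$ (the negative-crossing $DD$-bimodule), together with the universal $R_j\otimes L_j + L_j\otimes R_j$ and $-U_j\otimes E_{\tau(j)}$ terms for $j\ne i,i+1$ which are untouched by the cancellation. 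At this point I identify $N_+ \leftrightarrow \mathbf{N}$, $S\leftrightarrow\mathbf{S}$, $E\leftrightarrow\mathbf{E}$, $W\leftrightarrow\mathbf{W}$.

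The third and final step is to invoke quasi-invertibility of ${}^{A,A'}\mathsf{OS}_{DD}(\id)$ (the lemma of Ozsváth--Szabó quoted in the excerpt): since ${}^{A,A'}\M_{DD}(\sigma_i) \simeq {}^{A,A'}\mathsf{OS}_{DD}(\sigma_i^{-1})$ and the latter is by definition ${}^{A}\mathsf{OS}_{DA}(\sigma_i^{-1})_A \boxtimes {}^{A,A'}\mathsf{OS}_{DD}(\id)$, box-tensoring the homotopy equivalence with a quasi-inverse of ${}^{A,A'}\mathsf{OS}_{DD}(\id)$ yields ${}^{A}\M_{DA}(\sigma_i)_A \simeq {}^{A}\mathsf{OS}_{DA}(\sigma_i^{-1})_A$, as claimed. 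I should also note a small bookkeeping point: the isomorphism $f$ interchanges $R\leftrightarrow L$ and complements idempotents, and in passing to the $A'$-side one uses the primed generators $R_i', L_i'$ (which anti-commute for indices differing by $\ge 2$), so the sign conventions in the $\delta^1_2$-induced terms must be matched against the signs in the Ozsváth--Szabó negative-crossing diagram; getting all of these signs and the $\tau_i$-twist right is the main obstacle, and it is essentially the one nontrivial part of the argument. Everything else is the mechanical expansion of Table~\ref{TableoverA} against $a$ and a single application of the perturbation lemma.
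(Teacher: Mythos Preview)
Your overall strategy matches the paper's exactly: box-tensor with ${}^{A,A'}\mathsf{OS}_{DD}(\id)$, cancel the pair $(N_0,N_-)$ via homological perturbation, and invoke quasi-invertibility. However, your claim that the cancelled bimodule on $\{N_+,E,W,S\}$ agrees on the nose with ${}^{A,A'}\mathsf{OS}_{DD}(\sigma_i^{-1})$ under the naive identification $N_+\leftrightarrow\mathbf{N}$, $E\leftrightarrow\mathbf{E}$, $W\leftrightarrow\mathbf{W}$, $S\leftrightarrow\mathbf{S}$ is not correct, and this is where the actual content of the proof lies.

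After cancellation, the resulting $DD$-bimodule carries several terms that are \emph{not} present in the Ozsv\'ath--Szab\'o bimodule: a direct arrow $N_+\to S$ with coefficient $(R_{i+1}\otimes L'_{i+1}+L_i\otimes R'_i)(1\otimes(E_{i+1}-E_i))$ (coming from the zig-zag through the cancelled pair), arrows $E\leftrightarrow W$ with coefficients $R_{i+1}R_i\otimes(E_{i+1}-E_i)$ and $L_iL_{i+1}\otimes(E_i-E_{i+1})$, and self-loops on $E$ and $W$ equal to $-(U_i+U_{i+1})\otimes E_{i+1}$ and $-(U_i+U_{i+1})\otimes E_i$ respectively, whereas in $\mathsf{OS}_{DD}(\sigma_i^{-1})$ every generator has self-loop $-U_i\otimes E_{i+1}-U_{i+1}\otimes E_i$. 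These discrepancies cannot be absorbed by signs or by the $\tau_i$-twist alone. The paper resolves this with a nontrivial change of basis that mixes $E$ and $W$ into $S$ via exterior algebra elements:
\[
\mathbf{E}\mapsto E + R_{i+1}\otimes(E_i-E_{i+1})\cdot S,\qquad
\mathbf{W}\mapsto W + L_i\otimes(E_i-E_{i+1})\cdot S,
\]
with $\mathbf{N}\mapsto N_+$ and $\mathbf{S}\mapsto S$. Verifying that this is a $DD$-isomorphism is the genuine computation you are skipping. (A smaller point: for the cancellation of $N_0\xrightarrow{1\otimes 1}N_-$ the relevant zig-zags use arrows \emph{into} $N_-$ and arrows \emph{out of} $N_0$; your description of ``arrows into $N_0$ from $W$ via $L_i$'' etc.\ is the picture for the negative crossing, not this one.)
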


\begin{proof}

Recall that Ozsv\'{a}th and Szab\'{o} also define a $DD$-bimodule $\mathsf{OS}_{DD}(\sigma^{-1}_i)$ to a negative crossing which is related to $\mathsf{OS}_{DA}(\sigma^{-1}_i)$ by box tensor product with the identity $DD$-bimodule:
\[{}^{A, A'} \mathsf{OS}_{DD}(\sigma_i^{-1}) \cong {}^{A} \mathsf{OS}_{DA}(\sigma_i^{-1})_{A} \hspace{1.5mm} \boxtimes_{A} \hspace{0mm} {}^{A, A'} \mathsf{OS}_{DD}(\id) \]

Since ${}^{A, A'} \mathsf{OS}_{DD}(\id)$ is quasi-invertible, $\M_{DA}(\sigma_{i})$ is homotopy equivalent to $\mathsf{OS}_{DA}(\sigma_i^{-1})$ if and only if ${}^{A}\M_{DA}(\sigma_{i})_{A}  \boxtimes_{A} {}^{A, A'} \mathsf{OS}_{DD}(\id) $ is homotopy equivalent to ${}^{A, A'} \mathsf{OS}_{DA}(\sigma_i^{-1}) \boxtimes_{A} {}^{A, A'} \mathsf{OS}_{DD}(\id)$. Thus it is sufficient to show that 
\[ {}^{A}\M_{DA}(\sigma_{i})_A \boxtimes_{A} {}^{A, A'}\mathsf{OS}_{DD}(\id) \simeq {}^{A, A'} \mathsf{OS}_{DD}(\sigma_i^{-1}) .\]
The reason for tensoring with the identity $DD$-bimodule is that homological perturbation is simpler on the $DD$ side.

Recall that the differential on  ${}^{A, A'} \mathsf{OS}_{DD}(\id)$ is given by $\delta(x)=ax$, where 

\[  a = \sum_{i=1}^{2n} (L_{i} \otimes R'_{i} + R_{i} \otimes L'_{i}) - \sum_{i=1}^{2n} U_{i} \otimes E_{i} \in A \otimes A'. \]
The $DD$-bimodule \[{}^{A}\M_{DA}(\sigma_{i})_A\boxtimes {}^{A, A'} \mathsf{OS}_{DD}(\id) \]
is depicted as a table in Table \ref{deltapositive2}.

\begin{table}[!ht] 
\resizebox{\textwidth}{!}{%
{\renewcommand{\arraystretch}{2}%
\begin{tabular}{|l|l|l|l|l|l|l|}
\hline
\textbf{} & $N_+$                                          & $N_{-}$                                                & $E$                                                  & $W$                                                      & $N_0$                                           & $S$                                               \\ \hline
$N_+$   & \makecell{$-U_{i} \otimes E_{i+1}$ \\ $- U_{i+1} \otimes E_{i}$} & $1 \otimes (E_{i+1}-E_{i})$         & $1 \otimes L'_{i+1}$ &$1 \otimes R'_{i}$ & $0$                                               & $0$                                               \\ \hline
$N_{-}$   & $0$                      & \makecell{$-U_{i} \otimes E_{i}$ \\ $ - U_{i+1} \otimes E_{i+1}$}       & \makecell{$-U_{i+1} \otimes L'_{i+1} $\\ $-L_{i}L_{i+1}\otimes R'_{i}$}                               &  \makecell{$R_{i+1}R_{i}\otimes L'_{i+1} $ \\ $ + U_{i}\otimes R'_{i}$}                                         & $0$                                     & $0$                                               \\ \hline
$E$       &  \makecell{$R_{i+1}R_{i} \otimes L'_{i}$ \\ $ +U_{i} \otimes R'_{i+1}$}                            & $-1 \otimes R'_{i+1}$  & $-(U_{i}+U_{i+1}) \otimes E_{i+1}$                    & $R_{i+1}R_{i} \otimes (E_{i+1}-E_{i})$               & $0$                                               & $0$                               \\ \hline
$W$       & \makecell{ $U_{i+1} \otimes L'_{i} $\\ $ + L_{i}L_{i+1} \otimes R'_{i+1}$}                               & $1 \otimes L'_{i}$&     $L_{i}L_{i+1} \otimes (E_{i} - E_{i+1})$           & $-(U_{i}+U_{i+1}) \otimes E_{i}$                         & $0$                                               & $0$                                \\ \hline
$N_0$   & $0$                                              & $1\otimes 1$                                                    & $0$                                                  & $0$                                                      & \makecell{$-U_{i} \otimes E_{i} $\\ $ - U_{i+1} \otimes E_{i+1}$}  & \makecell{$-L_{i} \otimes R'_{i}$ \\ $ - R_{i+1} \otimes L'_{i+1}$} \\ \hline
$S$       & $0$                                              & $0$                                                    & $-L_{i+1}\otimes 1$                                                  & $R_i\otimes 1$                                                      & \makecell{$-L_{i+1} \otimes R'_{i+1} $ \\ $ -R_{i} \otimes L'_{i}$} & \makecell{$-U_{i} \otimes E_{i}$ \\ $- U_{i+1} \otimes E_{i+1}$}  \\ \hline
\end{tabular}}}
\vspace{3mm}
\caption{Table of entries for $\delta$ on ${}^{A}\M_{DA}(\sigma_{i})_A\boxtimes_{A} {}^{A, A'} \mathsf{OS}_{DD}(\id)$} \label{deltapositive2}
\end{table}

\begin{figure}
    \centering
\begin{tikzpicture}[scale=2.8]
    \node at (0,4) (N) {$N_{+}$} ;
    \node at (-2,2.5) (W) {$W$} ;
    \node at (2,2.5) (E) {$E$} ;
    \node at (0,0) (S) {$S$} ;
    \draw[->] (S) [bend left=10] to node[below,sloped] {\tiny{$R_{i} \otimes 1$}}  (W)  ;
    \draw[->] (W) [bend left=10] to node[above,sloped] {\tiny{$R_{i+1} \otimes L'_{i} L'_{i+1} + L_{i} \otimes U_{i}$}}  (S)  ;
    \draw[->] (E) [bend right=10] to node[above,sloped] {\tiny{$-R_{i+1} \otimes U_{i+1} - L_{i} \otimes R'_{i+1}R'_{i}$}}  (S)  ;
    \draw[->] (S)[bend right=10] to node[below,sloped] {\tiny{$-L_{i+1} \otimes 1$}} (E) ;
    \draw[->] (W) to node[below,sloped] {\tiny{$U_{i+1} \otimes L'_{i}+ L_{i}L_{i+1} \otimes R'_{i+1}$}} (N) ;
    \draw[->] (N)[bend right=20] to node[above,sloped] {\tiny{$1 \otimes R'_i$}} (W) ;
    \draw[->] (E) to node[below,sloped]{\tiny{$R_{i+1}R_i \otimes L'_i + U_i\otimes R'_{i+1}$}} (N) ;
    \draw[->] (N)[bend left=20] to node[above,sloped]{\tiny{$1 \otimes L'_{i+1}$}} (E) ;
    \draw[->] (N) [loop above, looseness=10] to node[above]{\tiny{$-U_i\otimes E_{i+1} - U_{i+1}\otimes E_{i}$}} (N);
     \draw[->] (S) [loop below, looseness=10] to node[below]{\tiny{$-U_i\otimes E_{i} - U_{i+1}\otimes E_{i+1}$}} (S);
    \draw[->] (W) [loop left, looseness=10] to node[above,sloped]{\tiny{$-(U_{i}+U_{i+1}) \otimes E_{i}$}} (W);
    \draw[->] (E) [loop right, looseness=10] to node[above,sloped]{\tiny{$-(U_i + U_{i+1}) \otimes E_{i+1}$}} (E);
    \draw[->] (E) [bend right=5] to node[above,pos=.3] {\tiny{$R_{i+1} R_i \otimes (E_{i+1} - E_i)$}} (W) ;
    \draw[->] (W) [bend right=5] to node[below,pos=.3] {\tiny{$L_{i}L_{i+1} \otimes (E_{i}-E_{i+1})$}} (E) ;
    \draw[->] (N) to node[above,sloped,pos=.66] {\tiny{$(R_{i+1} \otimes L'_{i+1} + L_{i} \otimes R'_{i})(1\otimes(E_{i+1}-E_{i}))$}} (S) ;
    \end{tikzpicture}

\caption{The $DD$ bimodule for ${}^{A}\M_{DA}(\sigma_{i})_A\boxtimes {}^{A, A'} \mathsf{OS}_{DD}(\id)$ after cancelation.} \label{PositiveCanceledDD}

\end{figure}
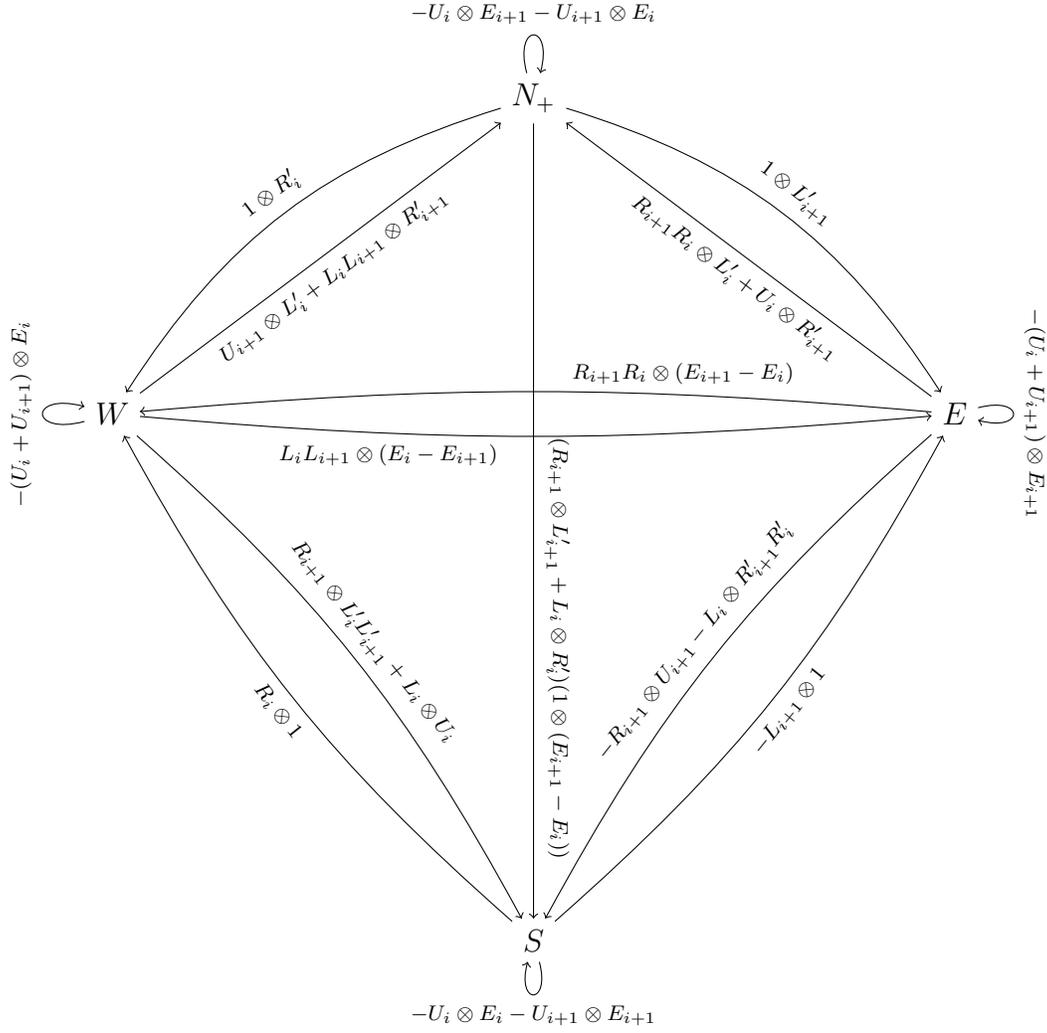

\begin{figure}
  \label{eq:NegativeCrossing}
\begin{tikzpicture}[scale=2.4]
    \node at (0,3) (N) {$\n$} ;
    \node at (-2,2) (W) {$\w$} ;
    \node at (2,2) (E) {$\e$} ;
    \node at (0,1) (S) {$\s$} ;
    \draw[->] (W) [bend left=7] to node[above,sloped] {\tiny{${U_{i+1}}\otimes{L_i}'+{L_{i} L_{i+1}}\otimes{R_{i+1}'}$}}  (N)  ;
    \draw[->] (N) [bend left=7] to node[below,sloped] {\tiny{${1}\otimes{R_{i}'}$}}  (W)  ;
    \draw[->] (N)[bend right=7] to node[below,sloped] {\tiny{${1}\otimes{L_{i+1}'}$}}  (E)  ;
    \draw[->] (E)[bend right=7] to node[above,sloped] {\tiny{${U_i}\otimes{R_{i+1}'} + {R_{i+1} R_{i}}\otimes{L_i}'$}} (N) ;
    \draw[->] (S)[bend right=7] to node[above,sloped] {\tiny{$-{R_i}\otimes{1}$}} (W) ;
    \draw[->] (W)[bend right=7] to node[below,sloped] {\tiny{$-{L_i}\otimes{U_{i+1}} - {R_{i+1}}\otimes{L_{i}' L_{i+1}'}$}} (S) ;
    \draw[->] (S)[bend left=7] to node[above,sloped]{\tiny{${L_{i+1}}\otimes{1}$}} (E) ;
    \draw[->] (E)[bend left=7] to node[below,sloped]{\tiny{${R_{i+1}}\otimes{U_{i}} + {L_i}\otimes{R_{i+1}' R_{i}'}$}} (S) ;
  \end{tikzpicture}
  
 \caption{The $DD$ bimodule ${}^{A, A'} \mathsf{OS}_{DD}(\sigma^{-1}_i)$. Each generator also has an arrow to itself with coefficient $-U_i\otimes E_{i+1} - U_{i+1}\otimes E_{i}$.} \label{OSDDNegative}
\end{figure}

Applying homological perturbation to the map $N_0 \mapsto (1 \otimes 1) \cdot N_{-}$ yields a $DD$ bimodule on the four generators $N_+, E, W, S$. The resulting differential is described in Figure \ref{PositiveCanceledDD}. Recall that the bimodule ${}^{A, A'} \mathsf{OS}_{DD}(\sigma_i)$ is given by Figure \ref{OSDDNegative}. Then the map given by 
\[
\begin{split}
   & \n  \mapsto N_{+} \\
&    \e  \mapsto E + R_{i+1} \otimes (E_{i} - E_{i+1}) \cdot S \\
&    \w  \mapsto W + L_{i} \otimes (E_{i}-E_{i+1}) \cdot S \\
&    \s  \mapsto S
\end{split}
\]
is an isomorphism of $DD$-bimodules, proving the theorem.

\end{proof}

\subsection{The negative crossing $\sigma_i^{-1}$} Let $\mathcal{A}=\mathcal{A}(n,k)$. The differential bimodule $\mathsf{M}(\sigma_i^{-1})$ is the mapping cone of the $\mathcal{A}-\mathcal{A}$-bimodule homomorphism 
\[d^-:\mathsf{M}(\mathsf{X}_i)\to\mathsf{M}(\mathrm{id}).\]

Similar to the positive crossing, the $\mathcal{A}-\mathcal{A}$-bimodule $\mathsf{M}(\sigma_i^{-1})$ can be viewed as a $DA$ bimodule over $\mathcal{A}$ with basis $\{N_+,N_-,E,W,N_0,S\}$. The differential is $\delta^1_1$, which with respect to this basis is given by 
\[\begin{split}
    & \delta^1_1(N_{+}) = N_{0} \\
    & \delta^1_1(N_{-}) = (U_{i} - U_{i+1})N_{0} \\
    & \delta_1^1(E) = \mathcal{L}_{i+1} S \\
    & \delta_1^1(W) = \mathcal{R}_i S. \\
\end{split}
\]
The map $\delta^1_2$ describes the right multiplication, which is given by Table \ref{sixgentable}. There are no higher maps.

As in the positive crossing case, under the isomorphism $f$ between $\mathcal{A}$ and $A$ we can view $\mathsf{M}(\sigma_i^-)$ as a DA bimodule over $A$ with $\delta^1_1$ given by 

\[\begin{split}
    & \delta^1_1(N_{+}) = N_{0} \\
    & \delta^1_1(N_{-}) = (U_{i} - U_{i+1})N_{0} \\
    & \delta_1^1(E) = R_{i+1} S \\
    & \delta_1^1(W) = L_i S. \\
\end{split}
\]
and $\delta^{1}_{2}$ given by Table \ref{TableoverA}.

\begin{definition}
Let $\lsup{A}\mathsf{M}_{DA}(\sigma_i^{-1})_{A}$ denote this $DA$ bimodule.
\end{definition}

\begin{theorem} \label{thm4.6} There is a homotopy equivalence of $DA$ bimodules
\[\lsup{A}\mathsf{M}_{DA}(\sigma_i^{-1})_A\simeq\lsup{A}\mathsf{OS}_{DA}(\sigma_i^{-1})_A\] 
\end{theorem}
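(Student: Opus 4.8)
The plan mirrors the proof of Theorem~\ref{thm4.5}, interchanging the roles played there by $N_{0}$ and $N_{+}$. Since the identity $DD$-bimodule ${}^{A,A'}\mathsf{OS}_{DD}(\id)$ is quasi-invertible, it suffices to establish a homotopy equivalence of $DD$-bimodules
\[ {}^{A}\M_{DA}(\sigma_{i}^{-1})_{A}\boxtimes_{A}{}^{A,A'}\mathsf{OS}_{DD}(\id)\ \simeq\ {}^{A,A'}\mathsf{OS}_{DD}(\sigma_{i}), \]
the right-hand side being the Ozsv\'ath--Szab\'o $DD$-bimodule of the mirror crossing, as predicted by Theorem~\ref{crossinghomotopy}. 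To form the left-hand side I would box-tensor using the maps $\delta^{1}_{1}(N_{+})=N_{0}$, $\delta^{1}_{1}(N_{-})=(U_{i}-U_{i+1})N_{0}$, $\delta^{1}_{1}(E)=R_{i+1}S$, $\delta^{1}_{1}(W)=L_{i}S$, the right multiplication of Table~\ref{TableoverA}, and left multiplication by $a=\sum_{i}(L_{i}\otimes R'_{i}+R_{i}\otimes L'_{i})-\sum_{i}U_{i}\otimes E_{i}$ on the identity $DD$-bimodule. This yields a $DD$-bimodule on the six generators $N_{+},N_{-},E,W,N_{0},S$ whose differential I would tabulate exactly as in Table~\ref{deltapositive2}.

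Next I would run homological perturbation. The term $\delta^{1}_{1}(N_{+})=N_{0}$ gives an isomorphism arrow $N_{+}\mapsto(1\otimes1)\cdot N_{0}$; cancelling this pair leaves the four generators $N_{-},E,W,S$, matching the surviving generators described in the introduction. The induced differential is obtained from the standard zig-zag formula, and the only delicate contributions are those factoring through the cancelled pair: the composite using the arrow $N_{-}\mapsto((U_{i}-U_{i+1})\otimes1)\cdot N_{0}$ coming from $\delta^{1}_{1}(N_{-})$, the self-loop at $N_{0}$, and all arrows out of $N_{+}$, which mix the $a$-action with the first row of Table~\ref{TableoverA}.

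Finally I would identify the resulting four-generator $DD$-bimodule with ${}^{A,A'}\mathsf{OS}_{DD}(\sigma_{i})$. By analogy with the proof of Theorem~\ref{thm4.5}, I expect the intertwining isomorphism to be the identity on $N_{-}$ and $S$ and to correct $E$ and $W$ by adding multiples of $S$ with coefficients of the shape $(\,\cdot\,)\otimes(E_{i}-E_{i+1})$; verifying that this change of basis carries the perturbed differential onto the Ozsv\'ath--Szab\'o diagram for the positive crossing is the bulk of the computation and the step I expect to be the main obstacle, since every off-diagonal term and every $U\otimes E$ loop term must be matched simultaneously. Once the $DD$-bimodule isomorphism is verified, quasi-invertibility of ${}^{A,A'}\mathsf{OS}_{DD}(\id)$ upgrades it to the asserted homotopy equivalence of $DA$-bimodules.
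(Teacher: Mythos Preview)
Your approach is essentially identical to the paper's: tensor with ${}^{A,A'}\mathsf{OS}_{DD}(\id)$, tabulate the resulting six-generator $DD$-bimodule, cancel the arrow $N_{+}\mapsto(1\otimes1)\cdot N_{0}$, and then exhibit an explicit isomorphism with ${}^{A,A'}\mathsf{OS}_{DD}(\sigma_{i})$. You also correctly identify that the target should be the $DD$-bimodule for the \emph{positive} crossing, consistent with Theorem~\ref{crossinghomotopy}.

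One small correction to your expectation for the final change of basis: the paper fixes $N_{-}$, $E$, and $W$, and instead corrects $\mathbf{S}$ by
\[
\mathbf{S}\ \longmapsto\ S - L_{i+1}\otimes(E_{i+1}-E_{i})\cdot E - R_{i}\otimes(E_{i}-E_{i+1})\cdot W.
\]
The reason is that after cancelling $N_{+}\to N_{0}$, the extra $(E_{i+1}-E_{i})$-arrow that must be removed runs from $S$ up to $N_{-}$ (dual to the situation in Theorem~\ref{thm4.5}, where it ran from $N_{+}$ down to $S$), so the natural place to absorb it is at the source $S$. Also, the six-generator table is not literally Table~\ref{deltapositive2}: the $\delta^{1}_{1}$-columns differ, since now $N_{+},N_{-},E,W$ map into $N_{0},S$ rather than the other way around.
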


\begin{proof}
As in the positive crossing case, we describe the explicit homological perturbation on the DD side. Specifically, we will show that 
\[\lsup{A,A'}\mathsf{OS}_{DD}(\sigma_i^{-1})\cong \lsup{A}\mathsf{OS}_{DA}(\sigma_i^{-1})_A\DT\lsup{A,A'}\mathsf{OS}_{DD}(\mathrm{id})\simeq \lsup{A}\mathsf{M}_{DA}(\sigma_i^{-1})_A\DT\lsup{A,A'}\mathsf{OS}_{DD}(\mathrm{id}),\]
and the theorem follows from the quasi-invertibility of $\lsup{A,A'}\mathsf{OS}_{DD}(\mathrm{id})$. The $DD$ bimodule 
\[\lsup{A}\mathsf{M}_{DA}(\sigma_i^{-1})_A\DT\lsup{A,A'}\mathsf{OS}_{DD}(\mathrm{id})\]
is given in Table \ref{deltanegative}.

\begin{table}[!ht] 
\resizebox{\textwidth}{!}{%
{\renewcommand{\arraystretch}{2}%
\begin{tabular}{|l|l|l|l|l|l|l|}
\hline
\textbf{} & $N_+$                                          & $N_{-}$                                                & $E$                                                  & $W$                                                      & $N_0$                                           & $S$                                               \\ \hline
$N_+$   & \makecell{$-U_{i} \otimes E_{i+1}$ \\ $- U_{i+1} \otimes E_{i}$} & $1 \otimes (E_{i+1}-E_{i})$         & $1 \otimes L'_{i+1}$ &$1 \otimes R'_{i}$ & $1\otimes 1$                                               & $0$                                               \\ \hline
$N_{-}$   & $0$                      & \makecell{$-U_{i} \otimes E_{i}$ \\ $ - U_{i+1} \otimes E_{i+1}$}       & \makecell{$-U_{i+1} \otimes L'_{i+1} $\\ $-L_{i}L_{i+1}\otimes R'_{i}$}                               &  \makecell{$R_{i+1}R_{i}\otimes L'_{i+1} $ \\ $ + U_{i}\otimes R'_{i}$}                                         & $(U_i-U_{i+1})\otimes 1$                                     & $0$                                               \\ \hline
$E$       &  \makecell{$R_{i+1}R_{i} \otimes L'_{i}$ \\ $ +U_{i} \otimes R'_{i+1}$}                            & $-1 \otimes R'_{i+1}$  & $-(U_{i}+U_{i+1}) \otimes E_{i+1}$                    & $R_{i+1}R_{i} \otimes (E_{i+1}-E_{i})$               & $0$                                               & $R_{i+1}\otimes 1$                               \\ \hline
$W$       & \makecell{ $U_{i+1} \otimes L'_{i} $\\ $ + L_{i}L_{i+1} \otimes R'_{i+1}$}                               & $1 \otimes L'_{i}$&     $L_{i}L_{i+1} \otimes (E_{i} - E_{i+1})$           & $-(U_{i}+U_{i+1}) \otimes E_{i}$                         & $0$                                               & $L_i\otimes 1$                                \\ \hline
$N_0$   & $0$                                              & $0$                                                    & $0$                                                  & $0$                                                      & \makecell{$-U_{i} \otimes E_{i} $\\ $ - U_{i+1} \otimes E_{i+1}$}  & \makecell{$-L_{i} \otimes R'_{i}$ \\ $ -R_{i+1} \otimes L'_{i+1}$} \\ \hline
$S$       & $0$                                              & $0$                                                    & $0$                                                  & $0$                                                      & \makecell{$-L_{i+1} \otimes R'_{i+1} $ \\ $ -R_{i} \otimes L'_{i}$} & \makecell{$-U_{i} \otimes E_{i}$ \\ $- U_{i+1} \otimes E_{i+1}$}  \\ \hline
\end{tabular}}}
\vspace{3mm}
\caption{Table of entries for $\delta$ on ${}^{A}\M_{DA}(\sigma_{i}^{-1})_A\boxtimes_{A} {}^{A, A'} \mathsf{OS}_{DD}(\id)$} \label{deltanegative}
\end{table}

Next, we apply homological perturbation to $N_+\to (1\otimes 1) \cdot N_0$ and get a $DD$ bimodule on four generators $N_-, E, W, S$. The resulting $\delta$ is given in Figure \ref{NegativeCanceledDD}. 

The following map gives the isomorphism with the $DD$ bimodule $\lsup{A,A'}\mathsf{OS}_{DD}(\sigma_i)$ given in Figure \ref{OSDDPositive}.  
\[
\begin{split}
&\n \to N_{-}\\
&\e \to E\\
&\w \to W\\
&\s \to S-L_{i+1}\otimes (E_{i+1}-E_{i})E-R_i\otimes (E_{i}-E_{i+1})W
\end{split}
\]
\end{proof}

\begin{figure}
    \centering
\begin{tikzpicture}[scale=2.8]
    \node at (0,4) (N) {$N_{-}$} ;
    \node at (-2,2.5) (W) {$W$} ;
    \node at (2,2.5) (E) {$E$} ;
    \node at (0,0) (S) {$S$} ;
    \draw[->] (S) [bend left=10] to node[below,sloped] {\tiny{$L_{i+1}\otimes R_{i+1}'R_{i}'+R_i\otimes U_i$}}  (W)  ;
    \draw[->] (W) [bend left=10] to node[above,sloped] {\tiny{$L_i\otimes 1$}}  (S)  ;
    \draw[->] (E) [bend right=10] to node[above,sloped] {\tiny{$R_{i+1}\otimes 1$}}  (S)  ;
    \draw[->] (S)[bend right=10] to node[below,sloped] {\tiny{$L_{i+1} \otimes U_{i+1}+R_i\otimes L_i'L_{i+1}'$}} (E) ;
    \draw[->] (W) to node[below,sloped] {\tiny{$1\otimes L_i'$}} (N) ;
    \draw[->] (N)[bend right=20] to node[above,sloped] {\tiny{$R_{i+1}R_i\otimes L_{i+1}'+U_{i+1}\otimes R_i'$}} (W) ;
    \draw[->] (E) to node[below,sloped]{\tiny{$-1\otimes R_{i+1}'$}} (N) ;
    \draw[->] (N)[bend left=20] to node[above,sloped]{\tiny{$-L_iL_{i+1}\otimes R_i'-U_i\otimes L_{i+1}'$}} (E) ;
    \draw[->] (N) [loop above, looseness=10] to node[above]{\tiny{$-U_i\otimes E_{i+1} - U_{i+1}\otimes E_{i}$}} (N);
        \draw[->] (S) [loop below, looseness=10] to node[below]{\tiny{$-U_i\otimes E_{i} -U_{i+1}\otimes E_{i+1}$}} (S);
    \draw[->] (W) [loop left, looseness=10] to node[above,sloped]{\tiny{$-(U_{i}+U_{i+1}) \otimes E_{i}$}} (W);
    \draw[->] (E) [loop right, looseness=10] to node[above,sloped]{\tiny{$-(U_i + U_i+1) \otimes E_{i+1}$}} (E);
    \draw[->] (E) [bend right=5] to node[above,pos=.3] {\tiny{$R_{i+1} R_i \otimes (E_{i+1} - E_i)$}} (W) ;
    \draw[->] (W) [bend right=5] to node[below,pos=.27] {\tiny{$L_{i}L_{i+1} \otimes (E_{i}-E_{i+1})$}} (E) ;
    \draw[->] (S) to node[above,sloped,pos=.33] {\tiny{$(R_i\otimes L_i'+L_{i+1}\otimes R_{i+1}')(1\otimes (E_{i+1}-E_{i}))$}} (N) ;
    \end{tikzpicture}

\caption{The $DD$ bimodule for ${}^{\A,\A}\M_{DA}(\sigma_{i})\boxtimes {}^{\A, \A'} \mathsf{OS}_{DD}(\id)$ after cancelation.} \label{NegativeCanceledDD}

\end{figure}

\begin{figure}
  \label{eq:NegativeCrossing}
\begin{tikzpicture}[scale=2.2]
    \node at (0,3) (N) {$N$} ;
    \node at (-2,2) (W) {$W$} ;
    \node at (2,2) (E) {$E$} ;
    \node at (0,1) (S) {$S$} ;
    \draw[->] (W) [bend right=7] to node[below,sloped] {\tiny{$1\otimes L_i'$}}  (N)  ;
    \draw[->] (N) [bend right=7] to node[above,sloped] {\tiny{$U_{i+1}\otimes R_i'+R_{i+1}R_i\otimes L_{i+1}'$}}  (W)  ;
    \draw[->] (N)[bend left=7] to node[above,sloped] {\tiny{$U_i\otimes L_{i+1}'+L_iL_{i+1}\otimes R_i'$}}  (E)  ;
    \draw[->] (E)[bend left=7] to node[below,sloped] {\tiny{$1\otimes R_{i+1}'$}} (N) ;
    \draw[->] (S)[bend left=7] to node[below,sloped] {\tiny{$-R_i\otimes U_{i+1}-L_{i+1}\otimes R_{i+1}'R_i'$}} (W) ;
    \draw[->] (W)[bend left=7] to node[above,sloped] {\tiny{$-L_i\otimes 1$}} (S) ;
    \draw[->] (S)[bend right=7] to node[below,sloped]{\tiny{$L_{i+1}\otimes U_i+R_i\otimes L_i'L_{i+1}'$}} (E) ;
    \draw[->] (E)[bend right=7] to node[above,sloped]{\tiny{$R_{i+1}\otimes 1$}} (S) ;
  \end{tikzpicture}
  
 \vspace{-2mm} 
 \caption{The $DD$ bimodule ${}^{A, A'} \mathsf{OS}_{DD}(\sigma_i)$. Each generator also has an arrow to itself with coefficient $-U_i\otimes E_{i+1} - U_{i+1}\otimes E_{i}$.} \label{OSDDPositive}.
\end{figure}


\begin{proof}[Proof of Theorem \ref{thm4.2}]

Theorem \ref{thm4.2} now follows from Theorems \ref{thm4.3}, \ref{thm4.5}, and \ref{thm4.6}, together with the fact that for both $DA$ bimodules, gluing corresponds to box-tensor product.

\end{proof}

%%%%%%%%%%%%%%%%%%%%%%%%%%%%%%%

\bibliographystyle{hamsalpha}
\bibliography{heegaardfloer}

\providecommand{\bysame}{\leavevmode\hbox to3em{\hrulefill}\thinspace}
\providecommand{\href}[2]{#2}
\providecommand{\eprint}{\begingroup \urlstyle{rm}\Url}
\begin{thebibliography}{Dow18b}

\bibitem[AD18]{alishahi2018link}
Akram Alishahi and Nathan Dowlin, \emph{A link invariant related to khovanov
  homology and knot floer homology}, arXiv preprint arXiv:1810.13406 (2018).

\bibitem[AE16]{alishahi2016tangle}
Akram~S Alishahi and Eaman Eftekhary, \emph{Tangle floer homology and
  cobordisms between tangles}, arXiv preprint arXiv:1610.07122 (2016).

\bibitem[AJ08]{akaho2008immersed}
Manabu Akaho and Dominic Joyce, \emph{Immersed lagrangian floer theory}, arXiv
  preprint arXiv:0803.0717 (2008).

\bibitem[AVD18]{alfieri2018introduction}
Antonio Alfieri and Jackson Van~Dyke, \emph{An introduction to knot floer
  homology and curved bordered algebras}, arXiv preprint arXiv:1811.07348
  (2018).

\bibitem[BN05]{BarNatan05:Kh-tangle-cob}
Dror Bar-Natan, \emph{Khovanov's homology for tangles and cobordisms}, Geom.
  Topol. \textbf{9} (2005), 1443--1499.

\bibitem[Cap08]{caprau2008sl}
Carmen~Livia Caprau, \emph{sl (2) tangle homology with a parameter and singular
  cobordisms}, Algebraic \& Geometric Topology \textbf{8} (2008), no.~2,
  729--756.

\bibitem[Dow18a]{dowlin2018family}
Nathan Dowlin, \emph{{A family of sln-like invariants in knot Floer homology}},
  Preprint arXiv:1804.03165 (2018).

\bibitem[Dow18b]{dowlin2018knot}
Nathan Dowlin, \emph{Knot floer homology and khovanov--rozansky homology for
  singular links}, Algebraic \& Geometric Topology \textbf{18} (2018), no.~7,
  3839--3885.

\bibitem[Dow18c]{dowlin2018spectral}
\bysame, \emph{A spectral sequence from khovanov homology to knot floer
  homology}, arXiv preprint arXiv:1811.07848 (2018).

\bibitem[Hug14]{hughes2014note}
Mark~C Hughes, \emph{A note on khovanov--rozansky sl2-homology and ordinary
  khovanov homology}, Journal of Knot Theory and its Ramifications \textbf{23}
  (2014), no.~12, 1450057.

\bibitem[Kho02]{Khovanov02:Tangles}
Mikhail Khovanov, \emph{A functor-valued invariant of tangles}, Algebr. Geom.
  Topol. \textbf{2} (2002), 665--741.

\bibitem[KR08]{KhovanovRozansky08:MatrixFactorizations}
Mikhail Khovanov and Lev Rozansky, \emph{Matrix factorizations and link
  homology}, Fund. Math. \textbf{199} (2008), no.~1, 1--91.

\bibitem[LP09]{lauda2009open}
Aaron~D Lauda and Hendryk Pfeiffer, \emph{Open-closed tqfts extend khovanov
  homology from links to tangles}, Journal of Knot Theory and Its Ramifications
  \textbf{18} (2009), no.~01, 87--150.

\bibitem[Man19]{manion2019singular}
Andrew Manion, \emph{Singular crossings and
  ozsv$\backslash$'ath-szab$\backslash$'o's kauffman-states functor}, arXiv
  preprint arXiv:1904.10983 (2019).

\bibitem[OS17]{ozsvath2017bordered}
Peter~S Ozsvath and Zoltan Szabo, \emph{Bordered knot algebras with matchings},
  arXiv preprint arXiv:1707.00597 (2017).

\bibitem[OS18]{ozsvath2018kauffman}
Peter Ozsv{\'a}th and Zolt{\'a}n Szab{\'o}, \emph{Kauffman states, bordered
  algebras, and a bigraded knot invariant}, Advances in Mathematics
  \textbf{328} (2018), 1088--1198.

\bibitem[O{\relax Sz}04]{OS04:HolomorphicDisks}
Peter~S. Ozsv{\'a}th and Zolt{\'a}n {\relax Sz}ab{\'o}, \emph{Holomorphic disks
  and topological invariants for closed three-manifolds}, Ann. of Math. (2)
  \textbf{159} (2004), no.~3, 1027--1158, \eprint{arXiv:math.SG/0101206}.

\bibitem[PV16]{petkova2016combinatorial}
Ina Petkova and Vera V{\'e}rtesi, \emph{Combinatorial tangle floer homology},
  Geometry \& Topology \textbf{20} (2016), no.~6, 3219--3332.

\bibitem[Ras03]{Rasmussen03:Knots}
Jacob Rasmussen, \emph{Floer homology and knot complements}, Ph.D. thesis,
  Harvard University, Cambridge, MA, 2003, \eprint{arXiv:math.GT/0306378}.

\bibitem[Rob13a]{Roberts:KhA}
Lawrence~P. Roberts, \emph{A type {A} structure in {K}hovanov homology}, 2013,
  \eprint{arXiv:1304.0465}.

\bibitem[Rob13b]{Roberts:KhD}
\bysame, \emph{A type {D} structure in {K}hovanov homology}, 2013,
  \eprint{arXiv:1304.0463}.

\bibitem[Zib16]{zibrowius2016heegaard}
Claudius Zibrowius, \emph{On a heegaard floer theory for tangles}, arXiv
  preprint arXiv:1610.07494 (2016).

\end{thebibliography}

\end{document}